\DeclareMathOperator{\A}{\mathsf A}
\DeclareMathOperator{\dgA}{\mathcal A}
\DeclareMathOperator{\T}{\mathsf T}
\DeclareMathOperator{\dgT}{\mathcal T}
\DeclareMathOperator{\F}{\mathsf F}
\DeclareMathOperator{\p}{\mathsf p\!}
\DeclareMathOperator{\dgF}{\mathcal F}
\DeclareMathOperator{\M}{\mathsf M}
\DeclareMathOperator{\Db}{\mathsf D^{\mathsf b}}
\DeclareMathOperator{\D1}{\mathsf D_\text{$1$}}
\DeclareMathOperator{\Dn}{\mathsf D_\text{$n$}}
\DeclareMathOperator{\Kb}{\mathsf K^{\mathsf b}}
\DeclareMathOperator{\K1}{\mathsf K_\text{$1$}}
\DeclareMathOperator{\Khp}{\mathsf K_\text{$1$}^{\hp}}
\DeclareMathOperator{\Kn}{\mathsf K_\text{$n$}}
\DeclareMathOperator{\Cb}{\mathsf C^{\mathsf b}}
\DeclareMathOperator{\C1}{\mathsf C_\text{$1$}}
\DeclareMathOperator{\Cn}{\mathsf C_\text{$n$}}
\DeclareMathOperator{\colim}{\mathsf{colim}}
\DeclareMathOperator{\limit}{\mathsf{lim}}
\DeclareMathOperator{\per}{\mathsf{per}}
\DeclareMathOperator{\Hom}{\mathsf{Hom}}
\DeclareMathOperator{\RHom}{\mathbf {R} \mathsf{Hom}}
\DeclareMathOperator{\End}{\mathsf{End}}
\DeclareMathOperator{\Ext}{\mathsf{Ext}}
\DeclareMathOperator{\thick}{\mathsf{thick}}
\DeclareMathOperator{\Ind}{\mathsf{Ind}}
\DeclareMathOperator{\Res}{\mathsf{Res}}
\DeclareMathOperator{\modules}{\mathsf{mod}}
\DeclareMathOperator{\Modules}{\mathsf{Mod}}
\DeclareMathOperator{\Gr}{\mathsf{Gr}}
\DeclareMathOperator{\dgmodules}{\mathsf{dg-mod}}
\DeclareMathOperator{\pretr}{\mathsf{pre-tr}}
\DeclareMathOperator{\Dif}{\mathsf{Dif}}
\DeclareMathOperator{\gr}{\mathsf{gr}}
\DeclareMathOperator{\op}{\mathsf{op}}
\DeclareMathOperator{\id}{\mathsf{id}}
\DeclareMathOperator{\proj}{\mathsf{proj}}
\DeclareMathOperator{\hp}{\mathsf{hp}}
\DeclareMathOperator{\characteristic}{\mathsf{char}}
\DeclareMathOperator{\Kernel}{\mathsf{Ker}}
\DeclareMathOperator{\Image}{\mathsf{Im}}
\def\amsbb{\use@mathgroup \M@U \symAMSb}
\numberwithin{equation}{section}
\newtheorem{theorem}[equation]{Theorem}
\newtheorem{lemma}[equation]{Lemma}
\newtheorem{proposition}[equation]{Proposition}
\newtheorem*{theorem*}{Theorem}
\newtheorem*{lemma*}{Lemma}
\newtheorem*{corollary*}{Corollary}
\newtheorem*{proposition*}{Proposition}
\theoremstyle{remark}
\newtheorem*{remark}{Remark}
\theoremstyle{definition}
\newtheorem*{example}{Example}
\newtheorem*{conjecture}{Conjecture}
\title{The triangulated hull of periodic complexes}
\author{Torkil Stai}
\address{Institutt for matematiske fag, NTNU, 7491 Trondheim, Norway}
\email{torkil.stai@math.ntnu.no}
\begin{document}

\begin{abstract}
   In the terms of an `$n$-periodic derived category', we describe explicitly how the orbit category of the bounded derived category of an algebra with respect to powers of the shift functor embeds in its triangulated hull. We obtain a large class of algebras whose orbit categories are strictly smaller than their triangulated hulls and a realization of the phenomenon that an automorphism need not induce the identity functor on the associated orbit category.
\end{abstract}

\maketitle

\tableofcontents

\section{Introduction} \label{introduction}
We let $\mathbb k$ be a field, and by an algebra we mean a finite dimensional $\mathbb k$-algebra. Moreover, we restrict to algebras of finite global dimension. A module is a right module, and if $\Lambda$ and $\Lambda'$ are algebras then on a $\Lambda\text{--}\Lambda'$-bimodule, $\Lambda$ acts from the left while $\Lambda'$ acts from the right. $\Modules \Lambda$ is the category of $\Lambda$-modules with $\modules \Lambda$ ($\proj \Lambda$) as its subcategory of finitely generated (projective) modules. When $\Pi$ is a graded algebra, $\Gr \Pi$ ($\gr \Pi$) is the category of graded (finitely generated) $\Pi$-modules.

Let $\T$ be a category and $\F$ an automorphism of $\T$. The \textit{orbit category} $\T\!/\F$ has the objects of $\T$, morphism spaces given by
\[
\T\!/\F (X,Y) = \mathop{\bigoplus}_{i \in \amsbb Z } \T (X, \F^i Y)
\]
and composition of $f \in \T(X, \F^i Y)$ with $g \in \T(Y, \F^j Z)$ given by $\F^i g \circ f$. For a feeble example, view a ring $R$ as a category with one object. Then the orbit category $R/1_R$ is nothing but $R[t, t^{-1}]$. It is natural to ask under what conditions certain properties of $\T$ are inherited by $\T\!/\F$. For instance, when does a triangulated structure on the former induce one on the latter in such a way that the canonical functor $\T \to \T\!/\F$ becomes a triangle functor? This has proven to be a delicate question, and it seems hopeless to aim for an answer in the generality of arbitrary triangulated categories. The evident obstacle is the fact that it is not clear how to define cones in the orbit category, since in general a morphism $X \to Y$ will be represented by some
\[
X \to \bigoplus_{i=1}^k \F^{l_i} Y.
\]
Fortunately, the introduction of the \textit{cluster category} $\Db(\modules \Lambda)/\amsbb S \circ \Sigma^{-2}$ in \cite{MR2249625} about a decade ago, motivated Keller's construction of `triangulated hulls' of certain orbit categories of derived categories in \cite{MR2184464}. As the name suggests, the triangulated hull enjoys a universal property, indicating that we should reformulate the above question as `when does the orbit category coincide with its triangulated hull?' and that answering this is as much as we can hope for. The drawback is that the triangulated hull might come across as abstruse. At the very least, engaging in calculations can certainly be disheartening. We attempt to mend this, albeit in a very specific setting. To be more precise, the rather lenient hypotheses of Keller are satisfied by the automorphism $\Sigma^n$ of $\Db(\modules \Lambda)$ for  each positive integer $n$, meaning that there is an embedding of $\Db(\modules \Lambda)/\Sigma^n$ into a triangulated hull. Our aim is to understand this embedding explicitly.

To this end, we employ the category of $n$-periodic differential complexes and chain maps over $\Lambda$, together with its homotopy category $\Kn(\modules \Lambda)$ and derived category $\Dn(\modules \Lambda)$, both of whom are triangulated. Our exposition will focus on the case $n=1$, that is modules over the algebra of dual numbers $\Lambda[X]/(X^2)$. These were considered already in the monograph of Cartan and Eilenberg \cite{MR0077480} as `modules with differentiation', and were employed as a means to prove theorems in commutative algebra under the name `differential modules' in work by Avramov, Buchweitz and Iyengar \cite{MR2308849}, from which we shall adopt much of our notation. While the study of differential modules is certainly of independent interest, also our primary motivation comes from their applications. More explicitly, and returning to arbitrary $n$, the link from the orbit category $\Db (\modules \Lambda)/ \Sigma^n$ is given by \textit{compression of complexes}, i.e.\ associating to each
\[
0 \to X^0 \to X^1 \to \cdots \to X^{l-1} \to X^l \to 0
\]
the $n$-periodic
\[
\cdots \to \bigoplus_{i \equiv 1 \, (n)}X^i \to \bigoplus_{i \equiv 2 \, (n)}X^i \to \cdots \to \bigoplus_{i \equiv n \, (n)}X^i \to \bigoplus_{i \equiv 1 \, (n)}X^i \to \cdots
\]
with the obvious differentials. Our most basal result then states the following.

\begin{theorem*}[See \ref{theoremthehull}]
   Compression of complexes, considered as a functor
   \[
   \Delta\colon \Db (\modules \Lambda)/ \Sigma^n \to \Dn(\modules \Lambda),
   \]
   is precisely the embedding of the orbit category into its triangulated hull.
\end{theorem*}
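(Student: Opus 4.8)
The plan is to put Keller's triangulated hull into the concrete shape furnished by his construction and then to recognise that shape as the derived category of $n$-periodic complexes. Since $\Lambda$ has finite global dimension we may identify $\Db(\modules\Lambda)$ with $\per\Lambda$, and Keller's triangulated hull of $\per\Lambda/\Sigma^n$ is by definition the thick subcategory of the derived category $\mathcal D(\mathcal B)$ of the DG orbit algebra $\mathcal B$ generated by the image of the canonical functor $-\otimes_\Lambda\mathcal B$. The first step is to make $\mathcal B$ explicit: unwinding the definition of the DG orbit category for $\F=\Sigma^n$ identifies $\mathcal B$ with $\Lambda\otimes_{\mathbb k}\mathbb k[t^{\pm 1}]$, where $t$ is an invertible homogeneous element of cohomological degree $-n$ and the differential is zero.

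The second, and principal, step is to produce an equivalence between $\dgmodules\mathcal B$ and the DG category $\Cn(\Modules\Lambda)$ of $n$-periodic complexes over $\Lambda$. A DG $\mathcal B$-module is a $\amsbb Z$-graded $\Lambda[t^{\pm 1}]$-module equipped with a differential; since $t$ is invertible of degree $-n$, such a module is determined by its components in degrees $0,1,\dots,n-1$ together with the induced degree-one differential, i.e.\ by an $n$-periodic complex. I would check that this folding respects morphism complexes, so that it is an equivalence of DG categories, passes to a triangle equivalence on homotopy categories $\Kn(\Modules\Lambda)\simeq\mathsf K(\mathcal B)$, and, after inverting homology isomorphisms on the one side and quasi-isomorphisms on the other, to a triangle equivalence $\Dn(\Modules\Lambda)\simeq\mathcal D(\mathcal B)$. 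Restricting to compact objects, $\per\mathcal B$ then corresponds to the thick subcategory generated by $\Lambda$ regarded as the constant $n$-periodic complex $\Delta\Lambda$; one must verify (either from the way $\Dn(\modules\Lambda)$ is set up, or by a short generation argument) that this thick closure is all of $\Dn(\modules\Lambda)$.

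The third step is to trace Keller's canonical functor through this equivalence. For a bounded complex $X$ of finitely generated projective modules one has $X\otimes_\Lambda\mathcal B=X\otimes_{\mathbb k}\mathbb k[t^{\pm 1}]$, and reading off its components in degrees $0,\dots,n-1$ returns exactly the complex whose $i$-th term is $\bigoplus_{j\equiv i\,(n)}X^j$ with the obvious differentials, namely the compression $\Delta X$. Naturality in $X$ is immediate, and the extra morphisms $\Db(\modules\Lambda)(X,\Sigma^{ni}Y)$ of the orbit category become, under $-\otimes_\Lambda\mathcal B$, the $t^i$-twisted maps, which is precisely the composition law defining $\Db(\modules\Lambda)/\Sigma^n$. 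Hence $\Delta$ is naturally isomorphic to the composite of Keller's canonical functor with the equivalence of the second step; since that canonical functor is fully faithful on the orbit category and its target is the thick closure of its image, the theorem follows.

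I expect the main obstacle to lie in the second step: checking that the folding between $\mathcal B$-modules and $n$-periodic complexes is genuinely compatible with quasi-isomorphisms and with cones, and — since periodic complexes are short of projectives and injectives — that $\Dn(\modules\Lambda)$ is exactly the category of compact objects of $\mathcal D(\mathcal B)$, equivalently the thick closure of $\Delta\Lambda$. One may repackage the crux as the Hom computation $\Dn(\modules\Lambda)(\Delta X,\Delta Y)\cong\bigoplus_{i\in\amsbb Z}\Db(\modules\Lambda)(X,\Sigma^{ni}Y)$, which simultaneously exhibits the induced functor $\Db(\modules\Lambda)/\Sigma^n\to\Dn(\modules\Lambda)$ as fully faithful and pins down its source as the orbit category; the remaining verifications are routine bookkeeping.
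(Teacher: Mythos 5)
Your outline follows the same route as the paper --- reduce Keller's construction to the DG algebra $\Lambda[t,t^{-1}]$, recognise its DG modules as $n$-periodic complexes, and trace the canonical functor to compression --- but your first step conceals a genuine gap. Keller's hull is $\per\mathcal B$ for the DG orbit \emph{category} $\mathcal B=\per\Lambda/\Sigma^n$, whose objects are \emph{all} perfect complexes; ``unwinding the definition'' does not identify $\mathcal B$ with $\Lambda\otimes_{\mathbb k}\mathbb k[t^{\pm 1}]$. That algebra is only the endomorphism DG algebra of the single object $\Lambda$ in $\mathcal B$, and one has to prove that passing to the one-object full DG subcategory $\mathcal B_0$ changes neither the (derived) module category nor the canonical functor; otherwise the rest of your computation concerns the wrong category. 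This is precisely where the paper invests its work: induction along the fully faithful inclusion $\mathcal B_0\hookrightarrow\mathcal B$ is fully faithful, restricts to pre-triangulated hulls, and pre-triangulated hulls determine DG module categories (Lemmas \ref{comparepretriangulatedhulls}--\ref{lemmainductionrestrictstopretriangulatedhulls}); the decisive input is that every object of $\mathcal B$ is an iterated cone, already in $Z^0\mathcal B$, of maps between shifts of $\Lambda$, so that $\thick_{\mathcal B}(\mathcal B_0)=\mathcal B$ (Lemma \ref{equivalenceofdgcategories}). Your folding of graded $\Lambda[t^{\pm 1}]$-modules-with-differential into $n$-periodic complexes is sound and matches Lemmas \ref{dgmodulesaregradedmodules} and \ref{equivalanceofexactcategories} (the paper routes it through the strongly graded algebra $\Gamma(\varepsilon)$ and Dade's theorem; the signs need care).

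The second deferred point --- that the thick closure of $\Delta\Lambda$ is all of $\Dn(\modules\Lambda)$, equivalently that the hull fills up the whole $n$-periodic derived category --- is rightly flagged by you as the crux, but it is not routine and does not follow ``from the way $\Dn(\modules\Lambda)$ is set up'': a priori $\Dn(\modules\Lambda)$ contains non-gradable objects, and Section \ref{applications} exploits exactly this, so generation must be proved. The paper's mechanism is the resolution theory of Subsection \ref{resolutions}: finiteness of global dimension plus a Horseshoe-type construction shows that every finitely generated $n$-periodic complex is quasi-isomorphic to one admitting a finite projective flag (Lemma \ref{htpresolutionsexist}), and such objects are precisely the iterated extensions of $(P,0)$ with $P\in\proj\Lambda$, i.e.\ exactly what corresponds to $\per$ of the one-object algebra; combined with Lemma \ref{localizationrestrictstofullyfaithful} this also yields the Hom formula you invoke at the end (Lemma \ref{lemmacompressionembedding}). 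With these two inputs supplied, your outline assembles into the paper's proof; without them it asserts rather than proves the two statements on which the theorem actually rests.
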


It is not difficult to demonstrate, in purely homological terms, that $\Delta$ is dense whenever $\Lambda$ is iterated tilted. Thus, once Theorem \ref{theoremthehull} is established, it will be clear that $\Db(\modules \Lambda)/\Sigma^n$ is triangulated for such algebras (Proposition \ref{weakversionofkellersresult}). This is a weak version of \cite[Theorem 1]{MR2184464}, which not only includes the piecewise hereditary algebras, but also deals with a much larger class of automorphisms than the powers of $\Sigma$. However, an orbit category need not coincide with its triangulated hull, and examples of this behavior were exhibited already in the fundamental paper of Keller. As far as the author is aware, for arbitrary automorphisms $\F$ adhering to Keller's premises, all known examples of algebras $\Lambda$ with the property that $\Db(\modules \Lambda)/\F$ coincides with its triangulated hull, are piecewise hereditary. It is only natural to investigate to what extent this condition is necessary, and a modest start could be answering if the piecewise hereditary algebras are the only ones for which $\Db(\modules \Lambda)/\Sigma^n$ is triangulated.  In the context of $\tau_2$-finite algebras and their \textit{generalized cluster categories} (see \cite{MR2640929}), the analogous question was raised by Amiot and Oppermann in \cite{MR3024264}, and similar to Theorem $7.1$ therein, we obtain the following.

\begin{theorem*}[See \ref{orientedcyclesuffices}]
   If $\Lambda$ is non-triangular, then $\Db(\modules \Lambda)/\Sigma^n$ is strictly smaller than its triangulated hull.
\end{theorem*}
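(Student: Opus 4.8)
The plan is to reduce the statement, via Theorem \ref{theoremthehull}, to a density assertion about $\Delta$. Since $\Delta$ is an embedding it is fully faithful, so the orbit category coincides with its triangulated hull precisely when $\Delta$ is essentially surjective onto $\thick_{\Dn(\modules \Lambda)} \Delta\bigl(\Db(\modules \Lambda)\bigr)$; hence it suffices to produce an object $W$ of this hull that is not isomorphic, in $\Dn(\modules \Lambda)$, to $\Delta(C)$ for any $C$. I would first record what the essential image of $\Delta$ looks like. As $\Lambda$ has finite global dimension, every object of $\Db(\modules \Lambda)$ is a bounded complex of finitely generated projectives, and may be taken minimal; so, up to isomorphism, the essential image of $\Delta$ consists of those $n$-periodic complexes that \emph{unfold} to a bounded complex of projectives — concretely, those admitting a minimal representative $(P,d)$ ($P$ finitely generated projective, $d(P)\subseteq\operatorname{rad}P$) whose underlying module carries a bounded $\amsbb Z$-grading, compatible with the ambient $\amsbb Z/n$-periodicity, for which $d$ is homogeneous of degree $1$. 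This ``bounded unfolding'' is the property to be violated, and part of the work lies in making it robust enough — under chain isomorphisms — to be a genuine obstruction.

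Next I would bring in non-triangularity. After passing to a basic algebra, write $\Lambda=\mathbb k Q/I$ with $Q$ containing an oriented cycle. Finite global dimension forbids loops: a loop at a vertex $v$ makes $P_v$ a direct summand of every term of the minimal projective resolution of the simple $S_v$, forcing that resolution to be infinite. Hence there is an oriented cycle $v_1\to v_2\to\cdots\to v_k\to v_1$ with $k\geq 2$ and pairwise distinct vertices, and in particular $\Ext^1_\Lambda(S_{v_i},S_{v_{i+1}})\neq 0$ for each $i\in\amsbb Z/k$. From this cyclic extension data I would build $W$ inside the hull as the cone of a morphism $f$ of $\Dn(\modules \Lambda)$, assembled out of generators of these $\Ext^1$-groups together with their Yoneda products around the cycle and, if needed, auxiliary summands to keep $f\circ f$ zero; crucially $f$ is to genuinely mix several ``shift degrees'' — something made possible only because $\Sigma^n\cong\id$ on $\Dn(\modules \Lambda)$ allows the cycle to be closed up, whereas over $\Db(\modules \Lambda)$ the homogeneous components of $f$ have incomparable targets and cannot be added. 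This is exactly the mechanism underlying the phenomenon flagged in the abstract: the extra identification $\Sigma^n\cong\id$ creates morphisms, and hence objects, not visible to the orbit category.

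The remaining step — and the one I expect to be the main obstacle — is to certify that $W$ has no bounded unfolding, i.e.\ is not isomorphic to any compression. I would do this by analysing a minimal representative $(P,d)$ of $W$: the indecomposable summands linked by the nonzero components of $d$ run around the vertices $v_1,\dots,v_k$ and back, so any $\amsbb Z$-grading compatible with the $\amsbb Z/n$-structure along which $d$ is homogeneous of degree $1$ would have to assign to these summands levels that strictly increase around the loop yet return to the start — impossible. The delicate points are: (i) arranging the square-zero condition on $d$ while retaining this cyclic linkage among the components, which is where the Yoneda powers and possible auxiliary summands enter and where the precise relations in $I$ matter; and (ii) ensuring no chain isomorphism untangles the cyclic pattern, so that the obstruction really is an invariant of $W$ in $\Dn(\modules \Lambda)$ and not merely of the chosen representative. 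Once $W$ is exhibited in the hull but outside $\Delta\bigl(\Db(\modules \Lambda)\bigr)$, Theorem \ref{theoremthehull} yields that $\Db(\modules \Lambda)/\Sigma^n$ is strictly contained in its triangulated hull.
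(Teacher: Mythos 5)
Your reduction is the right one and matches the paper: by Theorem \ref{theoremthehull} it suffices to exhibit an object of the hull that is not isomorphic in $\Dn(\modules \Lambda)$ to any compression. But from that point on your text is a plan rather than a proof, and the two steps you yourself flag as ``the main obstacle'' are exactly the content of the paper's argument, and they are not supplied. First, the witness object is never actually constructed: ``assembled out of generators of these $\Ext^1$-groups together with their Yoneda products around the cycle and, if needed, auxiliary summands to keep $f\circ f$ zero'' does not specify a differential, and arranging the square-zero condition while retaining the cyclic linkage is precisely where work is required (in the paper this is the explicit algorithm in the proof of Theorem \ref{orientedcyclesuffices}, which repairs the periodic sequence $\cdots \to P_l \to P_1 \to \cdots$ along the cycle by inserting maximal non-zero compositions of arrows until a genuine doubly infinite periodic complex of projectives is obtained). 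Second, your non-gradability certificate is a grading obstruction on a chosen minimal representative (``levels must strictly increase around the loop yet return''), and you concede you do not know how to make this invariant under isomorphism in $\Dn(\modules \Lambda)$; without that, nothing prevents a chain isomorphism from untangling the cyclic pattern, so the obstruction is not established. (As a side remark, your justification of loop-freeness --- that a loop at $v$ would force $P_v$ into every term of the minimal resolution of $S_v$ --- is not correct as stated; the no-loop theorem is a genuine theorem, though the paper's argument never needs it.)

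The paper avoids both difficulties by a different mechanism, Proposition \ref{existenceofperiodiccomplexsufficient}: if $Y$ is an indecomposable (unbounded) periodic complex of finitely generated projectives with period $l$, then $\bigoplus_{i=1}^{l}\Sigma^i Y$ is an $n$-periodic object which cannot be gradable, because an isomorphism $\bigoplus_{i=1}^{l}\Sigma^i Y \cong \bigoplus_{i\in\mathbb Z}\Sigma^i X$ with $X$ indecomposable in $\Kb(\proj\Lambda)$ would, by locality of $\End_{\Kb(\Lambda)}(X)$, force $X$ to be a summand of a shift of $Y$ --- impossible since $Y$ has no summand in $\Kb(\proj\Lambda)$. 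This Krull--Schmidt argument is isomorphism-invariant for free and replaces your delicate ``bounded unfolding'' analysis. Your idea of using a cycle of non-zero extensions between simples and splicing minimal projective resolutions does appear in the paper, but only as a supplementary reformulation after the conjecture, and even there it is used to manufacture an indecomposable periodic complex of projectives feeding into Proposition \ref{existenceofperiodiccomplexsufficient}, not to run a direct grading argument on the cone. To turn your proposal into a proof you would need either to carry out your construction and prove the invariance of the gradability obstruction, or to redirect it toward producing an indecomposable periodic complex of projectives and invoke the Krull--Schmidt argument above.
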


The cluster category $\Db(\modules \Lambda)/\amsbb S \circ \Sigma^{-2}$ is famously a $2$-Calabi-Yau triangulated category, meaning that its Serre functor and the square of its shift functor are isomorphic (as triangle functors). In other words, the composition $\amsbb S \circ \Sigma^{-2}$ becomes naturally isomorphic to the identity on the orbit category. As the cluster category is a most prominent orbit category, and since it looks plausible at first glance, one might be led to believe that this always happens, i.e.\ that an automorphism $\F$ on a category $\T$ always induces the identity on $\T\!/\F$. But surprisingly enough, on the orbit category $\Db(\modules \Lambda)/\Sigma^n$ the identity functor need not coincide with $\Sigma^n$. This peculiarity, which was in fact noted also in \cite{keller2008corrections}, can be attributed to a sign, and consequently we must avoid characteristic $2$ and even $n$ in order to find instances. Then, however, our new found description of the context as that of $n$-periodic complexes allows us to demonstrate plainly how the phenomenon materializes.

The paper is organized as follows. Section \ref{preliminaries} starts with recalling a few well established concepts regarding modules over differential graded (DG) categories, together with some minor observations of our own, all of which will be employed in the sequel. We then review the central construction of the triangulated hull, and note that the crucial step of `enhancing' the orbit category $\Db(\modules \Lambda)/\Sigma^n$ is in fact not as involved as in the general procedure. Lastly, we explain how to view DG modules over a DG algebra as graded modules over a related graded algebra. In Section \ref{differentialmodules} we are concerned with differential $\Lambda$-modules, or more generally $n$-periodic complexes, and their homotopy and derived categories. We introduce the key notion of compression of differential complexes, as well as the tensor product of a differential module with a complex along with its fundamental properties. We go on to develop a comprehensive theory of resolutions of differential modules, allowing us to identify the $n$-periodic derived category with a certain subcategory of the $n$-periodic homotopy category, and to explain how compression yields an embedding of the orbit category $\Db(\modules \Lambda)/\Sigma^n$ into $\Dn(\modules \Lambda)$. The first of our above cited main results (Theorem \ref{theoremthehull}) is proved in Section \ref{sectionthehull}, and put to use in Section \ref{applications}. After the foreknown Proposition \ref{weakversionofkellersresult} is attained, the investigation goes in the opposite direction, in the sense that we arrive at our second main result (Theorem \ref{orientedcyclesuffices}) and briefly discuss a conjecture. Lastly, we turn to the phenomenon of $\Sigma^n$ failing to induce the identity functor on $\Db(\modules \Lambda)/\Sigma^n$. We give a simple example in full detail, before providing an underlying technical reason for this behavior.

Throughout, most of the proofs and constructions will be written out only in terms of differential modules and the category $\Db(\modules \Lambda)/\Sigma$, but we stress that this is solely for cosmetic reasons and that our results extend in obvious ways to $n$-periodic complexes and the category $\Db(\modules \Lambda)/\Sigma^n$ for each positive integer $n$.

The author is greatly indebted to Steffen Oppermann, who not only suggested the topic, but also provided invaluable guidance at countless occations. Parts of this research was conducted during a stay at the University of Toronto in the winter of 2015, and the author is most grateful to Ragnar-Olaf Buchweitz for warm hospitality and helpful discussions.

\section{Preliminaries} \label{preliminaries}
\subsection{DG modules over DG categories} For a more comprehensive introduction and further details, see Keller's papers \cite{MR1258406, MR2275593}. Recall that a \textit{(right) DG module} over a DG category $\dgA$ is a DG functor $\dgA^{\op} \to \Dif(\mathbb k)$, where $\Dif(\mathbb k)$ denotes the DG category of chain complexes over $\mathbb k$, and that the DG modules over $\dgA$ form a DG category again, denoted by $\dgmodules \dgA$. Associated to the latter is the additive category $\mathcal{CA} = Z^0(\dgmodules \dgA)$ which carries an exact structure where the sequence
\[
0 \to L \xrightarrow m M \xrightarrow p N \to 0
\]
is a conflation if there is some $s \in \dgmodules \dgA(M,N)^0$ such that $ps = 1_N$ (equivalently, there is some $r \in \dgmodules \dgA(M,L)^0$ such that $rm=1_L$). This generalizes the `degreewise split' exact structure that can be imposed on any category of chain complexes and, indeed, $\mathcal{CA}$ is Frobenius with these conflations. The homotopy category $\mathcal H \dgA = H^0(\dgmodules \dgA)$ coincides with the stable category associated to $\mathcal C \dgA$, and is hence triangulated. Finally, the derived category $\mathcal D \dgA$ is obtained by formally inverting the class of quasi-isomorphisms in $\mathcal H \dgA$.

The \textit{Yoneda embedding}
\[
\iota \colon \dgA \to \dgmodules \dgA
\]
given by $X \mapsto X^{\wedge}= \dgA (-,X)$ identifies $\dgA$ with the subcategory of its module category consisting of the representables and, by virtue of being a DG functor, restricts to a functor $Z^0 \dgA \to \mathcal{CA}$. If the image of the latter is stable under extensions and shifts (i.e.\ syzygies and cosyzygies), then $H^0 \dgA$ becomes a triangulated subcategory of $\mathcal H \dgA$ and $\dgA$ is called \textit{pre-triangulated}. Naturally, the \textit{pre-triangulated hull} of $\dgA$ is the closure of this image under extensions, shifts and summands, denoted
\[
\dgA^{\pretr} = \thick_{\mathcal C \dgA} (\iota \dgA).
\]
The pre-triangulated hull satisfies a universal property, making it functorial and left adjoint to the inclusion of pre-triangulated DG categories in DG categories. Our first observation is that the pre-triangulated hull of a DG category completely determines its module category.

\begin{lemma} \label{comparepretriangulatedhulls}
   If $\dgA$ and $\dgA'$ are DG categories such that $\dgA^{\pretr} \cong \dgA'^{\pretr}$, then $\dgmodules \dgA \cong \dgmodules \dgA'$.
\end{lemma}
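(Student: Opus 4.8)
The plan is to reduce the lemma to one observation, namely that forming the pre-triangulated hull leaves the module category unchanged: for every DG category $\mathcal B$, restriction along the Yoneda inclusion $\iota\colon\mathcal B\hookrightarrow\mathcal B^{\pretr}$ is an equivalence of DG categories $\Res\colon\dgmodules(\mathcal B^{\pretr})\to\dgmodules\mathcal B$. Call this $(\star)$. Granting $(\star)$, the lemma follows formally: an equivalence of DG categories $\mathcal B\to\mathcal B'$, together with a quasi-inverse $\mathcal B'\to\mathcal B$, induces by restriction mutually quasi-inverse DG functors between $\dgmodules\mathcal B$ and $\dgmodules\mathcal B'$, so $\dgmodules(-)$ carries DG equivalences to DG equivalences; hence the hypothesis $\dgA^{\pretr}\cong\dgA'^{\pretr}$ gives $\dgmodules(\dgA^{\pretr})\cong\dgmodules(\dgA'^{\pretr})$, and composing with $(\star)$ applied to $\mathcal B=\dgA$ and to $\mathcal B=\dgA'$ yields $\dgmodules\dgA\cong\dgmodules(\dgA^{\pretr})\cong\dgmodules(\dgA'^{\pretr})\cong\dgmodules\dgA'$.

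For $(\star)$: since $\mathcal C\mathcal B$ is Frobenius and idempotent complete — an idempotent splits already on the underlying graded modules, compatibly with the differential — the category $\mathcal B^{\pretr}=\thick_{\mathcal C\mathcal B}(\iota\mathcal B)$ is a full DG subcategory of $\dgmodules\mathcal B$, and each of its objects is a finite iterate of shifts, mapping cones and direct summands of representable modules, all formed honestly inside $\dgmodules\mathcal B$. Define $E\colon\dgmodules\mathcal B\to\dgmodules(\mathcal B^{\pretr})$ by sending $M$ to the restriction to $\mathcal B^{\pretr}$ of the DG functor $\dgmodules\mathcal B(-,M)\colon(\dgmodules\mathcal B)^{\op}\to\Dif(\mathbb k)$; this is a DG functor, and by the Yoneda lemma $\Res\circ E\cong\id_{\dgmodules\mathcal B}$. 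For $E\circ\Res\cong\id$ one uses that shifts, mapping cones and direct summands are absolute colimits in the sense of $\Dif(\mathbb k)$-enriched category theory, hence preserved by every DG functor: on one side, $N(C)$ is thereby forced, for $C\in\mathcal B^{\pretr}$, to be the corresponding shift, (co)fibre or summand of the values of $N$ on the representable building blocks of $C$; on the other side the contravariant Hom-functor $\dgmodules\mathcal B(-,\Res N)$ has the very same property, being exact; an induction on the length of a presentation of $C$ then produces a natural isomorphism $N(C)\cong\dgmodules\mathcal B(C,\Res N)$, i.e.\ $N\cong E(\Res N)$. Thus $\Res$ is an equivalence of DG categories with explicit quasi-inverse $E$. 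Conceptually this is the $\Dif(\mathbb k)$-enriched incarnation of the invariance of presheaf categories under Cauchy completion, $\mathcal B^{\pretr}$ being the Cauchy completion of $\mathcal B$.

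The heart of the matter — and the step I expect to need the most care — is the isomorphism $N\cong E(\Res N)$ in $(\star)$, i.e.\ verifying that a DG module over $\mathcal B^{\pretr}$ really is recovered from its restriction to $\mathcal B$. The delicate point is that a DG module is only a DG functor and need not preserve cones a priori; what makes it work is that the cones, shifts and summands adjoined in passing to the pre-triangulated hull are the genuine ones inside $\dgmodules\mathcal B$, so that the defining degreewise split triangles are carried along by any DG functor and the Yoneda lemma then dictates the remaining values with no room for choice. Granting this, the remaining ingredients — essential surjectivity of $\Res$ (immediate from $\Res\circ E\cong\id$), the invariance of $\dgmodules(-)$ under DG equivalences, and the three-term comparison at the end — are entirely routine.
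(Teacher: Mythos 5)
Your argument is correct, and its overall skeleton is the paper's: show that passing to the pre-triangulated hull does not change the DG module category, then compose the three equivalences $\dgmodules\dgA\cong\dgmodules(\dgA^{\pretr})\cong\dgmodules(\dgA'^{\pretr})\cong\dgmodules\dgA'$. The difference lies in how the key step is justified. The paper simply invokes the universal property of $(-)^{\pretr}$ as a left adjoint to the inclusion of pre-triangulated DG categories: since $\Dif(\mathbb k)$ is pre-triangulated, DG functors $\dgA\to\Dif(\mathbb k)$ are the same as DG functors $\dgA^{\pretr}\to\Dif(\mathbb k)$, which is why it passes to \emph{left} modules, where this applies verbatim. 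You instead work with right modules and re-prove that fact by hand: restriction along $\iota\colon\mathcal B\hookrightarrow\mathcal B^{\pretr}$ is an equivalence because every object of $\mathcal B^{\pretr}$ is built from representables by shifts, degreewise split extensions (cones) and summands, all of which are encoded by morphism data inside the DG category and hence carried along by any DG module; enriched Yoneda then forces the values on $\mathcal B^{\pretr}$. This is a sound and more self-contained route (it amounts to proving the relevant instance of the universal property, in Cauchy-completion language), at the cost of redoing work the paper gets for free from the cited adjunction. One small presentational point: rather than producing the isomorphism $N\cong E(\Res N)$ by an object-by-object induction and then asserting naturality, it is cleaner to observe that $E$ is right adjoint to $\Res$ (since $\Res(C^{\wedge})\cong C$ for $C\in\mathcal B^{\pretr}$, coinduction is exactly your $E$), so the canonical closed unit map $N\to E(\Res N)$ exists a priori, and your absolute-colimit induction is then only needed to check it is an objectwise isomorphism, starting from Yoneda on representables.
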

\begin{proof}
It is sufficient, as well as more elegant, to check that the categories of left DG modules are equivalent. Denoting by $(\dgA, \dgA')$ the DG functors $\dgA \to \dgA'$, the left adjointness of the pre-triangulated hull reads $(\dgA, \mathcal P) \cong (\dgA^{\pretr}, \mathcal P)$ for each pre-triangulated $\mathcal P$. In particular, since $\Dif (\mathbb k)$ is pre-triangulated,
 \begin{align*}
    \dgmodules \dgA^{\op} & = (\dgA, \Dif (\mathbb k)) \\
    & \cong (\dgA^{\pretr}, \Dif(\mathbb k)) \\
    & \cong (\dgA'^{\pretr}, \Dif(\mathbb k)) \\
    & \cong (\dgA', \Dif (\mathbb k)) \\
    & = \dgmodules \dgA'^{\op}. \qedhere
\end{align*}
 \end{proof}

Associated to a DG functor $\dgF\colon \dgA \to \dgA'$ is the obvious DG restriction functor $\dgmodules \dgA' \to \dgmodules \dgA$. What is more, $\dgF$ gives rise to a DG induction functor $\dgmodules \dgA \to \dgmodules \dgA'$ (see Drinfeld \cite[Section 14]{MR2028075} or the proof of Lemma \ref{fullyfaithfulinducesfullyfaithful} for the definition), which is left adjoint to restriction. Being DG functors, restriction and induction give functors $\Res_{\dgF}\colon\mathcal {CA}' \to \mathcal {CA}$ and $\Ind_{\dgF}\colon\mathcal {CA} \to \mathcal {CA}'$, respectively.

\begin{lemma} \label{fullyfaithfulinducesfullyfaithful}
   If a DG functor $\dgF\colon \dgA \to \dgA'$ is fully faithful, then so is the induced functor $\Ind_{\dgF}\colon \mathcal{CA} \to \mathcal{CA'}$.
\end{lemma}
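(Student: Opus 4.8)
The plan is first to make the induction functor explicit, as promised in the text above. Viewing a DG module $M\in\dgmodules\dgA$ as a DG functor $\dgA^{\op}\to\Dif(\mathbb k)$ and $\dgA'$ as a DG $\dgA$-$\dgA'$-bimodule via $\dgF$, one sets $\Ind_\dgF M=M\otimes_\dgA\dgA'$; concretely this is the right DG $\dgA'$-module
\[
(\Ind_\dgF M)(Y)=\int^{X\in\dgA}M(X)\otimes_{\mathbb k}\dgA'(Y,\dgF X),
\]
the coend being the quotient of $\bigoplus_{X\in\dgA}M(X)\otimes_{\mathbb k}\dgA'(Y,\dgF X)$ which identifies $(m\cdot g)\otimes h$ with $m\otimes(\dgF g\circ h)$ for $g\in\dgA(X,X')$, $m\in M(X')$ and $h\in\dgA'(Y,\dgF X)$ (with the evident Koszul signs). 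The tensor--hom adjunction then supplies a natural isomorphism $\dgmodules\dgA'(\Ind_\dgF M,N')\cong\dgmodules\dgA(M,\Res_\dgF N')$, so that $\Ind_\dgF$ is left adjoint to $\Res_\dgF$, with unit $\eta_M\colon M\to\Res_\dgF\Ind_\dgF M$ given at an object $X\in\dgA$ by $m\mapsto[m\otimes\id_{\dgF X}]$.

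Next I would reduce the claim to the assertion that $\eta$ is an isomorphism. For DG modules $M,N$ over $\dgA$, naturality of $\eta$ identifies the composite
\[
\dgmodules\dgA(M,N)\xrightarrow{\Ind_\dgF}\dgmodules\dgA'(\Ind_\dgF M,\Ind_\dgF N)\cong\dgmodules\dgA(M,\Res_\dgF\Ind_\dgF N)
\]
with postcomposition by $\eta_N$. Hence, as soon as every $\eta_N$ is an isomorphism of DG modules, $\Ind_\dgF$ induces isomorphisms on all Hom-complexes; passing to $Z^0$ then shows that $\Ind_\dgF\colon\mathcal{CA}\to\mathcal{CA'}$ is fully faithful. (This is just the usual fact that a left adjoint is fully faithful precisely when its unit is invertible.)

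It remains to see that full faithfulness of $\dgF$ forces $\eta_M$ to be invertible, which I would check objectwise. At $X_0\in\dgA$ one has
\[
(\Res_\dgF\Ind_\dgF M)(X_0)=(\Ind_\dgF M)(\dgF X_0)=\int^{X\in\dgA}M(X)\otimes_{\mathbb k}\dgA'(\dgF X_0,\dgF X),
\]
and since $\dgF$ is fully faithful, $\dgF$ itself is an isomorphism of left DG $\dgA$-modules from $X\mapsto\dgA(X_0,X)$ onto $X\mapsto\dgA'(\dgF X_0,\dgF X)$. Therefore the coend above is canonically $M\otimes_\dgA\dgA(X_0,-)$, which by the co-Yoneda lemma is $M(X_0)$; tracing through, the resulting map $(\Res_\dgF\Ind_\dgF M)(X_0)\to M(X_0)$ is the two-sided inverse of $\eta_{M,X_0}$. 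Should one wish to bypass co-Yoneda, this inverse can simply be written as $[m\otimes f]\mapsto m\cdot\dgF^{-1}(f)$, where $\dgF^{-1}(f)\in\dgA(X_0,X)$ is the unique preimage of $f\in\dgA'(\dgF X_0,\dgF X)$; it is well defined on the coend because $\dgF^{-1}$ respects composition and the action of $\dgA$ on $M$ is associative, and it is manifestly inverse to $m\mapsto[m\otimes\id_{\dgF X_0}]$. Compatibility of $\eta_M$ and of its inverse with the differentials and with the right $\dgA$-action, as well as naturality in $M$, is immediate from the formulas.

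I do not expect a genuine obstacle here: once induction is unwound, the argument is purely formal. The only points calling for a little care are pinning down the coend relation (and the accompanying Koszul signs) so that the unit $\eta$ is correctly identified, and verifying that the maps appearing in the last step really are morphisms of DG $\dgA$-modules --- both of which are routine.
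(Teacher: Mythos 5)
Your proposal is correct and follows essentially the same route as the paper: both reduce, via the adjunction $(\Ind_{\dgF},\Res_{\dgF})$, to showing the unit is invertible, and then unwind the coend/tensor description of induction to identify $(\Res_{\dgF}\Ind_{\dgF}M)(X_0)$ with $M(X_0)$ using full faithfulness of $\dgF$. Your explicit inverse $[m\otimes f]\mapsto m\cdot \dgF^{-1}(f)$ is exactly the paper's observation that every simple tensor $u\otimes v$ equals $v^{\ast}(u)\otimes 1$, so the two arguments coincide up to phrasing (co-Yoneda versus element-level computation).
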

\begin{proof}
   Since $(\Ind_F, \Res_F)$ is an adjoint pair, a basic property of adjunctions reduces the proof to showing that the two compose to the identity on $\mathcal{CA}$. So let $Y \in \dgA'$, $M \in \dgmodules \dgA$ and identify $\dgA$ with its essential image in $\dgA'$ under $\dgF$. By definition,
   \[
   \Ind_{\dgF} (M)(Y) = M \otimes_{\dgA} (\Res_{\dgF}(\dgA'(Y,-))) = M \otimes_{\dgA} \dgA' (Y,-)
   \]
   where we only allow objects from $\dgA$ in the last argument. More explicitly,
   \[
   \Ind_{\dgF}(M)(Y) = \bigoplus_{X \in \dgA} M(X) \mathop{\otimes}_{\dgA(X,X)} \dgA'(Y,X) \Big/ \!\sim
   \]
   where $\sim$ is what one might expect: To each $f \in \dgA(X, X')$ we associate the two induced maps $f^{\ast}\colon M(X') \to M(X)$ and $f_{\ast} \colon \dgA'(Y,X) \to \dgA'(Y,X')$. The relation is then given by $f^{\ast}(u) \otimes v \sim u \otimes f_{\ast}(v)$ for each $u \in M(X')$ and $v \in \dgA'(Y,X)$. We claim that if $Y \in \dgA$, then $\Ind_{\dgF} (M)(Y) \cong M(Y)$, which is sufficient. Denoting by $u \otimes v$ the simple tensors, the crucial observation is that
   \[
   u \otimes v = u \otimes v_{\ast}(1_Y) = v^{\ast}(u) \otimes 1_Y
   \]
   for each $u \in M(X)$ and $v \in \dgA(Y,X)$. Therefore we can identify
   \[
   \Ind_{\dgF}(M)(Y) \cong \{ v^{\ast}(u) \in M(Y) \mid u \in M(X), X \in \dgA, v \in \dgA(Y,X) \} = M(Y)
   \]
   where the last equality is clear (choose $X=Y$ and $v=1_Y$).
\end{proof}

\begin{lemma} \label{lemmainductionrestrictstopretriangulatedhulls}
    Induction restricts to a functor $\Ind_{\dgF} \colon \dgA^{\pretr} \to \dgA'^{\pretr}$.
\end{lemma}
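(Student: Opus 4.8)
The plan is to introduce the full subcategory
\[
\mathcal{S} = \{\, M \in \mathcal{CA} \mid \Ind_{\dgF}(M) \in \dgA'^{\pretr} \,\}
\]
of $\mathcal{CA}$, and to show that it is a thick subcategory containing the representables. Since $\dgA^{\pretr} = \thick_{\mathcal{CA}}(\iota\dgA)$ is by definition the smallest such subcategory, this will immediately give $\dgA^{\pretr} \subseteq \mathcal{S}$, which is precisely the claim.

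First I would record the two formal properties of induction that drive the argument. Because $\Ind_{\dgF}\colon \dgmodules \dgA \to \dgmodules \dgA'$ is a DG functor, its degree-zero restriction $\Ind_{\dgF}\colon \mathcal{CA} \to \mathcal{CA}'$ is additive, commutes with the shift functor, and carries conflations to conflations: given a conflation $0 \to L \to M \to N \to 0$ with witnessing section $s \in \dgmodules \dgA(M,N)^0$, the morphism $\Ind_{\dgF}(s)$ is a degree-zero section for the image sequence. Thus $\Ind_{\dgF}$ is an exact functor of Frobenius exact categories that commutes with shifts. Next I would compute its effect on a representable. Using the explicit formula $\Ind_{\dgF}(M)(Y) = M \otimes_{\dgA} \dgA'(Y,\dgF(-))$ from the proof of Lemma \ref{fullyfaithfulinducesfullyfaithful} (the computation there of the tensor product does not need full faithfulness), the co-Yoneda lemma applied to the right DG module $X^{\wedge} = \dgA(-,X)$ yields a natural isomorphism
\[
\Ind_{\dgF}(X^{\wedge}) \cong \dgA'(-,\dgF X) = (\dgF X)^{\wedge}.
\]
In particular $\Ind_{\dgF}$ sends representables over $\dgA$ to representables over $\dgA'$, hence into $\dgA'^{\pretr}$.

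It then remains to check that $\mathcal{S}$ is thick. It contains $\iota\dgA$ by the previous paragraph. It is closed under shifts because $\Ind_{\dgF}$ commutes with the shift and $\dgA'^{\pretr}$ is shift-closed. It is closed under extensions because $\Ind_{\dgF}$ takes a conflation with outer terms in $\mathcal{S}$ to a conflation with outer terms in $\dgA'^{\pretr}$, and the latter is extension-closed. Finally it is closed under direct summands because $\Ind_{\dgF}$ is additive, so a summand of an object of $\mathcal{S}$ is sent to a summand of an object of $\dgA'^{\pretr}$, and $\dgA'^{\pretr}$ is summand-closed. Hence $\dgA^{\pretr} \subseteq \mathcal{S}$, as desired.

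I do not expect a genuine obstacle here: the entire content sits in the two bookkeeping facts that induction is exact for the semi-split conflations and commutes with shifts, together with the co-Yoneda identification $\Ind_{\dgF}(X^{\wedge}) \cong (\dgF X)^{\wedge}$; once these are in place the thick-subcategory argument is automatic. The only point requiring a little care is that this last isomorphism be natural in $X$, so that the image of a representable is literally (isomorphic to) a representable rather than merely an abstractly isomorphic module — but this is immediate from the functoriality of the tensor product in the DG variable.
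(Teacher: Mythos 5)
Your proposal is correct and follows essentially the same route as the paper: both arguments establish that $\Ind_{\dgF}$ sends representables to representables, carries the degreewise-split conflations to conflations, and is compatible with the shift, and then conclude from the description of $\dgA^{\pretr}$ as the thick closure of the representables, your subcategory $\mathcal S$ being just a tidier packaging of the paper's ``repeating these arguments.'' The one point to sharpen is your justification of shift-compatibility: being a DG functor does not by itself give commutation with $\Sigma$ --- you should instead invoke the tensor formula for induction (or its adjunction with the shift-commuting restriction), whereas the paper sidesteps the general statement by identifying $\Ind_{\dgF}(\Sigma(U^{\wedge}))$ with $\Sigma((\dgF U)^{\wedge})$ via a conflation into a projective-injective.
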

\begin{proof}
      Suppose the DG $\dgA$-module $M$ appears in a conflation
      \[
      0 \to X^{\wedge} \to M \to Y^{\wedge} \to 0
      \]
      in $\mathcal {CA}$, with $X,Y \in \dgA$. One easily checks that induction preserves representables, in the sense that $\Ind_{\dgF} (X^{\wedge}) \cong (\dgF X)^{\wedge}$. Hence the induced sequence in $\mathcal{CA'}$ is
      \[
      0 \to (\dgF X)^{\wedge} \to \Ind_{\dgF}(M) \to (\dgF Y)^{\wedge} \to 0,
      \]
      which must also be a conflation. Indeed, degreewise split exactness is preserved by any additive graded functor of degree zero. Further, for each $U \in \dgA$, the shift of $U^{\wedge}$ appears in a conflation
      \[
      0 \to U^{\wedge} \to V^{\wedge} \to \Sigma (U^{\wedge}) \to 0
      \]
      for some $V \in \dgA$. By observations above, this induces the conflation
      \[
      0 \to (\dgF U)^{\wedge} \to (\dgF V)^{\wedge} \to \Ind_{\dgF}(\Sigma (U^{\wedge})) \to 0
      \]
      in $\mathcal{CA'}$. Hence $\Ind_{\dgF}(\Sigma (U^{\wedge}))$, appearing as an admissible cokernel of $(\dgF U)^{\wedge}$ into a projective-injective, is isomorphic to $\Sigma((\dgF U)^{\wedge})$. Repeating these arguments shows that (summands of) iterated extensions and shifts of representables in $\mathcal{CA}$ are sent by the induction functor to $\dgA'^{\pretr}$.
\end{proof}

\begin{remark}
      The induction $\Ind_{\dgF} \colon \dgA^{\pretr} \to \dgA'^{\pretr}$ is nothing but $\dgF^{\pretr}$, i.e.\ the functor `pre-triangulated hull' applied to $\dgF$. Indeed, restricting to pre-triangulated hulls, the fact that induction preserves representables means precisely that
         \begin{center}
            \begin{tikzpicture}
               \matrix(a)[matrix of math nodes,
               row sep=2.5em, column sep=3em,
               text height=1.5ex, text depth=0.25ex]
               { \dgA & \dgA^{\pretr}  \\
               \dgA' & \dgA'^{\pretr} \\ };
               \path[->,font=\scriptsize](a-1-1) edge node[above]{$\iota$} (a-1-2);
               \path[->,font=\scriptsize](a-1-1) edge node[right]{$\dgF$} (a-2-1);
               \path[->,font=\scriptsize](a-2-1) edge node[above]{$\iota$} (a-2-2);
               \path[->,font=\scriptsize](a-1-2) edge node[right]{$\Ind_{\dgF}$} (a-2-2);
            \end{tikzpicture}
         \end{center}
      is commutative. Since the functoriality of the pre-triangulated hull comes from its universal property, the claim follows.
\end{remark}

\subsection{The triangulated hull} \label{orbitcategoriesandthetriangulatedhull} For each algebra $\Lambda$ and any positive integer $n$, the automorphism $\Sigma^n \cong - \otimes_{\Lambda} \Sigma^n \Lambda$ of $\Db(\modules \Lambda)$ clearly meets the requirements of \cite{MR2184464} that ensure the existence of a \textit{triangulated hull} of the orbit category $\Db(\modules \Lambda)/\Sigma^n$, that is to say, the data of a triangulated category $\M_{\Sigma^n}$ and an embedding
\[
\Db(\modules \Lambda)/\Sigma^n \to \M_{\Sigma^n}
\]
with a certain universal property. To construct $\M_{\Sigma^n}$, recall e.g.\ from \cite{MR1055981} that a \textit{DG enhancement} of a (triangulated) category $\T$ is a (pre-triangulated) DG category $\dgT$ with a (triangle) equivalence $\T \cong H^0 \dgT$. The upshot of lifting to DG categories is that canonical choices then become available, for instance, cones are famously functorial in `enhanced' triangulated categories. What is more, a DG enhanchement of the orbit category will unveil a canonical triangulated hull. Since $\Lambda$ is of finite global dimension, the DG category $\per \Lambda$ of perfect complexes over $\Lambda$ enhances its bounded derived category, and $\Sigma^n$ clearly lifts to a DG equivalence, also denoted by $\Sigma^n$, on $\per \Lambda$. This allows us to form the orbit category $\mathcal B_{\Sigma^n}= \per \Lambda / \Sigma^n$, which is naturally a DG category and gives the desired enhancement, namely
\[
\Db(\modules)/\Sigma^n \cong H^0 \mathcal B_{\Sigma^n}.
\]
The alluded to canonical choice of triangulated hull is thus
\[
\M_{\Sigma^n} = \per \mathcal B_{\Sigma^n} \subset \mathcal D \mathcal B_{\Sigma^n},
\]
i.e.\ the triangulated subcategory of the derived category of $\mathcal B_{\Sigma^n}$ generated by the representable functors, and the embedding of the orbit category is simply Yoneda
\[
H^0 \mathcal B_{\Sigma^n} \to \mathcal D \mathcal B_{\Sigma^n}
\]
which obviously factors through $\M_{\Sigma^n}$.

\begin{remark}
The above is a special case of a broader definition by Keller. In general, any automorphism $\F$ which is a \textit{standard equivalence}, i.e.\ isomorphic to
\[
 - \otimes_{\Lambda}^{\mathbf L} Z \colon \Db(\modules \Lambda) \to \Db(\modules \Lambda)
\]
for some complex $Z$ of $\Lambda\text{--}\Lambda$-bimodules, will lift to a DG functor $\dgF \colon \per \Lambda \to \per \Lambda$ in an obvious way. However, the latter might not be an equivalence, and so in order to enhance $\Db(\modules \Lambda)/\F$ we must invoke the \textit{DG orbit category}. That is, the DG category $\mathcal B'_{\dgF}$ with the objects of $\per \Lambda$ and morphism spaces of the form
\[
\mathcal B'_{\dgF} (X,Y) = \colim_p \mathop{\bigoplus}_{k \geq 0} \per \Lambda (\dgF^k X, \dgF^p Y).
\]
Its merit is of course that, when $\F$ satisfies certain mild technical hypotheses, there is an equivalence of categories $\Db(\modules \Lambda)/\F \cong H^0 \mathcal B'_{\dgF}$. We are thus in the same situation as above, and it is clear how to obtain the triangulated hull $\M_{\F}$. It is also straightforward to verify that the above $\mathcal B_{\Sigma^n}$ is a special case of the DG orbit category. Indeed, whenever $\F$ lifts to a DG equivalence $\dgF$ on $\per \Lambda$, the orbit category $\mathcal B_{\dgF} = \per \Lambda/\dgF$ exists and is naturally a DG category again. Moreover,
\[
\mathop{\bigoplus}_{k\geq 0} \per \Lambda(\dgF^k X, \dgF^p Y) \cong \mathop{\bigoplus}_{k\geq -p} \per \Lambda (\dgF^k X, Y)
\]
in this case, and hence the directed system defining $\mathcal B'_{\dgF}(X,Y)$ is the sequence
\[
\mathop{\bigoplus}_{k \geq 0} \per \Lambda (\dgF^k X, Y) \to \mathop{\bigoplus}_{k \geq -1} \per \Lambda(\dgF^k X, Y) \to \mathop{\bigoplus}_{k \geq -2} \per \Lambda(\dgF^k X, Y) \to \cdots
\]
in which each morphism is the canonical split mono. It follows that the colimit $\mathcal B'_{\dgF}(X,Y)$ is nothing but the coproduct $\mathcal B_{\dgF}(X,Y)$.
\end{remark}

\begin{remark}
   One can construct the triangulated hull also when $\Lambda$ has infinite global dimension. The enhancement of $\Db(\modules \Lambda)$ is then the DG version of $\mathsf K^{-,\mathsf b}(\proj \Lambda)$, but otherwise the construction carries over verbatim. Similarly one can handle orbit categories of $\mathsf D(\Modules \Lambda)$ using the DG enhancement given by the homotopically projective complexes (see \cite{MR1649844}).
\end{remark}

\subsection{DG modules as graded modules}\label{dgmodulesasgradedmodules}

Although the following construction appears to complicate matters, it will prove useful in Section \ref{sectionthehull}. For a DG algebra $\dgA$ let $\dgA\langle \varepsilon \rangle$ be the graded algebra with $|\varepsilon| = 1$, and $\dgA(\varepsilon)$ the graded algebra obtained as the quotient of $\dgA\langle \varepsilon \rangle$ modulo the relations $\varepsilon^2 = 0$ and $d_{\dgA}(a) + \varepsilon a - (-1)^{|a|}a \varepsilon = 0$.

\begin{lemma} \label{dgmodulesaregradedmodules}
$\Gr \dgA(\varepsilon)$ and $\mathcal C \dgA$ are equivalent categories.
\end{lemma}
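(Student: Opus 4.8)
The plan is to build a pair of mutually inverse functors between the two categories by hand. The idea behind the construction of $\dgA(\varepsilon)$ is that adjoining $\varepsilon$ turns the differential of a DG module into a (signed) right multiplication by $\varepsilon$, so that the Leibniz rule for a DG $\dgA$-module becomes precisely the defining relation of $\dgA(\varepsilon)$, and a degree-zero $\dgA$-linear map respects these multiplications exactly when it is a chain map.

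First I would record the normal form in $\dgA(\varepsilon)$: using the relation to rewrite $\varepsilon a = (-1)^{|a|} a \varepsilon - d_{\dgA}(a)$ and then using $\varepsilon^2 = 0$, every element of $\dgA(\varepsilon)$ is uniquely of the form $a_0 + a_1 \varepsilon$ with $a_i \in \dgA$; in particular $\dgA$ sits inside $\dgA(\varepsilon)$ as a graded subalgebra, so restriction of scalars makes sense. Now define $\Phi \colon \mathcal C \dgA \to \Gr \dgA(\varepsilon)$ by keeping the underlying graded $\dgA$-module of a DG module $(M, d_M)$ and letting $\varepsilon$ act on the right by $m\varepsilon := (-1)^{|m|} d_M(m)$. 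Checking that this defines a graded $\dgA(\varepsilon)$-module comes down to two identities: $m\varepsilon^2 = 0$, which is just $d_M^2 = 0$; and $m\bigl(d_{\dgA}(a) + \varepsilon a - (-1)^{|a|} a \varepsilon\bigr) = 0$, which upon expanding the two $\varepsilon$-actions and using $|ma| = |m| + |a|$ is exactly the Leibniz rule $d_M(ma) = d_M(m)a + (-1)^{|m|} m\, d_{\dgA}(a)$. In the reverse direction, define $\Psi \colon \Gr \dgA(\varepsilon) \to \mathcal C \dgA$ by restricting a graded $\dgA(\varepsilon)$-module $N$ to $\dgA$ and equipping it with the degree-one operator $d_N(n) := (-1)^{|n|} n\varepsilon$; then $d_N^2 = 0$ follows from $\varepsilon^2 = 0$, and the Leibniz rule for $d_N$ is the same rewriting of the relation as before.

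On morphisms both functors act as the identity on underlying $\dgA$-linear maps: a degree-zero map $f$ satisfies $f(m\varepsilon) = f(m)\varepsilon$ if and only if $f d_M = d_{M'} f$, i.e.\ if and only if $f$ is a morphism in $Z^0(\dgmodules \dgA)$. Finally, since $(-1)^{|m|} \cdot (-1)^{|m|} = 1$, the assignments $m\varepsilon = (-1)^{|m|} d_M(m)$ and $d_N(n) = (-1)^{|n|} n\varepsilon$ are inverse to each other, so $\Phi$ and $\Psi$ are mutually inverse equivalences (of additive categories). There is no deep obstacle here; the only thing to watch is the bookkeeping of Koszul signs — the naive action $m\varepsilon = d_M(m)$ fails the relation by a factor $(-1)^{|a|}$ on odd-degree $a$, and the twist by $(-1)^{|m|}$ is exactly what makes the relation and the Leibniz rule agree on the nose.
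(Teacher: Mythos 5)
Your proposal is correct and follows essentially the same route as the paper: the same assignments $m\varepsilon = (-1)^{|m|}d_M(m)$ and $d_N(n) = (-1)^{|n|}n\varepsilon$, the same verification that the defining relation of $\dgA(\varepsilon)$ is exactly the Leibniz rule, and morphisms left unchanged in both directions. The only addition is your normal-form remark $a_0 + a_1\varepsilon$, which is harmless but not needed, since restriction of scalars along $\dgA \to \dgA(\varepsilon)$ makes sense without it.
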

\begin{proof}
   Let $M \in \mathcal C \dgA$, i.e.\ a complex
   \[
   \cdots \to M^{i-1} \xrightarrow{d_M} M^{i} \xrightarrow{d_M} M^{i+1} \to \cdots
   \]
   with an $\dgA$-action satisfying Leibniz' rule $d_M(ma) = d_M(m)a +(-1)^{|m|}md_A(a)$. Associate to $M$ the graded $\dgA (\varepsilon)$-module $(M^i)_i$ with action given by $m \cdot a = ma$ for $a \in \dgA$, and $m \cdot \varepsilon = (-1)^{|m|}d_M(m)$. Note that this action is well-defined, since putting $\alpha = d_{\dgA}(a) + \varepsilon a - (-1)^{|a|}a \varepsilon$ we get
   \[
      m \cdot \alpha = m d_{\dgA}(a) + (-1)^{|m|}d_M(m) a - (-1)^{|m|} d_M(ma) = 0.
   \]
   On the other hand, take $N \in \Gr \dgA(\varepsilon)$, that is $N =  (N^i)_i$ with an $\dgA\langle \varepsilon \rangle$-action satisfying $n \varepsilon^2 = 0$ and $n(d_{\dgA}(a) + \varepsilon a - (-1)^{|a|}a \varepsilon)=0$. Associate to $N$ the DG $\dgA$-module given by the complex
   \[
   \cdots \to N^{i-1} \xrightarrow{(-1)^{i-1}\varepsilon} N^{i} \xrightarrow{(-1)^{i}\varepsilon} N^{i+1} \to \cdots
   \]
   with $\dgA$-action given by $n \cdot a = na$. This action satisfies Leibniz' rule, since
   \[
   d_N(n \cdot a) = (-1)^{|na|}na\varepsilon
   \]
   while
   \[
   d_N(n) \cdot a + (-1)^{|n|}n \cdot d_{\dgA}(a) = (-1)^{|n|} n \varepsilon a + (-1)^{|n|} n d_{\dgA}(a),
   \]
   and the difference of the latter two expressions is
   \[
   n((-1)^{|a|}a \varepsilon - \varepsilon a - d_{\dgA}(a)) = 0.
   \]
   Leaving morphisms unaltered, the above two assignments clearly give mutually inverse equivalences between the categories in question.
\end{proof}

\section{Differential modules}\label{differentialmodules}
This section introduces the context in which we will describe the orbit category $\Db(\modules \Lambda)/\Sigma^n$ and its triangulated hull for an algebra $\Lambda$. Our exposition focuses on $n=1$, but we point out the general versions of important notions and results along the way. At these points the reader might find it instructive to fill in details.

\subsection{Triangulated structure} \label{triangulatedstructure}
To an additive category $\A$ one can associate $\C1(\A)$, i.e.\ the category of $1$-periodic complexes and chain maps in $\A$. In other words, the objects of $\C1(\A)$ are pairs $(M,\varepsilon_M)$ with \textit{underlying object} $M \in \A$ and \textit{differential} $\varepsilon_M \in \A(M,M)$ squaring to zero, and its morphisms are those between the underlying objects that commute with the differentials involved. In \cite{TheUngradedDerivedCategory} it is shown that if $\A$ is Frobenius exact, then so is $\C1(\A)$. This means that if we call the sequence
\[
0 \to (L, \varepsilon_L) \to (M, \varepsilon_M) \to (N, \varepsilon_N) \to 0
\]
in $\C1(\A)$ a conflation whenever it admits a splitting in $\A$, then $\C1(\A)$ becomes Frobenius. The injective envelope of $(M, \varepsilon_M)$ is the middle term in the conflation
\[
\bigl(M, \varepsilon_M \bigr) \xrightarrow{\bigl(\begin{smallmatrix} \varepsilon_M \\ 1_M \end{smallmatrix}\bigr)} \Biggl(M \oplus M, \biggl(\begin{matrix} 0 & 0 \\ 1_M & 0 \end{matrix} \biggr) \Biggr) \xrightarrow{\bigl(\begin{smallmatrix} 1_M & - \varepsilon_M \end{smallmatrix}\bigr)} \bigl(M, -\varepsilon_M\bigr),
\]
and each projective-injective object is of this form. Hence the stable category $\K1(\A)$ is triangulated with suspension $\Sigma$ acting by $(M,\varepsilon_M) \mapsto (M, -\varepsilon_M)$ on objects and trivially on morphisms. The \textit{mapping cone} of $f\colon (M, \varepsilon_M) \to (N, \varepsilon_N)$ is the object
\[
C_f = \Biggl(M \oplus N, \biggl(\begin{matrix} -\varepsilon_M & 0 \\ f & \varepsilon_N \end{matrix} \biggr) \Biggr),
\]
and it is straightforward to verify that the standard triangle associated to $f$ is
\[
M \xrightarrow f N \to C_f \to \Sigma M
\]
with the canonical morphisms, as well as the fact that each conflation embeds in a triangle. Equivalently, if we say that the above $f$ is \textit{null-homotopic} whenever there is some $s \in \A(M,N)$ such that $f = s \varepsilon_M + \varepsilon_N s$, then $\K1(\A)$ is the \textit{$1$-periodic homotopy category}. When $\A$ is abelian, the \textit{homology} of $(M, \varepsilon_M)$ is the quotient $\Kernel (\varepsilon_M) / \Image (\varepsilon_M)$, giving rise to a homological functor $H\colon \K1(\A) \to \A$. The class $S$ of quasi-isomorphisms in $\K1(\A)$ is thus a multiplicative system compatible with the triangulation, which means that the \textit{$1$-periodic derived category}, i.e.\ the localization
\[
\D1(\A) = S^{-1} \K1(\A),
\]
carries a triangulated structure such that the localization functor $\K1(\A) \to \D1(\A)$ is a triangle functor. Further, e.g.\ by \cite{MR0210125}, $\D1(\A)$ admits a calculus of roofs.

Our primary engagement is with the case $\A = \modules \Lambda$. Evidently, denoting by
\[
\Lambda[\varepsilon] = \Lambda[X]/(X^2)
\]
the algebra of dual numbers, $\C1(\modules \Lambda)$ is precisely $\modules \Lambda[\varepsilon]$. We hence refer to the objects of $\C1(\modules \Lambda)$ as \textit{differential modules} and its morphisms as being $\Lambda[\varepsilon]$-linear.

\begin{remark}
For each positive integer $n$, the category $\Cn(\A)$ of $n$-periodic complexes and chain maps in $\A$ is Frobenius with respect to the `degreewise split' exact structure. Hence the $n$-periodic homotopy category $\Kn(\A)$, obtained as the stabilization of $\Cn(\A)$, and its localization $\Dn(\A)$ at quasi-isomorphisms are both triangulated. Moreover, $\Cn(\modules \Lambda)$ is nothing but the finitely generated modules over $\Lambda \otimes_{\mathbb k} I_n$, where $I_n$ is the selfinjective Nakayama algebra with $n$ vertices and Loewy length $2$.
\end{remark}

\subsection{Compression of complexes} \label{compressionofcomplexes}
The algebra $\Lambda[\varepsilon]$ is graded with $|\varepsilon|=1$, and a graded module is nothing but a complex over $\Lambda$. Hence there is a forgetful functor
\[
\Delta \colon \Cb(\modules \Lambda) \to \C1(\modules \Lambda).
\]
Explicitly, for a bounded complex
\[
X = 0 \to X^0 \xrightarrow{\partial^0} X^1 \to \cdots \to X^{l-1} \xrightarrow{\partial^{l-1}} X^l \to 0
\]
we have
\[
\Delta X = (\Delta X, \varepsilon_{\Delta X}) = \Biggl(\bigoplus_{i = 0}^l X^i, \bigoplus_{i = 0}^{l-1} \partial^i \Biggr).
\]
In \cite{MR2308849}, the authors coin the descriptive term \textit{compression} for this functor. Clearly, with respect to the obvious decomposition, the differential $\varepsilon_{\Delta X}$ is the matrix
\[
\begin{pmatrix}
0 & 0 & 0 & \cdots & 0 & 0 \\
\partial^0 & 0 & 0 & \cdots &0 & 0 \\
0 & \partial^1 & 0 & \cdots & 0 & 0 \\
\vdots & \vdots & \vdots & \ddots & \vdots &  \vdots \\
0 & 0 & 0 & \cdots & 0 & 0 \\
0 & 0 & 0 & \cdots & \partial^{l-1} & 0 \\
\end{pmatrix}.
\]
Objects in the essential image of $\Delta$ will be referred to as \textit{gradable}. Compression commutes with suspensions and cones, and moreover preserves quasi-isomorphisms, thus the same construction gives triangle functors
\[
\Delta\colon \Kb(\modules \Lambda) \to \K1(\modules \Lambda)
\]
and
\[
\Delta\colon \Db(\modules \Lambda) \to \D1(\modules \Lambda).
\]

\subsection{Tensor products} \label{tensorproducts}
For each algebra $\Lambda'$ and each bounded complex
\[
X = 0 \to X^0 \xrightarrow{\partial^0} X^1 \to \cdots \to X^{l-1} \xrightarrow{\partial^{l-1}} X^l \to 0
\]
of $\Lambda\text{--}\Lambda'$-bimodules, the \textit{tensor product} of a differential $\Lambda$-module $(M, \varepsilon_M)$ with $X$ is the differential $\Lambda'$-module
\[
M \boxtimes_{\Lambda} X = \Biggl( \bigoplus_{i=0}^l (M \otimes_{\Lambda}X^i), m \otimes x \mapsto m \otimes \partial(x) + (-1)^{|x|} \varepsilon_M(m) \otimes x \Biggr).
\]
This construction, together with the obvious action on morphisms, gives a functor
\[
 - \boxtimes_{\Lambda} X \colon \C1(\modules \Lambda) \to \C1(\modules \Lambda').
 \]
The tensor product commutes with compression, in the sense that when $Y$ is a complex of right $\Lambda$-modules, then there is a canonical isomorphism
\[
(\Delta Y) \boxtimes_{\Lambda} X \cong \Delta (Y \otimes_{\Lambda} X)
\]
of differential $\Lambda'$-modules. Further, if $\Lambda''$ is also an algebra and $Z$ is a complex of $\Lambda'\text{--}\Lambda''$-bimodules, then there is a canonical isomorphism
\begin{equation} \label{tensorproductisassociative}
M \boxtimes_{\Lambda} (X \otimes_{\Lambda'} Z) \cong (M \boxtimes_{\Lambda} X) \boxtimes_{\Lambda'} Z
\end{equation}
of differential $\Lambda''$-modules. What is more, when $X$ satisfies the expected hypotheses (i.e.\ when each $X^i$ is flat as $\Lambda$-module), then $- \boxtimes_{\Lambda} X$ preserves quasi-isomorphisms and hence defines a functor $\D1(\modules \Lambda) \to \D1(\modules \Lambda')$ making the diagram
\begin{equation} \label{tensorproductcommuteswithcompressiondiagram}
   \begin{tikzpicture}[baseline=(current bounding box.center)]
  \matrix(a)[matrix of math nodes,
  row sep=2.5em, column sep=3em,
  text height=1.5ex, text depth=0.25ex]
  {\Db(\modules \Lambda) & \Db(\modules \Lambda') \\
  \D1(\modules \Lambda) & \D1(\modules \Lambda') \\};
  \path[->,font=\scriptsize](a-1-1) edge node[above]{$- \otimes_{\Lambda} X$} (a-1-2)
  (a-2-1) edge node[above]{$- \boxtimes_{\Lambda} X$}(a-2-2)
  (a-1-1) edge node[right]{$\Delta$}(a-2-1)
  (a-1-2) edge node[right]{$\Delta$}(a-2-2);
  \end{tikzpicture}
\end{equation}
commutative. The following fundamental property will be exploited in Section \ref{applications}.

\begin{lemma} \label{derivedequivalenceinducesperiodicequivalence}
   If $X$ gives rise to an equivalence
   \[
   - \otimes_{\Lambda} X\colon \Db(\modules \Lambda) \to \Db(\modules \Lambda'),
   \]
   then also
   \[
   - \boxtimes_{\Lambda} X \colon \D1(\modules \Lambda) \to \D1(\modules \Lambda').
   \]
   is invertible.
\end{lemma}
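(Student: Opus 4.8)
The plan is to exhibit an explicit quasi-inverse coming from the quasi-inverse of $- \otimes_\Lambda X$. Since $- \otimes_\Lambda X$ is an equivalence of bounded derived categories, there is a complex $X'$ of $\Lambda'\text{--}\Lambda$-bimodules, namely (a projective resolution of) $\RHom_\Lambda(X, \Lambda)$, such that $- \otimes_{\Lambda'} X'$ is quasi-inverse to $- \otimes_\Lambda X$; concretely there are isomorphisms $X \otimes_{\Lambda'} X' \cong \Lambda$ and $X' \otimes_\Lambda X \cong \Lambda'$ in the respective derived categories of bimodules. (One may also replace $X$ by a bounded complex of $\Lambda$-flat bimodules without changing the functor on $\Db$, so that $- \boxtimes_\Lambda X$ is already defined and the associativity isomorphism \eqref{tensorproductisassociative} applies; similarly for $X'$.) The candidate quasi-inverse for $- \boxtimes_\Lambda X$ is then $- \boxtimes_{\Lambda'} X'$.

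First I would verify that $- \boxtimes_{\Lambda'} X'$ is well-defined on $\D1$, which is immediate from the paragraph preceding the lemma once $X'$ is chosen $\Lambda'$-flat in each degree. Next, using the associativity isomorphism \eqref{tensorproductisassociative}, for any differential module $(M, \varepsilon_M)$ one has
\[
(M \boxtimes_\Lambda X) \boxtimes_{\Lambda'} X' \cong M \boxtimes_\Lambda (X \otimes_{\Lambda'} X')
\]
and symmetrically $(M \boxtimes_{\Lambda'} X') \boxtimes_\Lambda X \cong M \boxtimes_{\Lambda'} (X' \otimes_\Lambda X)$. So it suffices to show that $- \boxtimes_\Lambda (X \otimes_{\Lambda'} X')$ is naturally isomorphic to the identity on $\D1(\modules \Lambda)$, and likewise over $\Lambda'$. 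The point is that $X \otimes_{\Lambda'} X'$ is a complex of $\Lambda\text{--}\Lambda$-bimodules which is isomorphic to $\Lambda$ in $\Db$ of bimodules; I would argue that for any such complex $W$ with $W \cong \Lambda$ in $\Db(\Lambda^{\op} \otimes \Lambda)$ and each term $\Lambda$-flat, the functor $- \boxtimes_\Lambda W$ is isomorphic to the identity on $\D1(\modules \Lambda)$. This is the analogue of the familiar fact that $- \otimes_\Lambda W \simeq \id$ on $\Db$, transported to differential modules.

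To prove that last claim I would reduce to the gradable case and then to complexes of projectives. Every differential module admits a surjective quasi-isomorphism from a gradable one (a compressed projective resolution — this is part of the "comprehensive theory of resolutions" promised in the introduction, and at minimum one can compress a projective resolution of $M$ viewed through the graded structure on $\Lambda[\varepsilon]$), and $- \boxtimes_\Lambda W$ preserves quasi-isomorphisms, so it is enough to produce the natural isomorphism on gradable objects. For $Y$ a bounded complex of projective $\Lambda$-modules, the compatibility of $\boxtimes$ with compression gives $(\Delta Y) \boxtimes_\Lambda W \cong \Delta(Y \otimes_\Lambda W)$, and the classical derived-category isomorphism $Y \otimes_\Lambda W \cong Y$ (natural in $Y$), followed by applying the triangle functor $\Delta\colon \Db(\modules \Lambda) \to \D1(\modules \Lambda)$, yields $(\Delta Y)\boxtimes_\Lambda W \cong \Delta Y$ naturally. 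The only subtlety is checking that this isomorphism, defined a priori on the gradable subcategory, extends to a natural isomorphism of functors on all of $\D1(\modules \Lambda)$; here I would use that the gradable objects generate, that both functors are triangle functors commuting with the resolution, and a standard descent-along-quasi-isomorphisms argument.

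I expect the main obstacle to be precisely this last extension step: promoting the evident isomorphism on gradable (or projective) differential modules to a natural isomorphism of functors on the whole $1$-periodic derived category, since a general differential module is \emph{not} gradable and the roof calculus must be handled with care to see that the constructed isomorphism is compatible with morphisms represented by roofs. Everything else — choosing $X'$, invoking associativity \eqref{tensorproductisassociative}, and the compression compatibility $(\Delta Y)\boxtimes_\Lambda X \cong \Delta(Y \otimes_\Lambda X)$ — is formal given the results already assembled in this section.
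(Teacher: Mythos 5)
Your first two paragraphs match the paper's proof: choose the inverse bimodule complex (the paper takes a projective resolution $Y$ of $\RHom_{\Lambda'}(X,\Lambda')$, which for a two-sided tilting complex agrees with your $\RHom_{\Lambda}(X,\Lambda)$), invoke associativity (\ref{tensorproductisassociative}), and reduce to showing $-\boxtimes_{\Lambda}W\cong\id$ on $\D1(\modules\Lambda)$ for a bounded complex $W$ of $\Lambda$-flat bimodules isomorphic to $\Lambda$ in the derived category of bimodules. The gap is in your third paragraph. Its opening claim --- that every differential module admits a quasi-isomorphism from a \emph{gradable} one --- is false in general: the resolutions of Subsection \ref{resolutions} produce modules admitting projective flags, whose differentials are strictly lower triangular but not concentrated on the subdiagonal, and such modules need not lie in the essential image of $\Delta$. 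If the claim were true, $\Delta\colon\Db(\modules\Lambda)\to\D1(\modules\Lambda)$ would be dense for every $\Lambda$, contradicting Theorem \ref{orientedcyclesuffices}; the failure of exactly this density is the subject of the paper. Consequently the reduction to gradable objects cannot be carried out, and the ``descent-along-quasi-isomorphisms'' extension you flag as the main obstacle is not merely delicate but unavailable: a natural isomorphism defined on a full non-dense subcategory does not determine one on all of $\D1(\modules\Lambda)$, and defining it objectwise through a chosen resolution leaves both independence of the choice and naturality unproved (there is also the smaller unchecked point that morphisms between gradables are orbit-category morphisms $\bigoplus_i\Hom_{\Db(\Lambda)}(Y,\Sigma^iY')$, so naturality over $\Db(\modules\Lambda)$ alone does not suffice).

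The missing idea --- and what the paper does instead --- is that no resolution of $M$ and no gradability are needed, because $M\boxtimes_{\Lambda}-$ is functorial in the second (complex) variable. Realize the isomorphism $X\otimes_{\Lambda'}Y\cong\Lambda$ by a (zig-zag of) quasi-isomorphism(s) of bounded complexes of bimodules with $\Lambda$-flat terms; applying $M\boxtimes_{\Lambda}-$ gives, for all $M$ simultaneously, morphisms between $M\boxtimes_{\Lambda}(X\otimes_{\Lambda'}Y)$ and $M\boxtimes_{\Lambda}\Lambda$ which are natural in $M$ by construction and are quasi-isomorphisms because each cone has the form $M\boxtimes_{\Lambda}C$ with $C$ a bounded acyclic complex of flat modules. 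Combining this with the unit isomorphism $M\boxtimes_{\Lambda}\Lambda\cong M$ and associativity yields $(M\boxtimes_{\Lambda}X)\boxtimes_{\Lambda'}Y\cong M\boxtimes_{\Lambda}(X\otimes_{\Lambda'}Y)\cong M$ naturally on all of $\D1(\modules\Lambda)$, and symmetrically over $\Lambda'$. This is exactly the content of the paper's two remarks that the stalk complex $\Lambda$ is a one-sided tensor unit and that quasi-isomorphic bounded complexes induce naturally isomorphic functors $-\boxtimes$; with that observation your first two paragraphs already complete the proof and the third can be discarded.
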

\begin{proof}
   A quasi-inverse of $- \otimes_{\Lambda} X$ is
   \[
   \RHom_{\Lambda'}(X,-) \cong - \otimes_{\Lambda'} Y,
   \]
   where $Y$ is a $\Lambda'$-projective resolution of $\RHom_{\Lambda'}(X, \Lambda')$. We claim that $- \boxtimes_{\Lambda'} Y$ is a quasi-inverse of $- \boxtimes_{\Lambda} X$, and will demonstrate how the two compose to the identity on $\D1(\modules \Lambda)$. To this end, observe that the stalk complex $\Lambda$ serves as a one-sided tensor unit, in the sense that for each differential module $M$ there is an isomorphism
   \[
   M \boxtimes_{\Lambda} \Lambda \cong M,
   \]
   natural in $M$. Further, quasi-isomorphic (bounded) complexes clearly give rise to naturally isomorphic tensor product functors. Combining these observations with the isomorphisms
   \[
   X \otimes_{\Lambda'} Y \cong \RHom_{\Lambda'}(X,X) \cong \Lambda
   \]
   in $\Db(\modules \Lambda)$, it follows from (\ref{tensorproductisassociative}) that there are natural isomorphisms
   \[
   (M \boxtimes_{\Lambda} X) \boxtimes_{\Lambda'} Y \cong M \boxtimes_{\Lambda} (X \otimes_{\Lambda'} Y) \cong M \boxtimes_{\Lambda} \Lambda = M
   \]
   in $\D1(\modules \Lambda)$. A similar argument shows that the reversed composition is isomorphic to the identity on $\D1(\modules \Lambda')$.
\end{proof}

\subsection{Resolutions} \label{resolutions} Using the terminology of \cite{MR2308849}, a \textit{projective flag} in a differential module $(P, \varepsilon_P)$ is a decomposition of the underlying module $P = P_l \oplus P_{l-1} \oplus \dots \oplus P_0$ where each $P_i \in \proj \Lambda$, with respect to which $\varepsilon_P$ is of the form
\begin{equation} \label{strictlylowertriangularmatrix}
   \begin{pmatrix}
   0 & 0 & 0 & \cdots & 0 & 0 \\
   \partial^{l,l-1} & 0 & 0 & \cdots &0 & 0 \\
   \partial^{l,l-2} & \partial^{l-1,l-2} & 0 & \cdots & 0 & 0 \\
   \vdots & \vdots & \vdots & \ddots & \vdots &  \vdots \\
   \partial^{l,1} & \partial^{l-1,1} & \partial^{l-2,1} & \cdots & 0 & 0 \\
   \partial^{l,0} & \partial^{l-1,0} & \partial^{l-2,0} & \cdots & \partial^{1,0} & 0 \\
   \end{pmatrix}.
\end{equation}
Not only are the differential modules admitting projective flags homologically most viable (e.g.\ Lemma \ref{localizationrestrictstofullyfaithful} and \cite[Proposition 2.4]{MR2308849}), they also provide resolutions in the following sense.

\begin{lemma} \label{htpresolutionsexist}
   Each differential module $(M,\varepsilon_M) \in \C1(\modules \Lambda)$ is quasi-isomorphic to one admitting a projective flag.
\end{lemma}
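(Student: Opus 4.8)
The plan is to construct the projective flag explicitly by mimicking the classical construction of a projective resolution of a complex, read off degree by degree. Given $(M,\varepsilon_M)$, I would not work with $M$ directly but pass through its homology: recall that $H$ is a homological functor, and although $M$ itself is merely a $\Lambda[\varepsilon]$-module, the subquotient $H(M)=\Kernel(\varepsilon_M)/\Image(\varepsilon_M)$ is an ordinary $\Lambda$-module of finite length. The key point is that $\Lambda$ has finite global dimension, so $H(M)$ admits a finite projective resolution over $\Lambda$, say of length $\le l$ where $l=\gldim\Lambda$. The differential module I want to build will have underlying module $P=P_l\oplus\cdots\oplus P_0$ with the $P_i$'s taken from (a shift of) such a resolution, and $\varepsilon_P$ strictly lower triangular as in (\ref{strictlylowertriangularmatrix}).

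Concretely, first I would pick a projective cover $P_0\twoheadrightarrow \Kernel(\varepsilon_M)\hookrightarrow M$; this gives a map $\partial^{1,0}\colon P_0\to M$ landing in the cycles, hence a candidate for the $(1,0)$-entry if we think of $M$ as the ``$P_0$-slot''. The issue is that $M$ is not projective, so I iterate: I need the composite $P_0\to M$ to become a quasi-isomorphism after completing the flag. The right way to organize this is to build a genuine bounded complex of projectives $Q^\bullet\colon 0\to Q^{-l}\to\cdots\to Q^0\to 0$ together with a quasi-isomorphism of complexes $Q^\bullet\to (\cdots\to 0\to M\xrightarrow{\varepsilon_M}M\to 0)$? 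No — cleaner: build $Q^\bullet$ to be a projective resolution of $M$ viewed as a $2$-term complex $(M\xrightarrow{\varepsilon_M}M)$ placed in degrees $0,1$, take the ``total'' object of $Q^\bullet$ equipped with the sign-twisted differential coming from the internal differential of $Q^\bullet$ plus the lift of $\varepsilon_M$, and observe this total object is exactly a differential module with a projective flag. The map from this total object to $M$ (collapsing degrees) is then checked to be a quasi-isomorphism using that each column of $Q^\bullet$ resolves the corresponding term. In fact, since compression $\Delta$ takes a projective complex to a differential module with a projective flag, the slickest route may be: choose a bounded complex $P^\bullet$ of projective $\Lambda$-modules with a quasi-isomorphism $P^\bullet\xrightarrow{\sim}$ (some complex quasi-isomorphic to the ``unfolding'' of $M$), then $\Delta P^\bullet\xrightarrow{\sim}M$ after adjusting. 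I would make this precise by producing, inductively in $i$, projectives $P_i$ and maps $\partial^{j,k}$ so that the mapping cone of the partial flag onto $M$ has homology concentrated in increasingly negative degrees, terminating after $l$ steps by finite global dimension.

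The key steps, in order: (1) set $l=\gldim\Lambda$ and reduce to resolving $M$ as an object over $\Lambda[\varepsilon]$ by a complex of ``free $\Lambda[\varepsilon]$-modules on projective $\Lambda$-modules'', i.e.\ differential modules of the form $\Delta$(stalk), which are precisely the projective-injectives; (2) run the standard horseshoe/comparison induction, choosing at each stage a projective $\Lambda$-cover of the homology of the current syzygy, to produce the entries $\partial^{j,k}$; (3) check the process stops after $l+1$ terms, using that the $l$-th syzygy over $\Lambda$ is projective and hence the corresponding cone is contractible; (4) verify that the resulting differential module, with underlying module $\bigoplus_{i=0}^l P_i$ and the assembled strictly-lower-triangular $\varepsilon_P$, maps quasi-isomorphically to $(M,\varepsilon_M)$, where the comparison map is built from the covers chosen in step (2) together with their nullhomotopies. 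The bookkeeping of signs in $\varepsilon_P$ (so that $\varepsilon_P^2=0$) should match the signs appearing in the mapping cone formula $C_f$ and in the tensor product $\boxtimes$ above.

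The main obstacle I anticipate is step (3)–(4): ensuring the induction actually terminates and that the terminal comparison map is a quasi-isomorphism rather than merely a map inducing a surjection on homology. The subtlety is that ``homology'' here is the single group $H(M)$, not a graded collection, so the usual complex-resolution argument must be transported carefully through the equivalence between differential modules and $2$-periodic (here $1$-periodic) data; one must confirm that killing $H$ of each successive syzygy over $\Lambda$ genuinely forces $H$ of the total cone to vanish, which uses both that $\varepsilon_P$ is strictly lower triangular (so the associated ``stupid'' filtration has projective subquotients) and that a bounded filtered complex with acyclic subquotients is acyclic. Once that spectral-sequence-style comparison is in hand, the rest is the routine verification that $\varepsilon_P$ squares to zero and that the comparison morphism is $\Lambda[\varepsilon]$-linear.
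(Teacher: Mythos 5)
Your proposal floats several different constructions and never carries one through; unfortunately the routes you describe most concretely would fail, and the viable one is abandoned exactly where the work lies. The ``slickest route'' via an unfolding is unavailable: a differential module need not be gradable --- this is precisely what Proposition \ref{existenceofperiodiccomplexsufficient} and Theorem \ref{orientedcyclesuffices} exploit --- so in general there is no bounded complex whose compression is quasi-isomorphic to $(M,\varepsilon_M)$. The two-term-complex variant resolves the wrong object: the compression of $0\to M\xrightarrow{\varepsilon_M}M\to 0$ has homology $\Kernel(\varepsilon_M)\oplus M/\Image(\varepsilon_M)$, not $H(M)$. And your step (1) conflates distinct classes: $\Delta$ of a stalk complex $Q$ is $(Q,0)$, which is neither a free $\Lambda[\varepsilon]$-module nor projective-injective, whereas the free modules $Q\otimes_{\Lambda}\Lambda[\varepsilon]$ are contractible, so any filtration by them yields only acyclic differential modules, and an honest $\Lambda[\varepsilon]$-projective resolution is a generally unbounded complex (the dual numbers have infinite global dimension) whose acyclicity condition has nothing to do with the homology $H$ used here. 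The correct building blocks are the $(Q,0)$ with $Q\in\proj\Lambda$, i.e.\ the filtration factors of a flag, and these are not projective over $\Lambda[\varepsilon]$ at all.

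The remaining idea --- lift $\varepsilon_M$ to a finite $\Lambda$-projective resolution and totalize with signs, or equivalently attach copies of $(Q,0)$ step by step --- is in the right spirit, but the points you postpone are the entire content of the lemma. An arbitrary chain lift of $\varepsilon_M$ squares to zero and (anti)commutes with the internal differential only up to homotopy, so ``internal differential plus lift'' need not square to zero; and in the inductive version there is no grading on homology, so ``concentrated in increasingly negative degrees'' is meaningless, and both termination and the final quasi-isomorphism remain to be proved (surjectivity on homology at each stage is not enough). The paper dissolves exactly this strictness problem in one stroke: applying the Horseshoe Lemma first to $0\to\Image(\varepsilon_M)\to\Kernel(\varepsilon_M)\to H(M)\to 0$ and then to $0\to\Kernel(\varepsilon_M)\to M\to\Image(\varepsilon_M)\to 0$ gives a finite projective resolution with terms $P_i=X_i\oplus Y_i\oplus X_i$ carrying square-zero endomorphisms $\hat\varepsilon_i$ that strictly commute with all maps in the resolution; after the sign change $\varepsilon_i=(-1)^i\hat\varepsilon_i$ the totalization carries a square-zero differential which is of the form (\ref{strictlylowertriangularmatrix}) with respect to the refined decomposition, and $(p_l,\dots,p_0)\mapsto\partial_0(p_0)$ is checked directly to be a quasi-isomorphism. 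Until you specify how your lift is made strictly square-zero (or how your attachment process terminates and why the resulting comparison map is a quasi-isomorphism), the proposal has a genuine gap.
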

\begin{proof}
   Choose finite projective resolutions
   \[
   0 \to X_{l'} \to \cdots \to X_1 \to X_0 \to \Image (\varepsilon_M) \to 0
   \]
   and
   \[
   0 \to Y_{l''} \to \cdots \to Y_1 \to Y_0 \to H(M) \to 0
   \]
   in $\modules \Lambda$. Applying the Horseshoe Lemma, first to
   \[
   0 \to \Image (\varepsilon_M) \to \Kernel (\varepsilon_M) \to H(M) \to 0
   \]
   and then to
   \[
   0 \to \Kernel (\varepsilon_M) \to M \to \Image (\varepsilon_M) \to 0,
   \]
   produces a projective resolution
   \[
   0 \to P_l \xrightarrow{\partial_l} P_{l-1} \to \cdots \to P_1 \xrightarrow{\partial_1} P_0 \xrightarrow{\partial_0} M \to 0
   \]
   of $M$ in $\modules \Lambda$, in which $P_i = X_i \oplus Y_i \oplus X_i$. By construction, equipping each $P_i$ with the endomorphism
   \[
   \hat{\varepsilon}_i = \begin{pmatrix}
   0 & 0 & 0 \\
   0 & 0 & 0 \\
   1_{X_i} & 0 & 0 \\
      \end{pmatrix}
   \]
   turns the latter resolution into a sequence of $\Lambda[\varepsilon]$-linear morphisms. Introducing the sign $\varepsilon_i = (-1)^i \hat{\varepsilon}_i$ results in the diagram
   \begin{equation}\label{widediagram}
   \begin{tikzpicture}[baseline=(current bounding box.center)]
      \matrix(a)[matrix of math nodes,
      row sep=2em, column sep=2em,
      text height=1.5ex, text depth=0.25ex]
      {0 & P_l & P_{l-1} &\cdots & P_1 & P_0 & M & 0 \\
      0 & P_l & P_{l-1} &\cdots & P_1 & P_0 & M & 0 \\};
      \path[->,font=\scriptsize](a-1-1) edge (a-1-2)
      (a-1-2) edge node[above]{$\partial_l$}(a-1-3)
      (a-1-3) edge (a-1-4)
      (a-1-4) edge (a-1-5)
      (a-1-5) edge node[above]{$\partial_1$}(a-1-6)
      (a-1-6) edge node[above]{$\partial_0$}(a-1-7)
      (a-1-7) edge (a-1-8)

      (a-2-1) edge (a-2-2)
      (a-2-2) edge node[above]{$\partial_l$}(a-2-3)
      (a-2-3) edge (a-2-4)
      (a-2-4) edge (a-2-5)
      (a-2-5) edge node[above]{$\partial_1$}(a-2-6)
      (a-2-6) edge node[above]{$\partial_0$}(a-2-7)
      (a-2-7) edge (a-2-8)

      (a-1-2) edge node[right]{$\varepsilon_l$}(a-2-2)
      (a-1-3) edge node[right]{$\varepsilon_{l-1}$}(a-2-3)
      (a-1-5) edge node[right]{$\varepsilon_1$}(a-2-5)
      (a-1-6) edge node[right]{$\varepsilon_0$}(a-2-6)
      (a-1-7) edge node[right]{$\varepsilon_M$}(a-2-7);
   \end{tikzpicture}
   \end{equation}
   in which the rows are exact and each square is anti-commutative, except for the rightmost one, which is commutative. With respect to the indicated decomposition of the $\Lambda$-module $\p M = P_l \oplus P_{l-1} \oplus \cdots \oplus P_0$, define
   \[
   \varepsilon_{\p M} = \begin{pmatrix}
   \varepsilon_l & 0 & 0 & \cdots & 0 & 0 \\
   \partial_l & \varepsilon_{l-1} & 0 & \cdots &0 & 0 \\
   0 & \partial_{l-1} & \varepsilon_{l-2} & \cdots & 0 & 0 \\
   \vdots & \vdots & \vdots & \ddots & \vdots &  \vdots \\
   0 & 0 & 0 & \cdots & \varepsilon_1 & 0 \\
   0 & 0 & 0 & \cdots & \partial_1 & \varepsilon_0 \\
   \end{pmatrix}.
   \]
   Now $\varepsilon_{\p M}$ squares to zero because of the change of sign, and the assignment
   \[
   (p_l, p_{l-1} ,\dots, p_0) \mapsto \partial_0(p_0)
   \]
   defines a $\Lambda[\varepsilon]$-linear $f\colon (\p M, \varepsilon_{\p M}) \to (M, \varepsilon_M)$ by commutativity of the rightmost square in (\ref{widediagram}). Keeping this diagram in mind, it is straightforward to verify that $f$ is a quasi-isomorphism, which suffices since $\varepsilon_{\p M}$ is of the required form.
   \end{proof}

We say that a differential $\Lambda$-module $K$ is \textit{homotopically projective} if
\[
\Hom_{\K1(\Lambda)}(K,N)=0
\]
for each acyclic $N$. The class of homotopically projectives is closed under extensions and hence constitutes a triangulated subcategory
\[
\Khp(\modules \Lambda) \subset \K1(\modules \Lambda).
\]
It is easy to check that each $(Q,0)$ with $Q \in \proj \Lambda$ is homotopically projective, from which it follows that the homotopically projectives encompass the differential modules admitting projective flags. Indeed, each object in the latter class can be obtained as an iterated extension of differential modules with underlying projective module and vanishing differential. To show this, note that the differential $\varepsilon_P$ in (\ref{strictlylowertriangularmatrix}) on the underlying $P = P_l \oplus P_{l-1} \oplus \dots \oplus P_0$ restricts to a differential, also denoted by $\varepsilon_P$, on each summand of $P$ of the form $P_i \oplus P_{i-1} \oplus\cdots\oplus P_0$ for $i = 0, \dots , l$. Moreover, these differentials fit in a filtration
\begin{align} \label{filtration}
(P_0, 0) \subset (P_1 \oplus P_0, \varepsilon_P) \subset \cdots \subset (P_{l-1} \oplus \cdots \oplus P_0, \varepsilon_P) \subset (P, \varepsilon_P)
\end{align}
in $\C1(\modules \Lambda)$ with the property that each filtration factor has vanishing differential. Hence, iterating from the canonical conflation
\[
0 \to (P_0,0) \to (P_1 \oplus P_0, \varepsilon_P) \to (P_1, 0) \to 0
\]
we reach each term of (\ref{filtration}), including $(P, \varepsilon_P)$ itself, as an extension
\[
0 \to (P_{i-1} \oplus\cdots\oplus P_0, \varepsilon_P) \to (P_i \oplus\cdots\oplus P_0, \varepsilon_P) \to (P_i,0) \to 0.
\]

\begin{lemma} \label{localizationrestrictstofullyfaithful}
   If $P$ is a homotopically projective differential $\Lambda$-module, then the localization functor induces an isomorphism
   \[
   \Hom_{\K1(\Lambda)}(P, M) \cong \Hom_{\D1(\Lambda)}(P,M)
   \]
   for each $M$.
\end{lemma}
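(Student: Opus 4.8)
The plan is to run the standard argument showing that a homotopically projective object cannot tell the homotopy category from the derived category, tailored to the present setting where the calculus of roofs in $\D1(\Lambda)$ recalled above is at our disposal. Write $q\colon \K1(\Lambda)\to\D1(\Lambda)$ for the localization functor. The one structural fact that drives everything is that \emph{the cone of a quasi-isomorphism is acyclic}: this is immediate from the long exact homology sequence attached to the standard triangle, together with the fact that the suspension $\Sigma$ leaves homology unchanged. Combined with the very definition of homotopic projectivity, this gives the completion principle I will use repeatedly: for a homotopically projective $K$, any quasi-isomorphism $s$ has acyclic cone $A$ (as do all its shifts), so applying $\Hom_{\K1(\Lambda)}(K,-)$ to a distinguished triangle containing $s$ and invoking $\Hom_{\K1(\Lambda)}(K,A)=0=\Hom_{\K1(\Lambda)}(K,\Sigma^{\pm 1}A)$ shows that post-composition with $s$ is a bijection on morphisms out of $K$.

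\textbf{Surjectivity of $q$ on $\Hom(P,M)$.} I would represent a morphism $P\to M$ in $\D1(\Lambda)$ by a roof $P\xleftarrow{s}M'\xrightarrow{f}M$ with $s$ a quasi-isomorphism, complete $s$ to a distinguished triangle $M'\xrightarrow{s}P\to A\to\Sigma M'$ with $A$ acyclic, and apply the completion principle with $K=P$ to obtain the unique $g\in\Hom_{\K1(\Lambda)}(P,M')$ with $sg=1_P$ in $\K1(\Lambda)$. Since $q(s)$ is invertible, $q(g)=q(s)^{-1}$, and hence the given roof equals $q(fg)$; thus $q$ is surjective on $\Hom(P,M)$.

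\textbf{Injectivity of $q$ on $\Hom(P,M)$.} Suppose $f\in\Hom_{\K1(\Lambda)}(P,M)$ has $q(f)=0$. Spelling out the equivalence of roofs, this means $fs=0$ in $\K1(\Lambda)$ for some quasi-isomorphism $s\colon N\to P$. Using Lemma \ref{htpresolutionsexist} I would precompose $s$ with a quasi-isomorphism onto $N$ from a differential module admitting a projective flag, so that without loss of generality $s$ is a quasi-isomorphism \emph{between} homotopically projective modules. Applying the completion principle once with $K=P$ produces $g\colon P\to N$ with $sg=1_P$ in $\K1(\Lambda)$, and applying it once with $K=N$ (so that post-composition with $s$ is injective on morphisms out of $N$) upgrades this to $gs=1_N$, since $s(gs)=s=s1_N$. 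Hence $s$ is invertible in $\K1(\Lambda)$ and $f=(fs)s^{-1}=0$ there.

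\textbf{Main obstacle.} The mathematical content is entirely carried by the completion principle and the homotopic projectivity hypothesis; the only point requiring genuine care is the fraction-calculus bookkeeping — in particular pinning down that an $f$ annihilated by $q$ is annihilated after \emph{pre}-composition with a quasi-isomorphism into the source (not post-composition with one out of the target), and then cleanly trading that source for a homotopically projective one via Lemma \ref{htpresolutionsexist}. This is routine once one recalls that the quasi-isomorphisms form a multiplicative system compatible with the triangulation, so no new ideas are needed, but it is the step most prone to slips in the direction of an arrow.
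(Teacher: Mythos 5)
Your argument is correct, and for the surjectivity half it is essentially the paper's own proof: you complete the quasi-isomorphism $s\colon M'\to P$ to a triangle with acyclic cone, use $\Hom_{\K1(\Lambda)}(P,A)=0=\Hom_{\K1(\Lambda)}(P,\Sigma^{\pm1}A)$ to split $s$ up to homotopy, and conclude that the given roof equals the image of $fg$. For injectivity the paper takes a shorter route: a morphism annihilated by the localization functor factors in $\K1(\modules\Lambda)$ through an acyclic object, and homotopic projectivity of $P$ kills it at once. Your variant, starting from $fs=0$ for some quasi-isomorphism $s\colon N\to P$, is also valid (quasi-isomorphisms admit a calculus of fractions on both sides, so this characterization of vanishing is available), but the detour through Lemma \ref{htpresolutionsexist} and the invertibility of $s$ in $\K1(\modules\Lambda)$ is unnecessary: your own completion principle applied with $K=P$ already produces $g$ with $sg=1_P$ in $\K1(\modules\Lambda)$, whence $f=f(sg)=(fs)g=0$ with no hypothesis on $N$ at all. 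So nothing is missing; the injectivity half is simply longer than it needs to be, and it quietly uses (as you may wish to note) that modules admitting projective flags are homotopically projective, which the paper establishes just before this lemma, so there is no circularity.
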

\begin{proof}
   First, observe that if $f \in \Hom_{\K1(\Lambda)}(P,M)$ maps to the zero morphism in $\D1(\modules \Lambda)$, then it factors through some acyclic $N$ and hence vanishes already in $\K1(\modules \Lambda)$, since $\Hom_{\K1(\Lambda)}(P, N)=0$. Second, in the calculus of roofs, a morphism $P \to M$ in $\D1(\modules \Lambda)$ is represented by a diagram
   \begin{center}
   \begin{tikzpicture}
         \matrix(a)[matrix of math nodes,
         row sep=1em, column sep=1em,
         text height=1.5ex, text depth=0.25ex]
         {& X & \\
         P && M \\};
         \path[->,font=\scriptsize](a-1-2) edge node[pos=0.7,above]{$q$} (a-2-1);
         \path[->,font=\scriptsize](a-1-2) edge node[pos=0.7,above]{$f$} (a-2-3);
   \end{tikzpicture}
   \end{center}
   with $q$ a quasi-isomorphism. In the triangle
   \[
   X \xrightarrow q P \to C_q \to \Sigma X,
   \]
   the middle morphism must be zero, as $C_q$ is acyclic. Thus $q$ is a split epimorphism, and there is some $\hat{q}$ ensuring commutativity of both squares in
   \begin{center}
   \begin{tikzpicture}
   \matrix(a)[matrix of math nodes,
   row sep=1em, column sep=1em,
   text height=1.5ex, text depth=0.25ex]
   {   &   & P &   &   \\
       & X &   & P &   \\
     P &   &   &   & M . \\};
   \path[font=\scriptsize]
       (a-1-3) edge [->] node[pos=0.7,above]{$\hat{q}$} (a-2-2) edge [->] node[pos=0.7,above]{$1$} (a-2-4)
       (a-2-4) edge [->] node[below]{$1$} (a-3-1)
               edge [->] node[pos=0.7,above]{$f\hat{q}$} (a-3-5)
       (a-2-2) edge [-,line width=4pt,draw=white] (a-3-5) edge [->] node[below]{$f$} (a-3-5)
               edge [->] node[pos=0.7,above]{$q$} (a-3-1);
   \end{tikzpicture}
   \end{center}
   Hence the roofs
   \begin{center}
   \begin{tikzpicture}
   \matrix(a)[matrix of math nodes,
   row sep=1em, column sep=1em,
   text height=1.5ex, text depth=0.25ex]
   {& X & \\
   P && M \\};

   \draw(1.8,0) node{and};

   \path[->,font=\scriptsize](a-1-2) edge node[pos=0.7,above]{$q$} (a-2-1);
   \path[->,font=\scriptsize](a-1-2) edge node[pos=0.7,above]{$f$} (a-2-3);
   \end{tikzpicture}
   \begin{tikzpicture}
   \matrix(a)[matrix of math nodes,
   row sep=1em, column sep=1em,
   text height=1.5ex, text depth=0.25ex]
   {& P & \\
   P && M \\};
   \path[->,font=\scriptsize](a-1-2) edge node[pos=0.7,above]{$1$} (a-2-1);
   \path[->,font=\scriptsize](a-1-2) edge node[pos=0.7,above]{$\,\, f\hat{q}$} (a-2-3);
   \end{tikzpicture}
   \end{center}
   are equivalent, and it follows that the left hand roof lies in the image of the localization functor, since the right hand one clearly does.
\end{proof}

Note that the incredibly convenient attribute of homotopically projectives expressed in Lemma \ref{localizationrestrictstofullyfaithful} is a $1$-periodic analog of the essential property enjoyed by projective resolutions of differential complexes. Lemma \ref{htpresolutionsexist} hence indicates that, up to homotopy, the class of differential modules admitting a projective flag should coincide with the class of homotopically projectives.

\begin{proposition} \label{htprojisprojflag}
   The class of homotopically projectives in $\K1(\modules \Lambda)$ is, up to isomorphism, precisely the class of differential modules admitting projective flags. Moreover, restriction of the localization functor gives a triangle equivalence
   \[
   \Khp(\modules \Lambda) \cong \D1(\modules \Lambda).
   \]
\end{proposition}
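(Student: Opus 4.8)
The plan is to establish the two assertions of Proposition \ref{htprojisprojflag} in turn, leaning on the machinery already assembled. First I would prove that every homotopically projective differential module is, up to isomorphism in $\K1(\modules \Lambda)$, one admitting a projective flag. The inclusion in one direction is already noted in the text: the filtration \eqref{filtration} exhibits any projective-flag module as an iterated extension of modules $(Q,0)$ with $Q \in \proj \Lambda$, each of which is homotopically projective, and the homotopically projectives are closed under extensions. For the reverse inclusion, let $P$ be homotopically projective. By Lemma \ref{htpresolutionsexist} there is a quasi-isomorphism $f\colon P' \to P$ (or $P \to P'$ — take the direction that Lemma \ref{htpresolutionsexist} supplies) with $P'$ admitting a projective flag. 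Since $P'$ is then homotopically projective by the first inclusion, Lemma \ref{localizationrestrictstofullyfaithful} applies to both $P$ and $P'$: the map $f$, being a quasi-isomorphism, is an isomorphism in $\D1(\modules \Lambda)$, and under the identifications $\Hom_{\K1}(P',-) \cong \Hom_{\D1}(P',-)$ and $\Hom_{\K1}(P,-) \cong \Hom_{\D1}(P,-)$ its homotopy class is seen to be an isomorphism in $\K1(\modules \Lambda)$ — one checks that the inverse in $\D1$ is represented by an honest homotopy class and that the two composites are the respective identities, again via Lemma \ref{localizationrestrictstofullyfaithful}. Hence $P \cong P'$ in $\K1(\modules \Lambda)$, as desired.

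For the second assertion, I would show that the localization functor $L\colon \K1(\modules \Lambda) \to \D1(\modules \Lambda)$ restricts to a triangle equivalence on $\Khp(\modules \Lambda)$. That $L$ is a triangle functor is already recorded, and its restriction to the triangulated subcategory $\Khp(\modules \Lambda)$ remains one. Full faithfulness of the restriction is exactly Lemma \ref{localizationrestrictstofullyfaithful}, which gives $\Hom_{\K1}(P,M) \cong \Hom_{\D1}(P,M)$ for $P$ homotopically projective and arbitrary $M$, in particular for $M$ homotopically projective. Density is the remaining point: given any differential module $M$, Lemma \ref{htpresolutionsexist} provides a projective-flag module $P'$ quasi-isomorphic to $M$, and by the first part of the proposition $P'$ is homotopically projective; thus $L(P') \cong M$ in $\D1(\modules \Lambda)$, so every object of $\D1(\modules \Lambda)$ lies in the essential image. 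A fully faithful, dense triangle functor is an equivalence, completing the proof.

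The main obstacle I anticipate is the bookkeeping in the first part — promoting a quasi-isomorphism between two homotopically projective modules to an isomorphism in the homotopy category. The clean way is to invoke Lemma \ref{localizationrestrictstofullyfaithful} symmetrically: since $L$ is fully faithful on homotopically projectives, and $L(f)$ is invertible in $\D1(\modules \Lambda)$, the morphism $f$ already has a two-sided inverse in $\K1(\modules \Lambda)$, because $\Hom$-groups between homotopically projectives are unchanged by $L$ and the identity morphisms are detected there. One should be mildly careful that Lemma \ref{htpresolutionsexist} is applied in the correct variance (it produces $f\colon P' \to M$ with $P'$ a projective-flag module and $f$ a quasi-isomorphism), but once that is fixed the argument is formal. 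Everything else — closure of homotopically projectives under extensions, the filtration \eqref{filtration}, the triangle-functor property of $L$ — is either explicitly in the excerpt or immediate.
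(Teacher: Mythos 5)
Your proposal is correct and follows essentially the same route as the paper: the easy inclusion via the filtration \eqref{filtration}, the converse by combining Lemma \ref{htpresolutionsexist} with Lemma \ref{localizationrestrictstofullyfaithful} to promote the quasi-isomorphism $\p K \to K$ to an isomorphism in $\K1(\modules \Lambda)$, and the equivalence with density given by Lemma \ref{htpresolutionsexist} and full faithfulness by Lemma \ref{localizationrestrictstofullyfaithful}. The only difference is that you spell out the two-sided-inverse bookkeeping that the paper leaves implicit, which is harmless.
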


\begin{proof}
   We have already seen how the differential modules admitting projective flags are all homotopically projective. Conversely, by Lemma \ref{htpresolutionsexist} each differential module $K$ allows a quasi-isomorphism $\p K \to K$ where $\p K$ admits a projective flag, and by Lemma \ref{localizationrestrictstofullyfaithful} this will be invertible already in $\K1(\modules \Lambda)$ whenever $K$ is homotopically projective. The last claim is clear, as density of the restricted localization functor is Lemma \ref{htpresolutionsexist}, while full faithfulness is Lemma \ref{localizationrestrictstofullyfaithful}.
\end{proof}

\begin{remark}
   With no restriction on the global dimension of $\Lambda$, it is clear how to devise homotopically projective resolutions of differential $\Lambda$-modules whose underlying modules are not necessarily finitely generated. Indeed, using the idea of the proof of Lemma \ref{htpresolutionsexist}, the resolutions will appear as colimits of possibly infinite systems
   \[
   P_0 \to P_1 \to P_2 \ \to \cdots
   \]
   of differential modules in which each map is split mono over $\Lambda$ and each quotient $P_{i+1}/P_i$ has vanishing differential and underlying projective module. Plainly, as in the filtration (\ref{filtration}), each $P_i$ is homotopically projective, and so it follows from
   \[
   \Hom_{\K1(\Lambda)}(\colim P_i, N) \cong \limit \Hom_{\K1(\Lambda)}(P_i, N)
   \]
   that also the colimit itself satisfies the required vanishing condition. Hence, in analogy to the unbounded resolutions of differential complexes of \cite{MR1649844}, there is a triangle equivalence $\D1(\Modules \Lambda) \cong \Khp(\Modules \Lambda)$.
\end{remark}

Let us denote by $\K1(\proj \Lambda)$ the triangulated subcategory of $\K1(\modules \Lambda)$ consisting of \textit{relatively projectives}, i.e.\ differential modules whose underlying modules are projective. In a sense, the most convenient scenario imaginable, and also the most striking analogy to the context of differential complexes and their resolutions, is that the relatively projectives are all homotopically projective. A priori, this would be just as surprising as it would be beneficial. For instance, let $G$ be the quiver
\begin{center}
   \begin{tikzpicture}
      \matrix(a)[matrix of math nodes,
      row sep=2em, column sep=2em,
      text height=1.5ex, text depth=0ex]
      {1 & 2 & 3 \\};
      \path[->,font=\scriptsize](a-1-1) edge node[above]{$\alpha$}(a-1-2)
      (a-1-2) edge node[above]{$\beta$}(a-1-3);
      \path[->,font=\scriptsize](a-1-3) edge[in=315, out=225, looseness = .3] node[below]{$\gamma$} (a-1-1);
   \end{tikzpicture}
\end{center}
and consider the algebra $\mathbb k G/(\beta \alpha)$. Then, denoting by $P_i$ the indecomposable projective corresponding to vertex $i$, it might seem unreasonable to expect that the relatively projective differential module $(P_2, \alpha \gamma \beta)$ admits a projective flag, even in the $1$-periodic homotopy category. We nevertheless have the following.

\begin{proposition} \label{htprojisrelproj}
   The categories $\Khp(\modules \Lambda)$ and $\K1(\proj \Lambda)$ coincide.
\end{proposition}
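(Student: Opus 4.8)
The plan is to lean on Proposition \ref{htprojisprojflag}, which identifies the homotopically projectives (up to isomorphism in $\K1(\modules \Lambda)$) with the differential modules carrying a projective flag. Since such a module has projective underlying module, this gives at once the inclusion $\Khp(\modules \Lambda) \subseteq \K1(\proj \Lambda)$. For the reverse inclusion, fix a relatively projective $(P, \varepsilon_P)$. By Lemma \ref{htpresolutionsexist} there is a quasi-isomorphism $f \colon (\p P, \varepsilon_{\p P}) \to (P, \varepsilon_P)$ with $\p P$ admitting a projective flag, and inspecting the construction in that proof the underlying module of $\p P$ is a sum of terms $X_i \oplus Y_i \oplus X_i$ with $X_i, Y_i \in \proj \Lambda$, so $\p P$ is relatively projective as well. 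It then suffices to show that $f$ is a homotopy equivalence, since afterwards $(P, \varepsilon_P) \cong (\p P, \varepsilon_{\p P})$ in $\K1(\modules \Lambda)$, the latter being homotopically projective. In the triangle $(\p P, \varepsilon_{\p P}) \xrightarrow{f} (P, \varepsilon_P) \to C_f \to \Sigma(\p P, \varepsilon_{\p P})$ the mapping cone $C_f$ has underlying module $\p P \oplus P$, hence is relatively projective, and since $H \colon \K1(\modules \Lambda) \to \modules \Lambda$ is homological with $H \circ \Sigma = H$ and $H(f)$ invertible, the long exact homology sequence forces $C_f$ to be acyclic. Thus the proposition reduces to the claim that an acyclic relatively projective differential module is contractible.

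To prove that claim, let $(Q, \delta)$ be acyclic and relatively projective, put $B = \Image \delta$, and note that acyclicity reads $\Kernel \delta = \Image \delta = B$. Writing $\bar\delta$ for $\delta$ with codomain restricted to $B$, the periodic complex
\[
\cdots \xrightarrow{\delta} Q \xrightarrow{\delta} Q \xrightarrow{\bar\delta} B \to 0
\]
is exact, because exactness at each $Q$ is precisely $\Kernel \delta = \Image \delta$ (using $\Kernel \bar\delta = \Kernel \delta$). This is therefore a projective resolution of $B$ in $\modules \Lambda$ in which every syzygy equals $B$, regarded as the submodule $\Kernel \delta$ of $Q$. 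As $\Lambda$ has finite global dimension $d$, one has $\operatorname{pd}_\Lambda B \le d$, so a high enough syzygy of $B$ is projective; since they are all equal to $B$, the module $B$ is projective. Hence the short exact sequence $0 \to B \to Q \xrightarrow{\bar\delta} B \to 0$ splits, and a section $j$ of $\bar\delta$ identifies $(Q, \delta)$ with $\bigl(B \oplus B, \bigl(\begin{smallmatrix} 0 & 1_B \\ 0 & 0 \end{smallmatrix}\bigr)\bigr)$. Up to interchanging the two summands this is the standard projective--injective object on $B$ from Subsection \ref{triangulatedstructure}; concretely, $s = \bigl(\begin{smallmatrix} 0 & 0 \\ 1_B & 0 \end{smallmatrix}\bigr)$ satisfies $s\delta + \delta s = 1$, so $1_{(Q,\delta)}$ is null-homotopic and $(Q,\delta)$ is contractible.

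The main obstacle is the claim, and within it the single observation that acyclicity turns $\delta$ into its own syzygy operator, which is what lets finite global dimension be brought to bear to conclude that $\Image \delta$ is projective; everything else is routine bookkeeping with mapping cones and the resolution machinery already in place. The $n$-periodic case is entirely parallel: an acyclic $n$-periodic complex of projectives yields a (now $n$-step-periodic) projective resolution of each cycle module which reproduces that module after $n$ syzygies, so the same finiteness argument shows the cycle modules are projective, the defining sequences split, and the complex decomposes into standard contractible $n$-periodic projective--injectives.
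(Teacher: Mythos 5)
Your argument is correct, but it takes a genuinely different route from the paper. The paper's converse direction is constructive: it resolves $\Image(\varepsilon)$ by a finite complex $P$ of projectives ending in the underlying module itself, adds the contractible cone $C_{-1_{\Delta P}}$, and then writes down an explicit conjugating isomorphism (built from a null-homotopy of a lift of the zero map) onto a differential module whose differential is visibly of projective-flag form. You instead resolve the relatively projective module via Lemma \ref{htpresolutionsexist}, observe that the mapping cone of the resolution map is an acyclic object with projective underlying module (using that $H$ is homological and $H\Sigma = H$, both available from Subsection \ref{triangulatedstructure}), and then prove the key reduction that an acyclic relatively projective differential module is contractible: acyclicity makes $B=\Image\delta=\Kernel\delta$ its own syzygy in the periodic resolution $\cdots \to Q \to Q \to B \to 0$, so finite global dimension forces $B$ to be projective, the extension $0 \to B \to Q \to B \to 0$ splits, and $(Q,\delta)$ becomes the standard projective-injective on $B$, hence zero in $\K1(\modules \Lambda)$; the quasi-isomorphism is then a homotopy equivalence and homotopical projectivity transfers across it. Both arguments use the standing finite global dimension hypothesis in an essential way. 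What the paper's computation buys is an explicit flag at the level of $\C1(\modules \Lambda)$ (which feeds directly into the worked example following the proposition); what your argument buys is brevity and a reusable structural fact --- the $1$-periodic analogue of the classical statement that acyclic complexes of projectives over an algebra of finite global dimension are contractible --- which also makes the $n$-periodic generalization immediate, exactly as you indicate.
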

\begin{proof}
   Each homotopically projective in $\K1(\modules \Lambda)$ is isomorphic to a differential module admitting a projective flag, and so is relatively projective. Conversely, let $(P_0, \varepsilon)$ be relatively projective. Observe that $\Image (\varepsilon)$ admits a projective resolution
   \[
   P = 0 \to P_l \xrightarrow{\partial_l} P_{l-1} \to \cdots \to P_1 \xrightarrow{\partial_1} P_0 \to 0,
   \]
   obtained by splicing a finite resolution of $\Kernel (\varepsilon)$ with its inclusion into $P_0$. Let $(\Delta P, \varepsilon_{\Delta P})$ be the compression of $P$. The cone
   \[
   C_{-1_{\Delta P}} = \Biggl(\Delta P \oplus \Delta P, \begin{pmatrix} -\varepsilon_{\Delta P} & 0 \\ -1_{\Delta P} & \varepsilon_{\Delta P} \end{pmatrix} \Biggr)
   \]
   vanishes in the homotopy category, and it therefore suffices to show that
   \[
   \bigl( Q, \varepsilon_Q \bigr) = \bigl(P_0, \varepsilon\bigr) \oplus C_{-1_{\Delta P}}
   = \Biggl(\Delta P \oplus P_0 \oplus \Delta P, \begin{pmatrix} -\varepsilon_{\Delta P} & 0 & 0 \\ 0 & \varepsilon & 0  \\ -1_{\Delta P} & 0 & \varepsilon_{\Delta P} \end{pmatrix} \Biggr)
   \]
   is isomorphic to a differential module admitting a projective flag. To this end,
   \begin{center}
   \begin{tikzpicture}[baseline=(current bounding box.center)]
         \matrix(a)[matrix of math nodes,
         row sep=2em, column sep=2em,
         text height=1.5ex, text depth=0.25ex]
         {0 & P_l & P_{l-1} &\cdots & P_1 & P_0 & \Image (\varepsilon_Q) & 0 \\
         0 & P_l & P_{l-1} &\cdots & P_1 & P_0 & \Image (\varepsilon_Q) & 0 \\};
         \path[->,font=\scriptsize](a-1-1) edge (a-1-2)
         (a-1-2) edge node[above]{$\partial_l$}(a-1-3)
         (a-1-3) edge (a-1-4)
         (a-1-4) edge (a-1-5)
         (a-1-5) edge node[above]{$\partial_1$}(a-1-6)
         (a-1-6) edge node[above]{$\varepsilon$}(a-1-7)
         (a-1-7) edge (a-1-8)

         (a-2-1) edge (a-2-2)
         (a-2-2) edge node[above]{$\partial_l$}(a-2-3)
         (a-2-3) edge (a-2-4)
         (a-2-4) edge (a-2-5)
         (a-2-5) edge node[above]{$\partial_1$}(a-2-6)
         (a-2-6) edge node[above]{$\varepsilon$}(a-2-7)
         (a-2-7) edge (a-2-8)

         (a-1-7) edge node[right]{$0$}(a-2-7)
         (a-1-6) edge node[right]{$\varepsilon$}(a-2-6)
         (a-1-5) edge node[right]{$0$}(a-2-5)
         (a-1-3) edge node[right]{$0$}(a-2-3)
         (a-1-2) edge node[right]{$0$}(a-2-2);
   \end{tikzpicture}
   \end{center}
   is a pivotal diagram. Indeed, its commutativity reveals that the indicated chain map $P \to P$ is a lift of the zero endomorphism on $\Image(\varepsilon_Q)$, and must therefore be null-homotopic. This means there are $s_i \colon P_i \to P_{i+1}$ for $0 \leq i \leq l-1$ such that $\partial_1 s_0 = \varepsilon$, $\partial_{i+1} s_i + s_{i-1} \partial_i = 0$ for $1 \leq i \leq l-1$ and $s_{l-1} \partial_l = 0$. Denoting by $e \colon \Delta P \to P_0$ and $m \colon P_0 \to \Delta P$ the canonical split epi- and monomorphism, respectively, and letting $h_i = (-1)^i s_i$, it is clear that the latter assemble to $h \colon \Delta P \to \Delta P$ such that
   \begin{equation} \label{equationhomotopywithsign}
   m \varepsilon e = \varepsilon_{\Delta P} h - h \varepsilon_{\Delta P}.
   \end{equation}
   Miraculously, it turns out that $(Q, \varepsilon_Q)$ is isomorphic to
   \[
   \bigl( Q', \varepsilon_{Q'} \bigr)
   = \Biggl(\Delta P \oplus P_0 \oplus \Delta P, \begin{pmatrix} -\varepsilon_{\Delta P} & 0 & 0 \\ - \varepsilon e & 0 & 0  \\ -(1_{\Delta P}+h) & -m & \varepsilon_{\Delta P} \end{pmatrix} \Biggr),
   \]
   whose differential is clearly of the required lower triangular form. Indeed,
   \[
   f = \begin{pmatrix} 1_{\Delta P}+h & m & 0 \\ e & 0 & -e \varepsilon_{\Delta P}  \\ 0 & 0 & 1_{\Delta P} \end{pmatrix}
   \]
   gives an isomorphism $(Q', \varepsilon_{Q'}) \to (Q, \varepsilon_Q)$. The reader is invited to check that $\varepsilon_Q f = f \varepsilon_{Q'}$, using (\ref{equationhomotopywithsign}) together with the obvious equalities
   \[
   \varepsilon_{\Delta P}m = 0, \enspace \varepsilon e \varepsilon_{\Delta P} =0 = eh \enspace \text{and} \enspace em=1_{P_0}. \qedhere
   \]
\end{proof}

\begin{example}
   Let us revisit the algebra $\mathbb k G/(\beta \alpha)$ with the relatively projective differential module $(P_2, \varepsilon = \alpha\gamma\beta)$, discussed just prior to Proposition \ref{htprojisrelproj}. The image of $\varepsilon$ is the simple module $S_2$, so the relevant diagram is
   \begin{center}
   \begin{tikzpicture}[baseline=(current bounding box.center)]
         \matrix(a)[matrix of math nodes,
         row sep=2em, column sep=2em,
         text height=1.5ex, text depth=0.25ex]
         {0 & P_1 & P_2 & S_2 & 0 \\
          0 & P_1 & P_2 & S_2 & 0. \\};
         \path[->,font=\scriptsize](a-1-1) edge (a-1-2)
         (a-1-2) edge node[above]{$\alpha$}(a-1-3)
         (a-1-3) edge node[above]{$\varepsilon$}(a-1-4)
         (a-1-4) edge (a-1-5)

         (a-2-1) edge (a-2-2)
         (a-2-2) edge node[above]{$\alpha$}(a-2-3)
         (a-2-3) edge node[above]{$\varepsilon$}(a-2-4)
         (a-2-4) edge (a-2-5)

         (a-1-4) edge node[right]{$0$}(a-2-4)
         (a-1-3) edge node[right]{$\varepsilon$}(a-2-3)
         (a-1-2) edge node[right]{$0$}(a-2-2);
   \end{tikzpicture}
   \end{center}
   In the notation of the proof of Proposition \ref{htprojisrelproj}, $(P_2, \varepsilon)$ is hence a summand, with null-homotopic complement, of the homotopically projective differential module
   \[
   \bigl( Q', \varepsilon_{Q'} \bigr) = \Biggl(P_1 \oplus P_2 \oplus P_2 \oplus P_1 \oplus P_2, \begin{pmatrix}
      0 & 0 & 0 & 0 & 0 \\ - \alpha & 0 & 0 & 0 & 0 \\ 0 & - \varepsilon & 0 & 0 & 0 \\ -1_{P_1} & -\gamma \beta & 0 & 0 & 0 \\ 0 & -1_{P_2} & -1_{P_2} & \alpha & 0 \\
   \end{pmatrix}\Biggr).
   \]
   We moreover know that $\varepsilon_{Q'}$ is obtained by conjugation of the differential of
   \[
   \bigl(Q, \varepsilon_Q\bigr) = \Biggl( P_1 \oplus P_2 \oplus P_2 \oplus P_1 \oplus P_2,
   \begin{pmatrix}
      0 & 0 & 0 & 0 & 0 \\ - \alpha & 0 & 0 & 0 & 0 \\ 0 & 0 & \varepsilon & 0 & 0 \\ -1_{P_1} & 0 & 0 & 0 & 0 \\ 0 & -1_{P_2} & 0 & \alpha & 0 \\
   \end{pmatrix}\Biggr),
   \]
   and the reader may verify this by checking that  $f^{-1} \varepsilon_Q f = \varepsilon_{Q'}$ for
   \[
   f = \begin{pmatrix}
      1_{P_1} & \gamma \beta & 0 & 0 & 0 \\ 0 & 1_{P_2} & 1_{P_2} & 0 & 0 \\ 0 & 1_{P_2} & 0 & -\alpha & 0 \\ 0 & 0 & 0 & 1_{P_1} & 0 \\ 0 & 0 & 0 & 0 & 1_{P_2} \\
   \end{pmatrix} \enspace \text{with} \enspace
   f^{-1} = \begin{pmatrix}
      1_{P_1} & 0 & - \gamma \beta & 0 & 0 \\ 0 & 0 & 1_{P_2} & \alpha & 0 \\ 0 & 1_{P_2} & -1_{P_2} & -\alpha & 0 \\ 0 & 0 & 0 & 1_{P_1} & 0 \\ 0 & 0 & 0 & 0 & 1_{P_2} \\
   \end{pmatrix}.
   \]
\end{example}

\begin{remark}
   The results of the current subsection clearly extend beyond the case $n=1$. In particular combining counterparts of Proposition \ref{htprojisprojflag} and Proposition \ref{htprojisrelproj} yields
   \[
   \Dn(\modules \Lambda) \cong \Kn(\proj \Lambda).
   \]
\end{remark}

\subsection{An embedding of the orbit category} \label{anembeddingoftheorbitcategory}
If $X,Y \in \Db(\modules \Lambda)$ then, since $\Lambda$ is of finite global dimension, we may replace $X$ by a projective resolution to get
\[
\bigoplus_{i \in \amsbb Z}\Hom_{\Db(\Lambda)}(X, \Sigma^i Y) \cong \bigoplus_{i \in \amsbb Z}\Hom_{\Kb(\Lambda)}(X, \Sigma^i Y).
\]
In this case, $\Delta X$ is homotopically projective by Proposition \ref{htprojisrelproj} and hence
\[
\Hom_{\D1(\Lambda)}(\Delta X, \Delta Y) \cong \Hom_{\K1(\Lambda)} (\Delta X, \Delta Y)
\]
by Lemma \ref{localizationrestrictstofullyfaithful}. Further, it is straightforward to write down an isomorphism
\[
\bigoplus_{i \in \amsbb Z}\Hom_{\Kb(\Lambda)}(X, \Sigma^i Y) \to \Hom_{\K1(\Lambda)} (\Delta X, \Delta Y),
\]
as an element of the left hand side is just a sequence $(f_i)_i$ of, up to homotopy, sequences of morphisms $f_i = (f_i^j)_j$ with $f_i^j\colon X^j \to Y^{j+i}$. The above three isomorphisms clearly comprise a proof of the following.

\begin{lemma} \label{lemmacompressionembedding}
   Compression of complexes induces a fully faithful embedding
   \[
   \Delta\colon \Db(\modules \Lambda)/ \Sigma \to \D1(\modules \Lambda).
   \]
   In other words, the orbit category is equivalent to the full subcategory of gradable objects in $\D1(\modules \Lambda)$. \qed
\end{lemma}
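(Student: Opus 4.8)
The plan is to prove full faithfulness by computing the effect of $\Delta$ on morphism spaces, i.e.\ to show that for all $X, Y \in \Db(\modules \Lambda)$ the map
\[
\bigoplus_{i \in \amsbb Z} \Hom_{\Db(\Lambda)}(X, \Sigma^i Y) \longrightarrow \Hom_{\D1(\Lambda)}(\Delta X, \Delta Y)
\]
induced by $\Delta$ is bijective. I would factor this map as a composite of three isomorphisms, threading it through the homotopy categories $\Kb(\modules \Lambda)$ and $\K1(\modules \Lambda)$.

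First, since $\Lambda$ has finite global dimension, one may replace $X$ by a bounded complex of projectives, and then $\Hom_{\Db(\Lambda)}(X, \Sigma^i Y) \cong \Hom_{\Kb(\Lambda)}(X, \Sigma^i Y)$ for every $i$; summing over $i$ gives the first isomorphism. Second, for such $X$ the differential module $\Delta X$ is relatively projective, hence homotopically projective by Proposition \ref{htprojisrelproj}, so Lemma \ref{localizationrestrictstofullyfaithful} identifies $\Hom_{\D1(\Lambda)}(\Delta X, \Delta Y)$ with $\Hom_{\K1(\Lambda)}(\Delta X, \Delta Y)$.

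The combinatorial heart is the third isomorphism $\bigoplus_i \Hom_{\Kb(\Lambda)}(X, \Sigma^i Y) \cong \Hom_{\K1(\Lambda)}(\Delta X, \Delta Y)$, which I would obtain by unwinding definitions. A $\Lambda$-linear map $\bigoplus_j X^j \to \bigoplus_j Y^j$ is given by its matrix entries $g^{j,k}\colon X^j \to Y^k$; sorting these by the displacement $i = k - j$ turns a chain map $\Delta X \to \Delta Y$ into a finite family $(g_i)_i$ with $g_i = (g^{j,j+i})_j$, and commutation with $\varepsilon_{\Delta X} = \bigoplus_j \partial_X^j$ and $\varepsilon_{\Delta Y} = \bigoplus_j \partial_Y^j$ amounts exactly to each $g_i$ being a chain map from $X$ to the $i$-fold shift of $Y$. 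The same bookkeeping applied to a $1$-periodic null-homotopy $s$, i.e.\ to an identity $g = s\varepsilon_{\Delta X} + \varepsilon_{\Delta Y} s$, exhibits its degree-$i$ component as an honest homotopy of $g_i$; boundedness of $X$ and $Y$ makes only finitely many $i$ relevant. It then remains to check that the composite of the three isomorphisms is the map induced by $\Delta$, after which full faithfulness follows and the description of the orbit category as the gradable objects is immediate, the latter being by definition the essential image of $\Delta$.

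The step I expect to cause the most trouble is the sign bookkeeping underlying the third isomorphism: the suspension on $\Cb(\modules \Lambda)$ flips the sign of the differential at each spot, whereas the $1$-periodic suspension sends $(M,\varepsilon_M)$ to $(M,-\varepsilon_M)$, so identifying the $i$-fold shift of $Y$ above with $\Sigma^i Y$ — and hence seeing that $\Delta$ even descends to a functor on $\Db(\modules \Lambda)/\Sigma$ via a natural isomorphism $\Delta \circ \Sigma \cong \Delta$ — requires inserting the alternating-sign automorphism of the underlying complex, and this sign must be tracked consistently through both the chain-map and the homotopy parts of the argument to be sure that the explicit bijection produced is the one coming from $\Delta$.
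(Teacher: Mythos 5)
Your proof is correct and follows essentially the same route as the paper: the same factorization into three isomorphisms through $\Kb(\modules \Lambda)$ and $\K1(\modules \Lambda)$, using finite global dimension for the first, Proposition \ref{htprojisrelproj} together with Lemma \ref{localizationrestrictstofullyfaithful} for the second, and the explicit matrix/displacement bookkeeping (which the paper leaves as "straightforward") for the third. Your extra attention to the sign conventions in the suspension is a welcome elaboration of a point the paper passes over silently, but it does not change the argument.
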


\begin{remark}
   There is a fully faithful embedding, abusively denoted by
   \[
   \Delta \colon \Db(\modules \Lambda)/\Sigma^n \to \Dn(\modules \Lambda),
   \]
   for each positive integer $n$, given by taking a complex
   \[
   0 \to X^0 \to X^1 \to \cdots \to X^{l-1} \to X^l \to 0
   \]
   to the $n$-periodic
   \[
   \cdots \to \bigoplus_{i \equiv 1 \, (n)}X^i \to \bigoplus_{i \equiv 2 \, (n)}X^i \to \cdots \to \bigoplus_{i \equiv n \, (n)}X^i \to \bigoplus_{i \equiv 1 \, (n)}X^i \to \cdots
   \]
   whose differentials, with respect to the obvious decompositions, are matrices of `gradable' shape. The embedding of Lemma \ref{lemmacompressionembedding} is clearly the case $n=1$.
\end{remark}

\section{Main result on the triangulated hull} \label{sectionthehull}
The aim of the current section is to show that the embedding $\Delta$ of Lemma \ref{lemmacompressionembedding}, and more generally the remark following it, is exactly the embedding of the orbit category into its triangulated hull (Theorem \ref{theoremthehull}). For the sake of brevity, we stick to our scheme of providing details only for the case $n=1$.

From here on, $\mathcal B$ denotes the DG category $\mathcal B_{\Sigma} = \per \Lambda/\Sigma$ from Subsection \ref{orbitcategoriesandthetriangulatedhull}, and $\mathcal B_0$ is its full DG subcategory on the single object $\Lambda$ viewed as a stalk complex. For $X, Y \in \mathcal B$, note that each degree of the `mapping complex' $\mathcal B(X,Y)$ is simply $\Hom_{\Lambda}(\Delta X, \Delta Y)$ and that the differential $\mathcal B(X,Y)^i \to \mathcal B(X,Y)^{i+1}$ is given by
\[
f \mapsto \varepsilon_{\Delta Y} f - (-1)^i f \varepsilon_{\Delta X}.
\]
Indeed,
\[
\per \Lambda (X,\Sigma^j Y)^i = \bigoplus_{l \in \amsbb Z} \Hom_{\Lambda}(X^l, Y^{l+j+i}),
\]
so letting $j$ run through the integers we obtain
\[
\mathcal B(X,Y)^i = \bigoplus_{j \in \amsbb Z} \per \Lambda(X, \Sigma^j Y)^i= \Hom_{\Lambda}(\Delta X, \Delta Y)
\]
for each $i \in \amsbb Z$. It is straightforward to check that the differential acts as claimed.

\begin{lemma} \label{equivalenceofdgcategories}
   There is an equivalence of DG categories
   \[
   \dgmodules \mathcal B \cong \dgmodules \mathcal B_0.
   \]
\end{lemma}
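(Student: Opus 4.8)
The plan is to deduce this from the machinery of pre-triangulated hulls developed in Lemma~\ref{comparepretriangulatedhulls}. Since that lemma says that $\dgmodules \dgA$ is determined by $\dgA^{\pretr}$, it suffices to produce a fully faithful DG functor $\mathcal B_0 \hookrightarrow \mathcal B$ which induces an equivalence on pre-triangulated hulls, i.e.\ to show $\mathcal B_0^{\pretr} \cong \mathcal B^{\pretr}$. The inclusion $\mathcal B_0 \hookrightarrow \mathcal B$ is tautologically fully faithful (it is a full DG subcategory), so by Lemma~\ref{fullyfaithfulinducesfullyfaithful} the induced functor $\Ind$ is fully faithful, and by Lemma~\ref{lemmainductionrestrictstopretriangulatedhulls} it restricts to a fully faithful functor $\mathcal B_0^{\pretr} \to \mathcal B^{\pretr}$. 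Hence the only remaining point is \emph{essential surjectivity} of $\mathcal B_0^{\pretr} \to \mathcal B^{\pretr}$, i.e.\ that every object of $\mathcal B$ — which is just a perfect complex $X$ over $\Lambda$ regarded in the orbit category — lies in $\thick_{\mathcal C \mathcal B}(\iota \mathcal B_0)$, the closure under shifts, extensions and summands of the single representable $\Lambda^{\wedge}$.

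First I would unwind what the representable $\Lambda^{\wedge} \in \mathcal C \mathcal B_0$, or rather its image in $\mathcal C \mathcal B$ under induction, actually is. Using the computation recorded just before the lemma, the mapping complex $\mathcal B(\Lambda, Y)$ has degree-$i$ part $\Hom_\Lambda(\Delta\Lambda, \Delta Y) = \Delta Y$ in the appropriate degree, with differential $f \mapsto \varepsilon_{\Delta Y} f - (-1)^i f \varepsilon_{\Delta\Lambda}$; since $\varepsilon_{\Delta\Lambda} = 0$, this identifies $\Ind(\Lambda^{\wedge})$ with (a shift of) the compressed object $\Delta Y$ viewed as a $1$-periodic complex, in a way compatible with Lemma~\ref{dgmodulesaregradedmodules} identifying DG $\mathcal B$-modules with graded modules over the associated graded algebra. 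In short, the representable coming from $\mathcal B_0$ corresponds to a \emph{stalk} perfect complex. So the claim becomes: in $\mathcal C \mathcal B$ (equivalently, in the homotopy category, equivalently via Lemma~\ref{dgmodulesaregradedmodules} in a suitable graded module category), every perfect complex $X$ is built from stalks $\Sigma^i \Lambda$ by finitely many extensions and summands — which is exactly the statement that $\per \Lambda = \thick(\Lambda)$ in $\Db(\modules\Lambda)$, now transported into $\mathcal C \mathcal B$. A bounded complex of finitely generated projectives is a finite iterated extension of shifts of $\Lambda$ (peel off $X^l$, then $X^{l-1}$, etc., using the degreewise-split conflations coming from the brutal truncation), and each such conflation is a conflation in the degreewise-split exact structure $\mathcal C \mathcal B$, so this goes through essentially formally.

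The step I expect to require the most care is making precise the passage "object of $\mathcal B$ $\leftrightarrow$ representable $\mathcal B$-module $\leftrightarrow$ an object of $\mathcal C \mathcal B$ built from $\iota \mathcal B_0$". The subtlety is that $\mathcal B$ is an orbit category, so the object $X$ and all its shifts $\Sigma^i X$ are literally \emph{the same object}, and one must check that the representable $X^{\wedge} \in \mathcal C \mathcal B$ genuinely decomposes as an iterated extension of copies of $\Lambda^{\wedge}$ \emph{inside} $\mathcal C \mathcal B$ and not merely in some ambient category — this is where Lemma~\ref{lemmainductionrestrictstopretriangulatedhulls} (induction preserves representables and sends the brutal-truncation conflations to honest conflations) does the real work, together with the observation that the graded-algebra description of Lemma~\ref{dgmodulesaregradedmodules} makes the truncation filtration visibly a filtration of graded modules. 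Once essential surjectivity is in hand, we conclude $\mathcal B_0^{\pretr} \cong \mathcal B^{\pretr}$, whence $\dgmodules \mathcal B \cong \dgmodules \mathcal B_0$ by Lemma~\ref{comparepretriangulatedhulls}, and the proof is complete.
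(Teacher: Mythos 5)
Your proposal follows essentially the same route as the paper: reduce via Lemma~\ref{comparepretriangulatedhulls} to showing $\mathcal B_0^{\pretr} \cong \mathcal B^{\pretr}$, get full faithfulness from Lemmas~\ref{fullyfaithfulinducesfullyfaithful} and~\ref{lemmainductionrestrictstopretriangulatedhulls}, and prove density by building every perfect complex from (shifts of) the stalk $\Lambda$ through brutal-truncation cones/extensions, transported through Yoneda into $\mathcal{CB}$. The only inaccuracies are cosmetic: the representable is $\Lambda^{\wedge}(Y)=\mathcal B(Y,\Lambda)$ rather than $\mathcal B(\Lambda,Y)$, and the compatibility of shifts and cones with Yoneda is verified directly in the paper (via $(\Sigma X)^{\wedge}\cong\Sigma(X^{\wedge})$ and $(C_f)^{\wedge}\cong C_{f^{\wedge}}$) rather than being a consequence of Lemma~\ref{lemmainductionrestrictstopretriangulatedhulls}.
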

\begin{proof}
By Lemma \ref{lemmainductionrestrictstopretriangulatedhulls}, the inclusion $\mathcal B_0 \hookrightarrow \mathcal B$ induces a functor
\[
\nu\colon \mathcal B_0^{\pretr} \to \mathcal B^{\pretr},
\]
and if the latter is dense, then it is an equivalence by Lemma \ref{fullyfaithfulinducesfullyfaithful}, which would suffice by Lemma \ref{comparepretriangulatedhulls}. In our specific setup, i.e.\ where the objects of the DG category $\mathcal B$ are complexes, shifts and certain cones exist already in $Z^0 \mathcal B$. Indeed, the notion of shift is the obvious one, and it is clear how to define the cone of each morphism
\[
f=(f_i)_i \in Z^0 \mathcal B(X,Y) = \bigoplus_{i \in \amsbb Z} \Hom_{\mathsf{C}(\Lambda)}(X, \Sigma^i Y)
\]
with the property that $f_i$ is non-zero for at most one $i$. This is in stark contrast to the general setup where we must pass to the module category before such constructions become available. For a subcategory $\mathcal E \subset \mathcal B$, denote by $\thick_{\mathcal B}(\mathcal E)$ the full subcategory of $\mathcal B$ whose objects are summands of shifts of objects of $\mathcal E$ and cones of maps from $Z^0 \mathcal E$ of the form of the above $f$. The key observation is that
\[
\thick_{\mathcal B}(\mathcal B_0) = \mathcal B.
\]
To see why this holds, note first that each $P^i$ in a perfect complex
\[
P = 0 \to P^0 \xrightarrow{\partial^0} P^1 \to \cdots \to P^{l-1} \xrightarrow{\partial^{l-1}} P^l \to 0
\]
belongs to $\thick_{\mathcal B}(\mathcal B_0)$. Further,
\[
0 \to P^0 \xrightarrow{\partial^0} P^1 \to 0
\]
is the cone of $\partial^0 \in Z^0 \mathcal B(P^0, P^1)$, and hence also belongs to $\thick_{\mathcal B}(\mathcal B_0)$. Inductively we obtain $P$ as the cone of the chain map
\begin{center}
   \begin{tikzpicture}[baseline=(current bounding box.center)]
   \matrix(a)[matrix of math nodes,
   row sep=2em, column sep=2em,
   text height=1.5ex, text depth=0.25ex]
   {0 & P^0 & P^1 & \cdots & P^{l-2} & P^{l-1} & 0 \\
    &  &  & & 0 & P^l & 0 \\};
   \path[->,font=\scriptsize](a-1-1) edge (a-1-2)
   (a-1-2) edge node[above]{$\partial^0$}(a-1-3)
   (a-1-3) edge (a-1-4)
   (a-1-4) edge (a-1-5)
   (a-1-5) edge node[above]{$\partial^{l-2}$}(a-1-6)
   (a-1-6) edge (a-1-7)

   (a-2-5) edge (a-2-6)
   (a-2-6) edge (a-2-7)

   (a-1-6) edge node[right]{$\partial^{l-1}$}(a-2-6);
   \end{tikzpicture}
\end{center}
between objects in $\thick_{\mathcal B}(\mathcal B_0)$. Moreover, taking shifts and cones commutes with the Yoneda embedding, in the sense that there are canonical isomorphisms
\[
(\Sigma X)^{\wedge} \cong \Sigma (X^{\wedge}) \enspace \text{and} \enspace (C_f)^{\wedge} \cong C_{f^{\wedge}}
\]
in $\mathcal{CB}$ for each $X \in \mathcal B$ and $f \in Z^0 \mathcal B(X,Y)$ of the above form. Combining these observations with the fact that $\nu$ is exact and commutativity of
\begin{center}
   \begin{tikzpicture}[baseline=(current bounding box.center)]
   \matrix(a)[matrix of math nodes,
   row sep=2.5em, column sep=3em,
   text height=1.5ex, text depth=0.25ex]
   {\mathcal B_0 & \mathcal B  \\
    {\mathcal B_0^{\pretr}} & \mathcal B^{\pretr} \\};
   \path[right hook->](a-1-1) edge (a-1-2);
   \path[->,font=\scriptsize](a-1-1) edge node[right]{$\iota$}(a-2-1)
   (a-1-2) edge node[right]{$\iota$}(a-2-2)
   (a-2-1) edge node[above]{$\nu$}(a-2-2);
   \end{tikzpicture}
\end{center}
it is straightforward to verify that $\mathcal B^{\pretr} = \nu(\mathcal B_0^{\pretr})$.
\end{proof}

The point of view that a category is merely a `ring with several objects' is justified also in the differential graded context. Hence we may identify the DG category $\mathcal B_0$ with the DG algebra $\Gamma = \mathcal B(\Lambda, \Lambda)$, which in combination with Lemma \ref{equivalenceofdgcategories} reads
\[
\dgmodules \mathcal B \cong \dgmodules \Gamma.
\]
$\Gamma$ is clearly the formal DG algebra $\Lambda[t, t^{-1}]$ with $|t|=1$, and we proceed by observing that DG $\Gamma$-modules are nothing but modules over the algebra of dual numbers.

\begin{lemma} \label{equivalanceofexactcategories}
   There is an equivalence of exact categories
   \[
   \mathcal C \Gamma \cong \C1(\Modules \Lambda).
   \]
\end{lemma}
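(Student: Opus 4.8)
The plan is to run everything through the translation set up in Subsection~\ref{dgmodulesasgradedmodules}. The key simplification is that the DG algebra $\Gamma = \mathcal B(\Lambda,\Lambda) = \Lambda[t,t^{-1}]$ is \emph{formal}, so its differential vanishes and the defining relation $d_\Gamma(a) + \varepsilon a - (-1)^{|a|}a\varepsilon = 0$ of $\Gamma(\varepsilon)$ degenerates to the sign rule $\varepsilon a = (-1)^{|a|}a\varepsilon$ for homogeneous $a$. Concretely, $\Gamma(\varepsilon)$ is $\Lambda[t,t^{-1}]\langle \varepsilon \rangle$ modulo $\varepsilon^2 = 0$, $\varepsilon\lambda = \lambda\varepsilon$ for $\lambda \in \Lambda$, and $\varepsilon t = -t\varepsilon$; as a graded $\Lambda$-bimodule it is $\Gamma \oplus \Gamma\varepsilon$. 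Lemma~\ref{dgmodulesaregradedmodules} then supplies an equivalence $\mathcal C\Gamma \cong \Gr\Gamma(\varepsilon)$, reducing the task to identifying graded $\Gamma(\varepsilon)$-modules with differential $\Lambda$-modules.

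For that second step I would exploit that $t$ is an invertible homogeneous element of degree $1$ in $\Gamma(\varepsilon)$. Multiplication by powers of $t$ identifies all graded pieces of any graded $\Gamma(\varepsilon)$-module $N$, and the standard consequence is that $N \mapsto N^0$ is an equivalence $\Gr\Gamma(\varepsilon) \cong \Modules B$, with $B = \Gamma(\varepsilon)_0$ the degree-zero subalgebra and quasi-inverse $V \mapsto V \otimes_B \Gamma(\varepsilon)$. Reading off degree zero in $\Gamma(\varepsilon) = \Gamma \oplus \Gamma\varepsilon$ gives $B = \Lambda \oplus \Lambda u$ with $u = t^{-1}\varepsilon$; this $u$ commutes with $\Lambda$ since $t$ and $\varepsilon$ do, and $u^2 = t^{-1}\varepsilon t^{-1}\varepsilon = -t^{-2}\varepsilon^2 = 0$ precisely on account of the sign $\varepsilon t = -t\varepsilon$. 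Hence $B \cong \Lambda[X]/(X^2) = \Lambda[\varepsilon]$, and composing yields $\mathcal C\Gamma \cong \Modules\Lambda[\varepsilon] = \C1(\Modules\Lambda)$.

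It then remains to check that the composite respects the exact structures: on the left the conflations are the short exact sequences of DG $\Gamma$-modules that split as graded $\Gamma$-modules, and on the right those that split over $\Lambda$. This works out because, under both equivalences, ``forget the differential'' on the left matches ``forget the $\varepsilon$-action'' on the right — tracing the functors, a DG $\Gamma$-module $M$ corresponds to a differential module whose underlying $\Lambda$-module is $M^0$, and a graded-$\Gamma$-linear (not necessarily closed) splitting restricts in degree zero to a $\Lambda$-linear splitting, and conversely. The place demanding the most care, and the only one where a real choice enters, is the sign bookkeeping in $\Gamma(\varepsilon)$: it is exactly the relation $\varepsilon t = -t\varepsilon$ that forces $u^2 = 0$ and hence delivers the dual numbers, and this sign has to be carried consistently through Lemma~\ref{dgmodulesaregradedmodules}; the remaining verifications are routine.
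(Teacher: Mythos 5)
Your argument is correct and follows essentially the same route as the paper: pass through Lemma \ref{dgmodulesaregradedmodules} to $\Gr \Gamma(\varepsilon)$, observe that this graded algebra is strongly graded (the paper cites Dade's theorem, where your invertible degree-one element $t$ is exactly the ``easy check''), and identify the degree-zero part $\Lambda \oplus \Lambda t^{-1}\varepsilon$ with $\Lambda[X]/(X^2)$. The only quibble is cosmetic: $u^2=0$ already follows from $\varepsilon^2=0$ after commuting past $t^{-1}$, so the sign $\varepsilon t=-t\varepsilon$ is not really what forces it.
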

\begin{proof}
 A graded ring $\Pi$ is said to be \textit{strongly graded} if $\Pi_i \Pi_j = \Pi_{i+j}$ for all $i, j \in \amsbb Z$. By a classical theorem of Dade \cite{MR593823}, $\Pi$ is strongly graded if and only if the functor
   \[
   - \otimes_{\Pi_0} \Pi \colon \Modules \Pi_0 \to \Gr \Pi
   \]
is an equivalence. In this case, a quasi-inverse takes a graded $\Pi$-module $M$ to the $\Pi_0$-module $M_0$, and we also have $\modules \Pi_0 \cong \gr \Pi$. Recall from Lemma \ref{dgmodulesaregradedmodules} that there is an equivalence $\mathcal C \Gamma \cong \Gr \Gamma(\varepsilon)$ where, since $\Gamma$ is formal,
   \[
   \Gamma (\varepsilon) = \Gamma\langle\varepsilon\rangle/(\varepsilon^2, \varepsilon \gamma -(-1)^{|\gamma|}\gamma \varepsilon)
   \]
is the graded algebra with $|\varepsilon|=1$. It is easy to check that $\Gamma(\varepsilon)$ is strongly graded, and hence the proof is completed by the straightforward calculation
   \[
   \Gamma(\varepsilon)_0 \cong \Lambda [X]/(X^2). \qedhere
   \]
\end{proof}

Upon passage to the level of derived categories, the two previous lemmas reveal a triangle equivalence
\[
\mathcal{DB} \cong \D1(\Modules \Lambda),
\]
which sets us up for proving Theorem \ref{theoremthehull}. First, under the DG equivalence of Lemma \ref{equivalenceofdgcategories}, $\per \mathcal B$ corresponds to $\per \Gamma$. Moreover, the equivalence of Lemma \ref{equivalanceofexactcategories} identifies the free $\Gamma$-module of rank one with $(\Lambda,0)$, making the exact category $\per \mathcal B$ equivalent to the full subcategory of $\C1(\modules \Lambda)$ whose class of objects is the iterated extensions of objects of the form $(P, 0)$ with $P \in \proj \Lambda$. We have seen, using the filtration (\ref{filtration}), that this is precisely the class of differential modules admitting projective flags, and hence passing to the derived level it follows that the triangulated hull $\M_{\Sigma}$ is all of $\D1(\modules \Lambda)$. On the other hand, each perfect complex $X \in H^0 \mathcal B \cong \Db(\modules \Lambda)/\Sigma$ sits in $\dgmodules \mathcal B$ as the associated representable functor, and corresponds by Lemma \ref{equivalenceofdgcategories} to the DG $\Gamma$-module $\mathcal B(\Lambda, X)$. Passing through the equivalence of Lemma \ref{equivalanceofexactcategories} amounts simply to taking degree zero, and so $X$ is further identified with
\[
\mathcal B(\Lambda,X)^0 = \bigoplus_{i \in \amsbb Z} \per \Lambda (\Lambda, \Sigma^i X)^0 = \bigoplus_{i \in \amsbb Z} \Hom_{\Lambda}(\Lambda, X^i) \cong \Delta X,
\]
equipped with the differential $\varepsilon_{\Delta X}$.

\begin{theorem} \label{theoremthehull}
   Compression of complexes, considered as a functor
   \[
   \Delta\colon \Db (\modules \Lambda)/ \Sigma \to \D1(\modules \Lambda),
   \]
   is precisely the embedding of the orbit category into its triangulated hull. \qed
\end{theorem}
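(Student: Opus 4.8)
The plan is to assemble the chain of equivalences established in Lemmas~\ref{equivalenceofdgcategories} and~\ref{equivalanceofexactcategories} and track through it both the triangulated hull $\M_\Sigma = \per\mathcal B$ and the image of the orbit category $H^0\mathcal B$, verifying that the resulting identification with $\D1(\modules\Lambda)$ restricted to the orbit category is exactly the functor $\Delta$. Concretely, I would first invoke Lemma~\ref{equivalenceofdgcategories}, together with the identification of the DG category $\mathcal B_0$ with the formal DG algebra $\Gamma = \Lambda[t,t^{-1}]$, to obtain a DG equivalence $\dgmodules\mathcal B \cong \dgmodules\Gamma$; then Lemma~\ref{equivalanceofexactcategories} yields an equivalence of exact categories $\mathcal C\Gamma \cong \C1(\Modules\Lambda)$, which passes to a triangle equivalence $\mathcal D\mathcal B \cong \D1(\Modules\Lambda)$ on derived categories. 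The whole point is that all of this is canonical, so the only remaining work is bookkeeping on two distinguished full subcategories.

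Next I would pin down where $\M_\Sigma = \per\mathcal B$ lands. Under $\dgmodules\mathcal B \cong \dgmodules\Gamma$ the subcategory $\per\mathcal B$ corresponds to $\per\Gamma$, i.e.\ the thick subcategory generated by the free rank-one module. Under the equivalence $\mathcal C\Gamma \cong \C1(\Modules\Lambda)$, the free $\Gamma$-module of rank one is carried to $(\Lambda,0)$, so $\per\mathcal B$ becomes (on the level of $\mathcal C$) the closure of $\{(P,0)\mid P\in\proj\Lambda\}$ under iterated extensions and summands. By the filtration~(\ref{filtration}) this is exactly the class of differential modules admitting projective flags, so on the derived level $\per\mathcal B$ is identified with all of $\D1(\modules\Lambda)$, using Proposition~\ref{htprojisprojflag} (and Proposition~\ref{htprojisrelproj}). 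Thus the ambient triangulated hull is $\D1(\modules\Lambda)$, as claimed.

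Then I would trace the Yoneda embedding $H^0\mathcal B \to \mathcal D\mathcal B$. An object $X\in H^0\mathcal B \cong \Db(\modules\Lambda)/\Sigma$, viewed as the representable DG $\mathcal B$-module $\mathcal B(-,X)$, corresponds under Lemma~\ref{equivalenceofdgcategories} to the DG $\Gamma$-module $\mathcal B(\Lambda,X)$; passing through Lemma~\ref{equivalanceofexactcategories} means taking degree zero, and the computation already displayed in the excerpt,
\[
\mathcal B(\Lambda,X)^0 = \bigoplus_{i\in\amsbb Z}\per\Lambda(\Lambda,\Sigma^i X)^0 = \bigoplus_{i\in\amsbb Z}\Hom_\Lambda(\Lambda,X^i) \cong \Delta X,
\]
together with the identification of the induced differential with $\varepsilon_{\Delta X}$, shows that the composite equivalence sends $X$ to the differential module $\Delta X$. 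Since the canonical functor $\Db(\modules\Lambda)/\Sigma = H^0\mathcal B \to \mathcal D\mathcal B$ is by construction the embedding of the orbit category into its triangulated hull, and since we have just identified this embedding with $\Delta\colon\Db(\modules\Lambda)/\Sigma\to\D1(\modules\Lambda)$, the theorem follows. To finish cleanly one should also check functoriality of this identification on morphisms, but that is forced: the equivalences of Lemmas~\ref{equivalenceofdgcategories} and~\ref{equivalanceofexactcategories} are functorial by construction, and on morphism spaces the identification recovers precisely the isomorphism described before Lemma~\ref{lemmacompressionembedding}.

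The main obstacle, such as it is, is not any single hard estimate but rather the discipline of verifying that the two identifications---``$\per\mathcal B$ corresponds to projective-flag differential modules'' and ``$X^\wedge$ corresponds to $\Delta X$''---are compatible with each other and with the triangulated structure, i.e.\ that we are genuinely recovering $\Delta$ on the nose (including its action on $\Sigma$ and on cones) rather than merely an abstractly equivalent functor. This is where the explicit descriptions of the differential on $\mathcal B(X,Y)$ recorded just before Lemma~\ref{equivalenceofdgcategories}, and the explicit form of the equivalence in Lemma~\ref{dgmodulesaregradedmodules}, do the real work; everything else is a matter of stringing together results already in hand.
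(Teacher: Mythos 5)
Your proposal is correct and takes essentially the same route as the paper: the paper's proof of Theorem \ref{theoremthehull} is precisely the discussion preceding it in Section \ref{sectionthehull}, namely passing through $\dgmodules \mathcal B \cong \dgmodules \Gamma$ (Lemma \ref{equivalenceofdgcategories}) and $\mathcal C \Gamma \cong \C1(\Modules \Lambda)$ (Lemma \ref{equivalanceofexactcategories}), identifying $\per \mathcal B$ with the differential modules admitting projective flags via the filtration (\ref{filtration}) so that the hull becomes all of $\D1(\modules \Lambda)$, and identifying the representable of $X$ with $(\Delta X, \varepsilon_{\Delta X})$. Your bookkeeping of these two subcategories, and the remark that morphism spaces match the identification used before Lemma \ref{lemmacompressionembedding}, coincides with the paper's argument.
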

\begin{remark}
   More generally, for each positive integer $n$, the embedding
   \[
   \Delta \colon \Db(\modules \Lambda)/\Sigma^n \to \Dn(\modules \Lambda)
   \]
   described in the remark following Lemma \ref{lemmacompressionembedding}, is precisely the embedding of the orbit category into its triangulated hull. This is of course dense, and hence an equivalence, if and only if the compression $\Delta\colon \Db(\modules \Lambda) \to \Dn(\modules \Lambda)$ is dense.
\end{remark}

\section{Applications} \label{applications}

\subsection{Iterated tilted algebras}
A first application is Proposition \ref{weakversionofkellersresult} which recovers a weak version of \cite[Theorem 1]{MR2184464}. Recall that if $\mathcal H$ is a hereditary abelian category, then in $\Db(\mathcal H)$ there is an isomorphism
\[
X \cong \bigoplus_{i \in \amsbb Z} \Sigma^{-i}H^i (X)
\]
for each $X$. We start by observing that the analogous property holds in $\D1(\mathcal H)$.

\begin{lemma} \label{eachisisotostalkwhenhereditary}
   If $\mathcal H$ is hereditary abelian, then in $\D1(\mathcal H)$ there is an isomorphism
   \[
   \bigl( M, \varepsilon_M \bigr) \cong \bigl( H(M), 0 \bigr)
   \]
   for each $(M, \varepsilon_M)$.
\end{lemma}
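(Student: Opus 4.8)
The statement is a $1$-periodic analogue of the well-known splitting $X \cong \bigoplus_i \Sigma^{-i} H^i(X)$ in the derived category of a hereditary abelian category. The natural strategy is to mimic the proof of that classical fact: reduce to the homotopically projective model via Proposition \ref{htprojisprojflag}, where a differential module is represented by a short complex of projectives (a projective flag of length $\leq 1$, by heredity), and then split off the homology directly. So the first step is to replace $(M,\varepsilon_M)$ by a quasi-isomorphic differential module $(\p M, \varepsilon_{\p M})$ admitting a projective flag, using Lemma \ref{htpresolutionsexist}; since $\mathcal H$ is hereditary, in the construction of that lemma the resolutions of $\Image(\varepsilon_M)$ and of $H(M)$ can be taken of length $1$, so the flag has at most two layers, i.e.\ $\p M = P_1 \oplus P_0$ with $\varepsilon_{\p M} = \bigl(\begin{smallmatrix} 0 & 0 \\ \partial_1 & 0\end{smallmatrix}\bigr)$ (up to a sign that plays no role), where $0 \to P_1 \xrightarrow{\partial_1} P_0 \to \Image(\varepsilon_M) \to 0$ splices with further data for $H(M)$.

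\textbf{Key steps.} First I would record that it suffices to prove the isomorphism for $(\p M, \varepsilon_{\p M})$ with a two-term projective flag, because $H$ is a functor on $\K1$ (hence on $\D1$ after localizing) and $\Delta$-type quasi-isomorphisms induce isomorphisms on homology. Second, I would exhibit the homology explicitly: from the flag description, $\Image(\varepsilon_{\p M})$ is the image of $\partial_1$ viewed inside $P_0$, and $H(\p M) = \Kernel(\varepsilon_{\p M})/\Image(\varepsilon_{\p M})$ can be computed from the flag as an extension built from $\Kernel(\partial_1)$ (which is a submodule of a projective, hence — crucially — projective, since $\mathcal H$ hereditary) and the prescribed $H(M)$. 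The heredity is exactly what lets me split $\Kernel(\partial_1)$ off as a direct summand of $P_1$. Third, with everything in sight projective, I would write down a direct-sum decomposition of the underlying module $\p M = P_1 \oplus P_0$ compatible with $\varepsilon_{\p M}$, separating a contractible piece $\bigl(Q \oplus Q, \bigl(\begin{smallmatrix} 0 & 0\\ 1 & 0\end{smallmatrix}\bigr)\bigr)$ (where $Q$ is a projective cover / splitting of $\Image(\varepsilon_{\p M})$) from the complement, which carries the zero differential and has underlying module $H(\p M)$. The contractible piece is projective-injective in $\C1(\mathcal H)$ — it is exactly the shape of the injective envelope displayed in Subsection \ref{triangulatedstructure} — hence vanishes in $\K1(\mathcal H)$, a fortiori in $\D1(\mathcal H)$. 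What remains is $(H(\p M), 0) \cong (H(M), 0)$, which is the claim.

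\textbf{Main obstacle.} The delicate point is bookkeeping: producing the explicit splitting of $\varepsilon_{\p M}$ into a contractible summand plus a zero-differential summand whose underlying module is genuinely $H(M)$ on the nose. This requires choosing splittings of the short exact sequences $0 \to \Image(\varepsilon_{\p M}) \to \Kernel(\varepsilon_{\p M}) \to H(\p M) \to 0$ and $0 \to \Kernel(\varepsilon_{\p M}) \to \p M \to \Image(\varepsilon_{\p M}) \to 0$ — all of which split because every term is projective over a hereditary algebra — and then verifying that the change of basis is compatible with the matrix form of the differential. This is precisely the kind of matrix juggling done in the proof of Proposition \ref{htprojisrelproj}, and I expect the cleanest route is to invoke Proposition \ref{htprojisprojflag} to work in $\K1(\mathcal H)$ and simply observe that, over a hereditary algebra, every differential module with a two-term projective flag decomposes in $\C1(\mathcal H)$ as (zero-differential part) $\oplus$ (projective-injective part), reading off the zero-differential part's underlying module as the homology. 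An alternative, perhaps slicker, argument: build $(H(M),0)$ as a differential submodule of $(\p M, \varepsilon_{\p M})$ via a splitting of $\Kernel(\varepsilon_{\p M}) \twoheadrightarrow H(\p M)$ together with a splitting $P_0 \twoheadrightarrow \Image(\varepsilon_{\p M})$ lifted back, and check the quotient is the standard contractible object; heredity guarantees all the requisite splittings exist. Either way, the content is entirely that \emph{submodules of projectives are projective}, which collapses the periodic complex up to homotopy.
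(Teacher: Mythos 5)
You are proving the wrong kind of statement: the isomorphism of the lemma lives only in $\D1(\mathcal H)$ and is realized by a zig-zag of quasi-isomorphisms, whereas both of your routes try to produce it inside $\C1(\mathcal H)$ (a direct sum decomposition into a zero-differential part and a projective-injective part, or a differential submodule $(H(M),0)\subset (\p M,\varepsilon_{\p M})$ with contractible quotient). The key splitting you invoke does not exist: heredity makes $\Kernel(\varepsilon_{\p M})$ and $\Image(\varepsilon_{\p M})$ projective and splits $0\to\Kernel(\varepsilon_{\p M})\to\p M\to\Image(\varepsilon_{\p M})\to 0$, but the sequence $0\to\Image(\varepsilon_{\p M})\to\Kernel(\varepsilon_{\p M})\to H(\p M)\to 0$ has cokernel $H(M)$, which is an arbitrary object and in general not projective, so this sequence does not split (your parenthetical ``all of which split because every term is projective'' is false for this term). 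Worse, the desired conclusion is impossible whenever $H(M)$ is not projective: any isomorphism in $\C1(\mathcal H)$ of $(\p M,\varepsilon_{\p M})$ with $(H(M),0)\oplus(\text{projective-injective})$, or any embedding of $(H(M),0)$ as a subobject of $(\p M,\varepsilon_{\p M})$, would exhibit $H(M)$ as a subobject of the projective module underlying $\p M$, forcing $H(M)$ projective. Concretely, over $\mathbb k(1\to 2)$ take $(P,\varepsilon)$ to be the compression of a minimal projective resolution $0\to Q_1\xrightarrow{d} Q_0\to N\to 0$ of a non-projective $N$: then $\Kernel(\varepsilon)\cong Q_0$, $\Image(\varepsilon)=d(Q_1)$, the sequence above is the resolution itself and does not split, $(P,\varepsilon)$ admits no such decomposition in $\C1$, and the quasi-isomorphism $(P,\varepsilon)\to(N,0)$ is not a homotopy equivalence. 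So no amount of matrix juggling in $\K1$ in the style of Proposition \ref{htprojisrelproj} can close this gap; one must genuinely localize.

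The paper's proof avoids resolutions entirely. It uses heredity in the form ``$\Ext^1_{\mathcal H}(H(M),-)$ is right exact'': applying this to the epimorphism $M\twoheadrightarrow\Image(\varepsilon_M)$ lifts the extension $0\to\Image(\varepsilon_M)\to\Kernel(\varepsilon_M)\to H(M)\to 0$ to an extension $0\to M\xrightarrow{m}E\xrightarrow{p}H(M)\to 0$ mapping onto it via $(\varepsilon_M,s,1_{H(M)})$. The compression of the two-term complex $M\xrightarrow{m}E$ is then quasi-isomorphic to $(H(M),0)$ (compress the chain map to the stalk complex), and the map $(\begin{smallmatrix}1_M & s\end{smallmatrix})$ to $(M,\varepsilon_M)$ is checked to be a quasi-isomorphism; the isomorphism in $\D1(\mathcal H)$ is this roof. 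This also repairs a second defect of your plan: the lemma concerns an arbitrary hereditary abelian category $\mathcal H$, which need not have enough projectives (coherent sheaves on a curve, for instance), while Lemma \ref{htpresolutionsexist} and Proposition \ref{htprojisprojflag} are formulated for $\modules\Lambda$; even where projectives do exist, note also that the resolution produced by Lemma \ref{htpresolutionsexist} has differential with nonzero diagonal blocks $\varepsilon_1,\varepsilon_0$, not the two-term shape $\bigl(\begin{smallmatrix}0&0\\ \partial_1&0\end{smallmatrix}\bigr)$ you assumed.
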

\begin{proof}
The idea is of course to think of global dimension $i$ not as $\Ext^{i+1}$ vanishing, but rather as $\Ext^i$ being right exact. The assumption on $\mathcal H$ hence tells us that the epimorphism $M \xrightarrow{\varepsilon_M} \Image (\varepsilon_M)$ induces the exact
\[
\Ext^1_{\mathcal H}(H(M),M) \to \Ext^1_{\mathcal H}(H(M), \Image (\varepsilon_M)) \to 0.
\]
In particular there is a commutative diagram
\begin{equation} \label{surjectivemapofextensions}
   \begin{tikzpicture}[baseline=(current bounding box.center)]
   \matrix(a)[matrix of math nodes,
   row sep=2em, column sep=2em,
   text height=1.5ex, text depth=0.25ex]
   {0 & M & E & H(M) & 0 \\
   0 & \Image (\varepsilon_M) & \Kernel (\varepsilon_M) & H(M) & 0 \\};
   \path[->,font=\scriptsize](a-1-1) edge (a-1-2)
   (a-1-2) edge node[above]{$m$}(a-1-3)
   (a-1-3) edge node[above]{$p$}(a-1-4)
   (a-1-4) edge (a-1-5)

   (a-2-1) edge (a-2-2)
   (a-2-2) edge (a-2-3)
   (a-2-3) edge (a-2-4)
   (a-2-4) edge (a-2-5)

   (a-1-2) edge node[right]{$\varepsilon_M$}(a-2-2)
   (a-1-3) edge node[right]{$s$}(a-2-3)
   (a-1-4) edge node[right]{$1_{H(M)}$}(a-2-4);
   \end{tikzpicture}
\end{equation}
in $\mathcal H$ with exact rows. Notice that the chain map
\begin{center}
   \begin{tikzpicture}[baseline=(current bounding box.center)]
   \matrix(a)[matrix of math nodes,
   row sep=2em, column sep=2em,
   text height=1.5ex, text depth=0.25ex]
   {0 & M & E & 0 \\
   0 & 0 & H(M) & 0 \\};
   \path[->,font=\scriptsize](a-1-1) edge (a-1-2)
   (a-1-2) edge node[above]{$m$}(a-1-3)
   (a-1-3) edge (a-1-4)

   (a-2-1) edge (a-2-2)
   (a-2-2) edge (a-2-3)
   (a-2-3) edge (a-2-4)

   (a-1-3) edge node[right]{$p$}(a-2-3);
   \end{tikzpicture}
\end{center}
is a quasi-isomorpism, and hence
\[
\bigl( H(M),0 \bigr) \cong \Biggl( M \oplus E, \begin{pmatrix} 0 & 0 \\ m & 0 \\ \end{pmatrix}\Biggr)
\]
in $\D1(\mathcal H)$ by compression. The kernel and image of the rightmost differential is $E$ and the image of $m$, respectively, viewed as subobjects of $M \oplus E$ in the obvious ways. The map induced in homology by
\[
f  \colon \Biggl( M \oplus E, \begin{pmatrix} 0 & 0 \\ m & 0 \\ \end{pmatrix} \Biggr) \xrightarrow{( \begin{smallmatrix} 1_M & s \\ \end{smallmatrix})} \bigl( M, \varepsilon_M \bigr)
\]
therefore appears as the cokernel
\begin{center}
   \begin{tikzpicture}[baseline=(current bounding box.center)]
   \matrix(a)[matrix of math nodes,
   row sep=2em, column sep=2em,
   text height=1.5ex, text depth=0.25ex]
   {0 & \Image (m) & E & H(M \oplus E) & 0 \\
   0 & \Image (\varepsilon_M) & \Kernel (\varepsilon_M) & H(M) & 0. \\};
   \path[->,font=\scriptsize](a-1-1) edge (a-1-2)
   (a-1-2) edge (a-1-3)
   (a-1-3) edge (a-1-4)
   (a-1-4) edge (a-1-5)

   (a-2-1) edge (a-2-2)
   (a-2-2) edge (a-2-3)
   (a-2-3) edge (a-2-4)
   (a-2-4) edge (a-2-5)

   (a-1-2) edge node[right]{$s$}(a-2-2)
   (a-1-3) edge node[right]{$s$}(a-2-3)
   (a-1-4) edge node[right]{$H f$}(a-2-4);
   \end{tikzpicture}
\end{center}
This, however, is nothing but diagram (\ref{surjectivemapofextensions}), which means that $H f$ is invertible.
\end{proof}

\begin{proposition} \label{weakversionofkellersresult}
   The orbit category $\Db(\modules \Lambda)/\Sigma$ is triangulated whenever $\Lambda$ is an iterated tilted algebra.
\end{proposition}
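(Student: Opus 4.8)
The plan is to deduce this from Theorem \ref{theoremthehull} together with Lemma \ref{derivedequivalenceinducesperiodicequivalence} and Lemma \ref{eachisisotostalkwhenhereditary}. By Theorem \ref{theoremthehull} and the remark following it, the orbit category $\Db(\modules\Lambda)/\Sigma$ carries the triangulated structure of its hull precisely when the embedding $\Delta$ is an equivalence, that is, when the compression functor $\Delta\colon\Db(\modules\Lambda)\to\D1(\modules\Lambda)$ is dense. So the task reduces to showing that, for $\Lambda$ iterated tilted, every differential $\Lambda$-module is isomorphic in $\D1(\modules\Lambda)$ to a gradable one.

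First I would settle the hereditary case. If $H$ is a finite dimensional hereditary algebra, then Lemma \ref{eachisisotostalkwhenhereditary} applied to $\mathcal H=\modules H$ tells us that every $(M,\varepsilon_M)\in\D1(\modules H)$ is isomorphic to $(H(M),0)$, and the latter is visibly $\Delta$ of the stalk complex on $H(M)$ placed in degree zero. Hence $\Delta\colon\Db(\modules H)\to\D1(\modules H)$ is dense.

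Next I would transport density along a derived equivalence. Since $\Lambda$ is iterated tilted it is derived equivalent to a hereditary algebra $H$, and by Rickard's theorem this can be realised by a bounded complex $X$ of $\Lambda\text{--}H$-bimodules with terms projective on each side, so that in particular each $X^i$ is flat as a $\Lambda$-module and $-\otimes_\Lambda X\colon\Db(\modules\Lambda)\to\Db(\modules H)$ is an equivalence. By Lemma \ref{derivedequivalenceinducesperiodicequivalence} the functor $-\boxtimes_\Lambda X\colon\D1(\modules\Lambda)\to\D1(\modules H)$ is then also an equivalence, and diagram (\ref{tensorproductcommuteswithcompressiondiagram}) shows it intertwines the two compressions. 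A short diagram chase now concludes the proof: given $N\in\D1(\modules\Lambda)$, the hereditary case supplies $Y'\in\Db(\modules H)$ with $\Delta Y'\cong N\boxtimes_\Lambda X$; choosing $Y\in\Db(\modules\Lambda)$ with $Y\otimes_\Lambda X\cong Y'$ and using the commutativity of (\ref{tensorproductcommuteswithcompressiondiagram}) we obtain $(\Delta Y)\boxtimes_\Lambda X\cong\Delta(Y\otimes_\Lambda X)\cong\Delta Y'\cong N\boxtimes_\Lambda X$, and therefore $\Delta Y\cong N$ since $-\boxtimes_\Lambda X$ reflects isomorphisms. Thus $\Delta\colon\Db(\modules\Lambda)\to\D1(\modules\Lambda)$ is dense, as required. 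The only input here that is not pure bookkeeping from the earlier sections is the existence of such a standard (two-sided) derived equivalence with flat terms, which is classical, so I do not anticipate a genuine obstacle.
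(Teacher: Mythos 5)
Your proposal is correct and follows essentially the same route as the paper: reduce to density of the compression $\Delta\colon \Db(\modules \Lambda)\to\D1(\modules \Lambda)$, settle the hereditary case with Lemma \ref{eachisisotostalkwhenhereditary}, and transport density along a standard derived equivalence using Lemma \ref{derivedequivalenceinducesperiodicequivalence} and diagram (\ref{tensorproductcommuteswithcompressiondiagram}). The only difference is cosmetic: the paper takes the equivalence from the hereditary algebra to $\Lambda$, so density transfers immediately, whereas you go the other way and close the chase by noting that the equivalence $-\boxtimes_{\Lambda}X$ reflects isomorphisms, which is equally valid.
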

\begin{proof}
   It suffices to show that the compression $\Delta\colon \Db(\modules \Lambda) \to \D1 (\modules \Lambda)$ is dense. This is easy, since when $\Lambda$ is iterated tilted there is a hereditary algebra $\Lambda'$ and a standard equivalence $\Db(\modules \Lambda') \to \Db(\modules \Lambda)$. From (\ref{tensorproductcommuteswithcompressiondiagram}) and Lemma \ref{derivedequivalenceinducesperiodicequivalence}, this fits in a commutative diagram
   \begin{center}
      \begin{tikzpicture}[baseline=(current bounding box.center)]
     \matrix(a)[matrix of math nodes,
     row sep=2.5em, column sep=3em,
     text height=1.5ex, text depth=0.25ex]
     {\Db(\modules \Lambda') & \Db(\modules \Lambda) \\
     \D1(\modules \Lambda') & \D1(\modules \Lambda) \\};
     \path[->,font=\scriptsize](a-1-1) edge (a-1-2)
     (a-2-1) edge (a-2-2)
     (a-1-1) edge node[right]{$\Delta$}(a-2-1)
     (a-1-2) edge node[right]{$\Delta$}(a-2-2);
     \end{tikzpicture}
  \end{center}
   where also the bottom row is an equivalence. By Lemma \ref{eachisisotostalkwhenhereditary} it is clear that the left hand compression functor is dense, forcing density of the right hand one.
\end{proof}
\begin{remark}
   In both Lemma \ref{eachisisotostalkwhenhereditary} and Proposition \ref{weakversionofkellersresult}, one may replace $\Sigma$ by $\Sigma^n$.
\end{remark}

\subsection{Non-gradable objects}
We now turn to the perhaps more intriguing problem of finding algebras $\Lambda$ for which $\Db(\modules \Lambda)/\Sigma^n$ does \textit{not} coincide with its triangulated hull. Akin to \cite[Theorem 7.1]{MR3024264}, it turns out that the existence of an oriented cycle in the ordinary quiver of $\Lambda$ is sufficient (Theorem \ref{orientedcyclesuffices}). A fundamental tool for proving this is the following.

\begin{proposition} \label{existenceofperiodiccomplexsufficient}
   If there exists an indecomposable periodic complex of finitely generated projective $\Lambda$-modules, then the embedding of $\Db(\modules \Lambda)/\Sigma$ into its triangulated hull is not dense.
\end{proposition}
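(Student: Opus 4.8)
The plan is to apply Theorem~\ref{theoremthehull}: the embedding in question is compression $\Delta\colon \Db(\modules \Lambda)/\Sigma \to \D1(\modules \Lambda)$, so its failing to be dense amounts to the existence of a non-gradable object of $\D1(\modules \Lambda)$, and I claim the given indecomposable periodic complex is one. In the case $n=1$ it is a differential module $(P_0,\varepsilon)$ with $P_0 \in \proj \Lambda$ finitely generated; being relatively projective it is homotopically projective by Proposition~\ref{htprojisrelproj}, hence (Lemma~\ref{localizationrestrictstofullyfaithful}, Proposition~\ref{htprojisprojflag}) an object of $\D1(\modules \Lambda)$ whose morphisms may be computed in $\K1(\modules \Lambda)$. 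Its indecomposability forces $\varepsilon \neq 0$ and forces $\varepsilon$ to lie in the radical of $\End_{\Lambda}(P_0)$, equivalently that $(P_0,\varepsilon)$ has no contractible direct summand; I will call this last property \emph{minimality}.

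The first step is to pin down gradability at the level of $\C1(\modules \Lambda)$. Suppose $(P_0,\varepsilon) \cong \Delta X$ in $\D1(\modules \Lambda)$ for some bounded complex $X$. As $\Lambda$ has finite global dimension we may take $X$ to be a minimal bounded complex of finitely generated projectives, and since compression preserves quasi-isomorphisms the isomorphism persists. Then $\Delta X$ is again homotopically projective and minimal --- its differential is the block-staircase matrix of the $\partial^i$, whose entries are radical by minimality of $X$ --- so by Lemma~\ref{localizationrestrictstofullyfaithful} the isomorphism already holds in $\K1(\modules \Lambda)$. This is the stable category of the Frobenius category $\C1(\modules \Lambda)$, and since $\C1(\modules \Lambda) = \modules \Lambda[\varepsilon]$ is a finite-dimensional module category, Krull--Schmidt holds there; cancelling the (projective-injective) contractible summands from either side, the isomorphism of the two minimal objects lifts to an honest isomorphism $(P_0,\varepsilon) \cong \Delta X$ of differential modules.

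The contradiction is now immediate. An isomorphism $(P_0,\varepsilon) \cong \Delta X = \bigl(\bigoplus_i X^i,\ \bigoplus_i \partial^i\bigr)$ exhibits $P_0$ as a $\amsbb Z$-graded $\Lambda$-module on which $\varepsilon$ acts homogeneously of degree $1$. The given periodic complex is the unrolling $\cdots \to P_0 \xrightarrow{\varepsilon} P_0 \xrightarrow{\varepsilon} \cdots$, and the projections of $P_0$ onto its homogeneous components $X^i$ assemble into a complete family of orthogonal idempotent chain endomorphisms of it indexed by $\amsbb Z$ (one checks directly that each such idempotent commutes with $\varepsilon$), splitting the periodic complex into $\amsbb Z$ many shifted copies of $X$. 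As $X \neq 0$ this contradicts indecomposability, so $(P_0,\varepsilon)$ is not gradable and $\Delta$ is not dense.

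The delicate point is the first step, namely descending from an isomorphism in the derived category $\D1(\modules \Lambda)$ to an isomorphism of differential modules; it rests on the identification of relatively projective with homotopically projective differential modules, together with the Krull--Schmidt/minimality argument in the ambient Frobenius category. Once that is in place the unrolling is elementary. For a general period, and for $\Db(\modules \Lambda)/\Sigma^n$, the argument is verbatim the same, the $n$-periodic compression of a bounded complex unrolling into a direct sum of its shifts.
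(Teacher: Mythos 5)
Your argument for the case you actually treat in detail --- when the given periodic complex is itself $1$-periodic, so that it \emph{is} a differential module $(P_0,\varepsilon)$ --- is correct, and it takes a genuinely different route from the paper: you descend the isomorphism $(P_0,\varepsilon)\cong\Delta X$ from $\D1(\modules \Lambda)$ to an honest isomorphism in $\C1(\modules \Lambda)$ (both sides are homotopically projective by Proposition \ref{htprojisrelproj}, Lemma \ref{localizationrestrictstofullyfaithful} puts the isomorphism in $\K1(\modules \Lambda)$, and Krull--Schmidt in $\modules \Lambda[\varepsilon]$ plus minimality cancels the projective-injective summands), and then unroll the grading to split the periodic complex into $\amsbb Z$ many shifts of $X$. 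Two small reading points: indecomposability of the \emph{differential module} does not force $\varepsilon$ to be radical (over $\Lambda=\mathbb k$ the free $\mathbb k[X]/(X^2)$-module is indecomposable with non-radical differential); what you need, and what you in fact use in the final contradiction, is indecomposability of the unrolled complex. Also the idempotents splitting off the shifts of $X$ are the ``sliding'' ones, whose degree $k$ component is the projection onto $X^{j+k}$, not one and the same projection in every degree.

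The genuine gap is your closing claim that the general period is ``verbatim the same''. The proposition, and its use in Theorem \ref{orientedcyclesuffices}, concerns an indecomposable periodic complex $Y$ of arbitrary period $l$, while the orbit category is $\Db(\modules \Lambda)/\Sigma$; the object you must show to be non-gradable is then the compression of $Y$, whose unrolling is $\bigoplus_{i=1}^{l}\Sigma^i Y$, not $Y$ itself. Your descent step still produces an honest isomorphism of complexes $\bigoplus_{i=1}^{l}\Sigma^i Y\cong\bigoplus_{j\in\amsbb Z}\Sigma^j X$, but for $l\geq 2$ the left-hand side is already decomposable, so ``this contradicts indecomposability'' no longer applies. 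To finish you need an exchange/Krull--Schmidt argument in the category of unbounded complexes: pick a nonzero indecomposable summand $X_0$ of $X$, use that $\End_{\Kb(\Lambda)}(X_0)$ (equivalently $\End_{\Cb(\proj\Lambda)}(X_0)$) is local to conclude that $X_0$ is a summand of some $\Sigma^{i_0}Y$, and contradict the fact that the unbounded indecomposable $Y$ has no nonzero bounded summand. That step is precisely the crux of the paper's own proof, so as written your proposal establishes only the period-one case (more generally, period dividing $n$ for $\Sigma^n$); the general case is fillable along the above lines, but it is not verbatim.
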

\begin{proof}
   It suffices, using the resolutions of differential modules from Subsection \ref{resolutions}, to show that the compression functor
   \[
   \Delta\colon \Kb(\proj \Lambda) \to \K1(\proj \Lambda)
   \]
   is not dense, i.e.\ to produce an object in $\K1(\proj \Lambda)$ that is not gradable. So let
   \[
   Y = \cdots \to Y^l \xrightarrow{\partial^l} Y^1 \xrightarrow{\partial^1} Y^2 \to \cdots \to Y^{l-1} \xrightarrow{\partial^{l-1}} Y^l \xrightarrow{\partial^l} Y^1 \to \cdots
   \]
   be an indecomposable complex of finitely generated projectives. Then $Y$ is indecomposable also after we factor out homotopy, and the coproduct
   \[
   \bigoplus_{i=1}^l \Sigma^i Y
   \]
   is $1$-periodic, that is a differential module. Moreover, as such it cannot be gradable. Indeed, assume it belongs to the essential image of $\Delta$. This means there is some indecomposable $X \in \Kb(\proj \Lambda)$ such that
   \[
   \bigoplus_{i=1}^l \Sigma^i Y \cong \bigoplus_{i \in \amsbb Z} \Sigma^i X
   \]
   in $\K1(\proj \Lambda)$. Since $\End_{\Kb(\Lambda)}(X)$ is local, this implies that $X$ must be a summand of at least one of the left hand summands. This is a contradiction, as $Y$ cannot have a summand in $\Kb(\proj \Lambda)$.
\end{proof}

\begin{theorem} \label{orientedcyclesuffices}
   If $\Lambda$ is non-triangular, then $\Db(\modules \Lambda)/\Sigma$ is strictly smaller than its triangulated hull.
\end{theorem}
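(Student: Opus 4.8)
The plan is to reduce to Proposition \ref{existenceofperiodiccomplexsufficient}, so that — granting that $\Lambda$ non-triangular means its ordinary quiver has an oriented cycle — it suffices to exhibit an indecomposable periodic complex of finitely generated projective $\Lambda$-modules. I would produce a $1$-periodic one, namely a relatively projective differential module of the shape $(P_v,\varepsilon)$ with $P_v$ an \emph{indecomposable} projective and $\varepsilon$ nilpotent; this is exactly the kind of object exhibited concretely in the example following Proposition \ref{htprojisrelproj}.

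First I would fix the data. Writing $\mathsf{rad}$ for the Jacobson radical, $e_v(\mathsf{rad}\,\Lambda)e_v$ is the space of cyclic paths at the vertex $v$, which is $\mathsf{rad}\,\End_\Lambda(P_v)$. The crucial claim — and, I expect, the heart of the matter — is that \emph{some} vertex $v$ carries a nonzero cyclic path, equivalently $\mathsf{rad}\,\End_\Lambda(P_v)\neq 0$. This is precisely where finiteness of the global dimension is indispensable: if every $\End_\Lambda(P_v)$ were a division algebra, then, fixing a minimal oriented cycle $v_1\to v_2\to\cdots\to v_m\to v_1$ (note $m\geq 2$, since a loop at $v$ would give $\Ext^1_\Lambda(S_v,S_v)\neq 0$ and hence infinite global dimension), the arrows would give nonzero classes in each $\Ext^1_\Lambda(S_{v_i},S_{v_{i+1}})$ whose interaction one can show — by tracking the minimal projective resolutions of the $S_{v_i}$ around the cycle, or via unimodularity of the Cartan matrix — to be incompatible with all $S_{v_i}$ having finite projective dimension. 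Having secured such a $v$, choose $0\neq\theta\in\mathsf{rad}\,\End_\Lambda(P_v)$ and let $N\geq 2$ be minimal with $\theta^N=0$; then $p:=\theta^{\,N-1}$ satisfies $p\neq 0$ and $p^2=\theta^{\,2N-2}=0$.

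Next I would analyse $(P_v,\varepsilon)$, where $\varepsilon\in\End_\Lambda(P_v)$ is the nilpotent endomorphism given by $p$. As a periodic complex this is $\cdots\xrightarrow{\ \varepsilon\ }P_v\xrightarrow{\ \varepsilon\ }P_v\xrightarrow{\ \varepsilon\ }\cdots$, visibly a periodic complex of finitely generated projectives. To see it is indecomposable, observe that $\End_{\C1(\modules\Lambda)}(P_v,\varepsilon)$ is the centralizer of $p$ inside $\End_\Lambda(P_v)$; this is a unital subalgebra of the finite-dimensional \emph{local} algebra $\End_\Lambda(P_v)$, hence is itself local, its radical being the intersection with $\mathsf{rad}\,\End_\Lambda(P_v)$. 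Thus $(P_v,\varepsilon)$ has local endomorphism algebra and is indecomposable, and it remains a nonzero indecomposable object after passing to $\K1(\modules\Lambda)$, since $1_{P_v}$ is not null-homotopic: a null-homotopy would force $1_{P_v}\in p\End_\Lambda(P_v)+\End_\Lambda(P_v)p\subseteq\mathsf{rad}\,\End_\Lambda(P_v)$.

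Finally, Proposition \ref{existenceofperiodiccomplexsufficient} applied to $(P_v,\varepsilon)$ yields that the embedding $\Delta\colon\Db(\modules\Lambda)/\Sigma\to\D1(\modules\Lambda)$ is not dense, and hence $\Db(\modules\Lambda)/\Sigma$ is strictly smaller than its triangulated hull; the same reasoning works with $\Sigma$ replaced by $\Sigma^n$. The one step I expect to demand genuine care is the existence of a nonzero cyclic path — where the hypothesis of finite global dimension must be used — while everything else is formal.
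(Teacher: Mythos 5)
Your reduction to Proposition \ref{existenceofperiodiccomplexsufficient} is the same first move as the paper, and the parts of your argument about $(P_v,\varepsilon)$ itself (locality of the centralizer of a nilpotent inside the local algebra $\End_\Lambda(P_v)$, non-contractibility because $1_{P_v}\notin p\End_\Lambda(P_v)+\End_\Lambda(P_v)p$) are fine. The genuine gap is exactly the step you flag as the heart of the matter: the claim that finite global dimension together with an oriented cycle in the quiver forces some vertex to carry a \emph{nonzero cyclic path}. You give no proof of this (only a gesture towards resolutions of the simples or the Cartan matrix), and in fact the claim is false. Take the quiver with vertices $1,2,3,4$, arrows $a\colon 1\to 2$, $b\colon 2\to 3$, $c\colon 3\to 4$, $d\colon 4\to 1$, and $\Lambda=\mathbb k Q/(ba,dc)$. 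Then every cyclic path contains $ba$ or $dc$, so $e_v(\mathrm{rad}\,\Lambda)e_v=0$ and $\End_\Lambda(P_v)=\mathbb k$ for every vertex $v$; yet the indecomposable projectives are uniserial with $\mathrm{rad}\,P_2\cong P_3$ and $\mathrm{rad}\,P_4\cong P_1$, giving $\mathrm{gldim}\,\Lambda=2$. This algebra is non-triangular, satisfies all standing hypotheses of the paper, but admits no nonzero nilpotent endomorphism of any indecomposable projective, so your $1$-periodic object $(P_v,\varepsilon)$ simply does not exist there. (Unimodularity of the Cartan matrix cannot save the claim either: the Cartan determinant of this example is $1$.)

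The paper's proof avoids this obstruction: instead of insisting on a $1$-periodic complex supported on a single indecomposable projective, it runs an algorithm along the cycle, repeatedly replacing consecutive arrows by \emph{maximal nonzero compositions}, and terminates with an indecomposable periodic complex of indecomposable projectives whose period may well be larger than $1$ — in the example above it produces the $2$-periodic complex $\cdots\to P_4\xrightarrow{\,ad\,}P_2\xrightarrow{\,cb\,}P_4\to\cdots$. That construction only needs nonzero paths along \emph{segments} of the cycle (which are arrows, hence automatically nonzero to start with), never a nonzero full cycle, which is why it works in general while your approach cannot be repaired without essentially passing to longer periods, i.e.\ reproducing the paper's argument.
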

\begin{proof}
      When $\Lambda$ is non-triangular, its ordinary quiver contains an oriented cycle
      \begin{center}
         \begin{tikzpicture}
            \matrix(a)[matrix of math nodes,
            row sep=2em, column sep=2em,
            text height=1.5ex, text depth=0ex]
            {1 & 2 & 3 & \cdots & l-1 & l.\\};
            \path[->,font=\scriptsize](a-1-1) edge node[above]{$\alpha_1$}(a-1-2)
            (a-1-2) edge node[above]{$\alpha_2$}(a-1-3)
            (a-1-3) edge (a-1-4)
            (a-1-4) edge (a-1-5)
            (a-1-5) edge node[above]{$\alpha_{l-1}$}(a-1-6);
            \path[->,font=\scriptsize](a-1-6) edge [in=315, out=225, looseness = .17] node[below]{$\alpha_l$} (a-1-1);
         \end{tikzpicture}
      \end{center}
      The following algorithm produces an indecomposable periodic complex of finitely generated projective $\Lambda$-modules, which is sufficient by Proposition \ref{existenceofperiodiccomplexsufficient}. Start with the doubly infinite periodic sequence
      \[
      \cdots \to P_l \xrightarrow{\alpha_l} P_1 \xrightarrow{\alpha_1} P_2 \xrightarrow{\alpha_2} P_3 \to \cdots  \to P_{l-1} \xrightarrow{\alpha_{l-1}} P_l \xrightarrow{\alpha_l} P_1 \to \cdots.
      \]
      If this is a complex, the algorithm terminates. If not, insert the maximal non-zero composition
      \[
       a_1 \colon P_{i_1} = P_1 \xrightarrow{\alpha_1} P_2 \to \cdots \to P_{i-1} \xrightarrow{\alpha_{i-1}} P_i = P_{i_2}
      \]
      with the property $\alpha_i a_1 = 0$ into the previous sequence to obtain
      \[
      \cdots \to P_l \xrightarrow{\alpha_l} P_{i_1} \xrightarrow{a_1} P_{i_2} \xrightarrow{\alpha_i} P_{i+1} \to \cdots  \to P_{l-1} \xrightarrow{\alpha_{l-1}} P_l \xrightarrow{\alpha_l} P_1 \to \cdots.
      \]
      If this is a complex, the algorithm terminates. If not, insert the maximal non-zero composition
      \[
       a_2 \colon P_{i_2} \xrightarrow{\alpha_i} P_{i+1} \to \cdots \to P_{j-1} \xrightarrow{\alpha_{j-1}} P_j = P_{i_3}
      \]
      with the property $\alpha_j a_2 = 0$ into the previous sequence to obtain
      \[
      \cdots \to P_l \xrightarrow{\alpha_l} P_{i_1} \xrightarrow{a_1} P_{i_2} \xrightarrow{a_2} P_{i_3} \xrightarrow{\alpha_j} P_{j+1} \to \cdots \to P_{l-1} \xrightarrow{\alpha_{l-1}} P_l \xrightarrow{\alpha_l} P_1 \to \cdots.
      \]
      After finitely many steps this gives a sequence
      \[
      \cdots \to P_{i_1} \xrightarrow{a_1} P_{i_2} \to \cdots  \to P_{i_{s-1}} \xrightarrow{a_{s-1}} P_{i_s} \xrightarrow{\alpha_k} P_{k+1} \to \cdots \to P_l \xrightarrow{\alpha_l} P_{i_1} \to \cdots
      \]
      in which $\alpha_k a_{s-1} = 0$ and $a_s = \alpha_l \cdots \alpha_k \neq 0$ (possibly $a_s = \alpha_l = \alpha_k$). If $a_1 a_s =0$, then
      \[
      \cdots \to P_{i_1} \xrightarrow{a_1} P_{i_2} \to \cdots  \to P_{i_{s-1}} \xrightarrow{a_{s-1}} P_{i_s} \xrightarrow{a_s} P_{i_1} \xrightarrow{a_1} P_{i_2} \to \cdots
      \]
      is a complex and the algorithm terminates. If $a_1 a_s \neq 0$, then the algorithm terminates with the complex
      \[
      \cdots \to P_{i_s} \xrightarrow{a_1 a_s} P_{i_2} \to \cdots \to P_{i_{s-1}} \xrightarrow{a_{s-1}} P_{i_s} \xrightarrow{a_1 a_s} P_{i_2} \to \cdots. \qedhere
      \]
\end{proof}

\begin{remark}
   Again, it is evident that both Proposition \ref{existenceofperiodiccomplexsufficient} and Theorem \ref{orientedcyclesuffices} hold true for any power of $\Sigma$.
\end{remark}

In light of \cite[Theorem 1]{MR2184464}, and in analogy to \cite[Conjecture 1.2]{MR3024264}, the highest hope one could have for the search for more algebras whose orbit categories do not coincide with their triangulated hulls, is the following.

\begin{conjecture}
   For each positive integer $n$, the embedding
   \[
   \Delta \colon \Db(\modules \Lambda)/\Sigma^n \to \Dn(\modules \Lambda)
   \]
   is dense if and only if $\Lambda$ is piecewise hereditary.
\end{conjecture}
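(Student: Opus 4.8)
\emph{The statement is a conjecture, so what follows is a program rather than a complete argument; it splits along the two implications, the first of which is essentially within reach of the techniques of Section~\ref{applications} while the second is the genuine content.}

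\textbf{The direction ``piecewise hereditary $\Rightarrow$ dense''.} Suppose $\Lambda$ is piecewise hereditary, so there is a hereditary abelian category $\mathcal H$ (with a tilting object) and a triangle equivalence $\Db(\modules\Lambda)\cong\Db(\mathcal H)$. The first observation is that the proof of Lemma~\ref{eachisisotostalkwhenhereditary} uses nothing about $\mathcal H$ beyond its being hereditary abelian; hence every object of $\D1(\mathcal H)$ is gradable and the compression $\Delta\colon\Db(\mathcal H)\to\D1(\mathcal H)$ is dense. It then remains to transport density across the equivalence, exactly as in the proof of Proposition~\ref{weakversionofkellersresult}, i.e.\ to obtain the square~(\ref{tensorproductcommuteswithcompressiondiagram}) together with the conclusion of Lemma~\ref{derivedequivalenceinducesperiodicequivalence} for the equivalence at hand. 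When $\mathcal H=\modules H$ this is literally Proposition~\ref{weakversionofkellersresult}; when $\mathcal H$ is the category of coherent sheaves on a weighted projective line one replaces the complex of bimodules by a DG resolution of a tilting object $T$ with $\End(T)=\Lambda$ and reruns the tensor-product formalism of Subsection~\ref{tensorproducts} verbatim. The only work here is bookkeeping, so this direction should be regarded as known modulo writing it out invariantly.

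\textbf{The direction ``dense $\Rightarrow$ piecewise hereditary''.} Here the plan is the contrapositive, producing non-gradable objects, and the reduction is already in place: by Proposition~\ref{existenceofperiodiccomplexsufficient} it suffices to exhibit, for every $\Lambda$ that is \emph{not} piecewise hereditary, a single indecomposable genuinely periodic complex of finitely generated projective $\Lambda$-modules. One must be careful that the periodicity be intrinsic: the differential module $\bigoplus_{i\in\amsbb Z}\Sigma^iX$ attached to a bounded complex $X$ is precisely $\Delta X$, so wrapping a finite complex around itself yields only gradable objects. The tool I would invoke is the Happel--Zacharia characterization: $\Lambda$ is piecewise hereditary if and only if its strong global dimension is finite; a non-piecewise-hereditary $\Lambda$ therefore carries indecomposable objects of $\Kb(\proj\Lambda)$ of arbitrarily large amplitude. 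The proposed strategy is to feed such a sequence $X_1,X_2,\dots$ into a limiting construction producing a periodic complex of projectives --- morally a homotopy band or a two-sided infinite periodic homotopy string --- and to arrange that it be indecomposable, i.e.\ have local endomorphism ring in $\K1(\proj\Lambda)$. As a reality check this succeeds class by class wherever the indecomposable objects of $\Kb(\proj\Lambda)$ are described combinatorially: for gentle (more generally special biserial) algebras the non-piecewise-hereditary ones are precisely those admitting a homotopy band or a periodic infinite homotopy string, each of which is an indecomposable periodic complex of projectives, and Theorem~\ref{orientedcyclesuffices} is the instance in which an oriented cycle supplies such a periodic string directly, via the algorithm in its proof.

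\textbf{The main obstacle} is exactly this last step of the second direction. Amplitude growth of bounded indecomposables does not by itself yield a periodic indecomposable, since it says nothing about the existence of a nonzero self-extension compatible with the differential, and naïve wrap-around constructions destroy either the complex condition or indecomposability. What is really needed is a structural input --- for instance a dichotomy asserting that every algebra of infinite strong global dimension has a component in the Auslander--Reiten quiver of $\Db(\modules\Lambda)$ of a shape (a stable tube, or one of the form $\amsbb{Z}A_\infty/\langle\tau^k\rangle$) that directly furnishes a periodic object, or else a compactness argument extracting a repeating pattern from the tails of the $X_N$. Supplying such an input uniformly, rather than case by case, is what stands between the present methods and a proof of the conjecture.
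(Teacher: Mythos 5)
The statement you are addressing is stated in the paper as an open conjecture; the paper offers no proof, only a discussion of possible lines of attack, and your text correctly treats it as a program rather than an argument. Your program in fact coincides with the paper's own discussion in all essentials. For the direction ``piecewise hereditary $\Rightarrow$ dense'', the paper regards this as known: it follows from \cite[Theorem 1]{MR2184464} combined with Theorem \ref{theoremthehull}, of which Proposition \ref{weakversionofkellersresult} is the self-contained special case for iterated tilted algebras. Your sketch via Lemma \ref{eachisisotostalkwhenhereditary} for a general hereditary abelian category is the natural internal route, but be aware that the transport step is not pure bookkeeping outside the module case: the tensor formalism of Subsection \ref{tensorproducts} and Lemma \ref{derivedequivalenceinducesperiodicequivalence} are set up for complexes of bimodules over algebras, and for $\mathcal H$ a category of coherent sheaves on a weighted projective line one needs an honest substitute (e.g.\ a DG-bimodule realization of the tilting equivalence) before density can be carried from $\D1(\mathcal H)$ to $\D1(\modules \Lambda)$; alternatively one simply cites Keller, as the paper does.

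For the hard direction, your reduction is exactly the paper's: by Proposition \ref{existenceofperiodiccomplexsufficient} it suffices to produce, for every non-piecewise-hereditary $\Lambda$, an indecomposable periodic complex of finitely generated projectives, and the natural entry point is the Happel--Zacharia characterization via strong global dimension \cite{MR2413349}. You also correctly identify the genuine obstruction, namely that unbounded amplitude of indecomposables in $\Kb(\proj \Lambda)$ does not by itself yield a periodic indecomposable. Where you differ from the paper is only in the speculative structural input: you propose Auslander--Reiten-theoretic shapes (tubes, homotopy bands for gentle algebras), whereas the paper suggests splicing minimal projective resolutions along a cycle of non-zero extensions between simple modules, in analogy with the Buchweitz--Flenner iterated Koszul complex \cite{MR3272099}, as illustrated by the algorithm in the proof of Theorem \ref{orientedcyclesuffices} and the examples following the conjecture. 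Neither route closes the gap, so your assessment of the state of the problem is accurate; just do not present either direction as established beyond what is cited.
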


Recall that the \textit{strong global dimension} of $\Lambda$, a notion first proposed by Ringel, is the supremum of the widths of the indecomposable objects in $\Kb(\proj \Lambda)$. By the work of Happel and Zacharia in \cite{MR2413349}, $\Lambda$ is piecewise hereditary if and only if it has finite strong global dimension. One approach to the conjecture might be to look for a more explicit argument showing that arbitrarily wide indecomposable objects exist in $\Kb(\proj \Lambda)$ whenever $\Lambda$ is not piecewise hereditary. Indeed, if one can show that one of these indecomposable complexes is even periodic, then the conjecture will be settled by Proposition \ref{existenceofperiodiccomplexsufficient}.

In a sense, this has already been done in the commutative setting. That is, in \cite{MR3272099} Buchweitz and Flenner classify the commutative noetherian rings of finite strong global dimension. As finiteness of the strong global dimension is a local property, the characterization is furnished by the construction of an `iterated Koszul complex'. Explicitly, if $(R, \mathbf m)$ is local and admits a regular sequence of length $l \geq 2$, then the associated Koszul complex
\[
K = 0 \to K^0 \to K^1 \to \cdots \to K^{l-1} \to K^l \to 0
\]
can be `spliced' with $\Sigma^l K$ by taking the cone of the chain map $K \to \Sigma^l K$ whose only non-zero component is an isomorphism $K^0 \to K^l$. Factoring out an acyclic subcomplex isomorphic to $0 \to K^0 \to K^l \to 0$ yields an indecomposable complex, and it is clear how to repeat the procedure in both directions to obtain periodicity.

Secretly, a modification of the iterated Koszul complex appeared already in the algorithm in the proof of Theorem \ref{orientedcyclesuffices}. Let us explain how this works, as a rewriting of the process in homological terms will indicate that a characterization of piecewise hereditary algebras by minimal projective resolutions of simples may prove helpful in settling our conjecture. Also, the reader might find the rephrased algorithm to be more easily applied in certain examples. So assume that the algebra $\Lambda$ admits a simple module $S$ with $\Ext_{\Lambda}^l(S,S) \neq 0$. Then a chain map $f\colon P \to \Sigma^l P$ which is not null-homotopic must exist, where $P$ is a minimal projective resolution of $S$. Note that this is precisely what happens with the simple $R$-module $R/\mathbf m$ above, whose minimal projective resolution is nothing but the Koszul complex $K$. Factoring out the only acyclic subcomplex of the cone $C_f$ results in some complex $C$, and a decomposition $C = C_1 \oplus C_2$ will induce a decomposition in homology, which is simply $\Sigma S \oplus \Sigma^l S$. Hence we may assume that $H(C_1) = \Sigma S$ and $H(C_2) = \Sigma^l S$, which implies $C_1 = \Sigma P$ and $C_2 = \Sigma^l P$, since $P$ is indecomposable. This is a contradiction, since a splitting of the canonical exact sequence
\[
0 \to \Sigma^l P \to C_f \to \Sigma P \to 0
\]
of complexes implies the vanishing of $f$ in the homotopy category. Hence $C$ is indecomposable, and iterating results in periodicity. More generally, a cycle of non-zero extensions between simples allows a similarly flavored combination of minimal projective resolutions, yielding an indecomposable periodic complex of projectives.

\begin{example}
   Let $G$ be the quiver
   \begin{center}
      \begin{tikzpicture}
         \matrix(a)[matrix of math nodes,
         row sep=2em, column sep=2em,
         text height=1.5ex, text depth=0ex]
         {1 & 2 & 3 & 4. \\};
         \path[->,font=\scriptsize](a-1-1) edge node[above]{$\alpha$}(a-1-2)
         (a-1-2) edge node[above]{$\beta$}(a-1-3)
         (a-1-3) edge node[above]{$\gamma$}(a-1-4);
         \path[->,font=\scriptsize](a-1-4) edge [in=315, out=225, looseness = .22] node[below]{$\delta$} (a-1-1);
      \end{tikzpicture}
   \end{center}
   If $\Lambda = \mathbb k G/(\beta \alpha, \delta \gamma)$, then $\Ext_{\Lambda}^2(S_1, S_3) \neq 0 \neq \Ext_{\Lambda}^2(S_3, S_1)$, corresponding to
   \begin{center}
      \begin{tikzpicture}[baseline=(current bounding box.center)]
         \matrix(a)[matrix of math nodes,
         row sep=1.5em, column sep=1.5em,
         text height=1.5ex, text depth=0.25ex]
         { & &  &  & 0 & P_3 & P_4 & P_1 & 0 \\
          &  & 0 & P_1 & P_2 & P_3 & 0 &  &  \\
         0 & P_3 & P_4 & P_1 & 0. &  &  &  &  \\};
         \path[->,font=\scriptsize](a-1-5) edge (a-1-6)
         (a-1-6) edge node[above]{$\gamma$}(a-1-7)
         (a-1-7) edge node[above]{$\delta$}(a-1-8)
         (a-1-8) edge (a-1-9)

         (a-2-3) edge (a-2-4)
         (a-2-4) edge node[above]{$\alpha$}(a-2-5)
         (a-2-5) edge node [above]{$\beta$}(a-2-6)
         (a-2-6) edge (a-2-7)

         (a-3-1) edge (a-3-2)
         (a-3-2) edge node[above]{$\gamma$}(a-3-3)
         (a-3-3) edge node[above]{$\delta$}(a-3-4)
         (a-3-4) edge (a-3-5)

         (a-1-6) edge node[right]{$1_{P_3}$}(a-2-6)
         (a-2-4) edge node[right]{$1_{P_1}$}(a-3-4);
      \end{tikzpicture}
   \end{center}
   Taking cones and factoring out acyclic subcomplexes yields
   \[
   0 \to P_3 \xrightarrow{\gamma} P_4 \xrightarrow{\alpha \delta} P_2 \xrightarrow{\gamma \beta} P_4 \xrightarrow{\delta} P_1 \to 0,
   \]
   revealing the $2$-periodic
   \[
   \cdots \to P_2 \xrightarrow{\gamma \beta} P_4 \xrightarrow{\alpha \delta} P_2 \xrightarrow{\gamma \beta} P_4 \to \cdots.
   \]
   In like manner, over the algebra $\mathbb k G /(\gamma \beta \alpha)$, there are chain maps
   \begin{center}
      \begin{tikzpicture}[baseline=(current bounding box.center)]
         \matrix(a)[matrix of math nodes,
         row sep=1.5em, column sep=1.5em,
         text height=1.5ex, text depth=0.25ex]
         { &  &  & 0 & P_1 & P_3 & P_4 & 0 \\
          &  & 0 & P_4 & P_1 & 0 &  &  \\
          0 & P_1 & P_3 & P_4 & 0. &  &  &  \\};
         \path[->,font=\scriptsize](a-1-4) edge (a-1-5)
         (a-1-5) edge node[above]{$\beta \alpha$}(a-1-6)
         (a-1-6) edge node[above]{$\gamma$}(a-1-7)
         (a-1-7) edge (a-1-8)

         (a-2-3) edge (a-2-4)
         (a-2-4) edge node[above]{$\delta$}(a-2-5)
         (a-2-5) edge (a-2-6)

         (a-3-1) edge (a-3-2)
         (a-3-2) edge node[above]{$\beta \alpha$}(a-3-3)
         (a-3-3) edge node[above]{$\gamma$}(a-3-4)
         (a-3-4) edge (a-3-5)

         (a-1-5) edge node[right]{$1_{P_1}$}(a-2-5)
         (a-2-4) edge node[right]{$1_{P_4}$}(a-3-4);
      \end{tikzpicture}
   \end{center}
   between minimal projective resolutions of simples. Taking cones and factoring out acyclic subcomplexes yields
   \[
   0 \to P_1 \xrightarrow{\beta \alpha} P_3 \xrightarrow{\beta \alpha \delta \gamma} P_3 \xrightarrow{\gamma} P_4 \to 0,
   \]
   hence the $1$-periodic
   \[
   \cdots \to P_3 \xrightarrow{\beta \alpha \delta \gamma} P_3 \xrightarrow{\beta \alpha \delta \gamma} P_3 \to \cdots.
   \]
\end{example}

\subsection{Functors induced on orbit categories} \label{functorsinducedonorbitcategories}

Given a category $\T$ with an automorphism $\F$, one might expect that $\F$ induces the identity functor on the orbit category $\T\!/\F$. Famously, this does happen in the cluster category $\Db(\modules \Lambda)/\amsbb S \circ \Sigma^{-2}$, and hence the mantra `the cluster category is $2$-Calabi-Yau'. Interestingly enough however, there are examples that do not conform to this intuition. As a matter of fact, our investigations reveal that $\Sigma$ is not necessarily isomorphic to the identity on $\Db(\modules \Lambda)/\Sigma$. Let us elaborate, under the necessary assumption $\characteristic (\mathbb k) \neq 2$.

First of all, note that the condition $\Sigma \cong \id$ on a triangulated category is a highly restrictive one. In fact, as was pointed out to the author by Steffen Oppermann, it essentially implies semisimplicity. Hence we should not expect to find such an isomorphism of functors on $\D1(\modules \Lambda) \cong \K1(\proj \Lambda)$. And indeed, recalling that the shift functor changes the signs of differentials, it is easy to produce an algebra $\Lambda$ and an object in $\K1(\proj \Lambda)$ that does not admit an isomorphism to its shift, never mind one that commutes with induced maps. Of course, one might still suspect that $\Sigma$ coincides with the identity functor on the orbit category $\Db(\modules \Lambda)/\Sigma$ itself, i.e.\ on the subcategory of gradables in $\K1(\proj \Lambda)$. And admittedly, each gradable does allow an isomorphism to its shift (even worse, there are often many). However, objectwise isomorphisms $\Sigma P \cong P$ cannot constitute a natural isomorphism $\Sigma \cong \id$ of functors on $\Db(\modules \Lambda)/\Sigma$ in general. For example, consider the path algebra of
\begin{center}
   \begin{tikzpicture}
      \matrix(a)[matrix of math nodes,
      row sep=2em, column sep=2em,
      text height=1.5ex, text depth=0ex]
      {1 & 2 & 3 \\};
      \path[->,font=\scriptsize](a-1-1) edge node[above]{$\alpha$}(a-1-2)
      (a-1-2) edge node[above]{$\beta$}(a-1-3);
      \path[->,font=\scriptsize](a-1-1) edge [in=225, out=315, looseness = .3] node[below]{$\gamma$} (a-1-3);
   \end{tikzpicture}
\end{center}
modulo to ideal generated by $\beta \alpha$, and the gradable differential modules
   \[
   Q_1 = \bigl(P_1, 0\bigr) \enspace \text{and} \enspace Q_2 = \Biggl(P_2 \oplus P_3, \biggl(\begin{matrix} 0 & 0 \\ \beta & 0 \end{matrix} \biggr) \Biggr).
   \]
If $\Sigma \cong \id$ as functors on the orbit category, then the morphism
   \[
   f \colon Q_1 \xrightarrow{\bigl(\begin{smallmatrix} \alpha \\ \gamma \end{smallmatrix}\bigr)} Q_2
   \]
   fits in a commutative square
   \begin{equation} \label{naturalsquare}
      \begin{tikzpicture}[baseline=(current bounding box.center)]
      \matrix(a)[matrix of math nodes,
      row sep=2.5em, column sep=3em,
      text height=1.5ex, text depth=0.25ex]
      { Q_1 & Q_2 \\
       \Sigma Q_1 & \Sigma Q_2 \\};
       \path[->,font=\scriptsize](a-1-1) edge (a-2-1)
       (a-1-2) edge (a-2-2)
       (a-1-1) edge node[above]{$f$}(a-1-2)
       (a-2-1) edge node[above]{$\Sigma f = f$}(a-2-2);
      \end{tikzpicture}
   \end{equation}
   whose vertical arrows are isomorphisms. As this occurs in a triangulated category, an isomorphism of cones $C_f \to C_{\Sigma f}$ is induced, i.e.\
   \[
   \Biggl(P_1 \oplus P_2 \oplus P_3, \begin{pmatrix}0 & 0 & 0 \\ \alpha & 0 & 0 \\ \gamma & \beta & 0 \end{pmatrix}  \Biggr) \cong \Biggl(P_1 \oplus P_2 \oplus P_3, \begin{pmatrix}0 & 0 & 0 \\ \alpha & 0 & 0 \\ \gamma & -\beta & 0 \end{pmatrix}  \Biggr).
   \]
   However, a straightforward calculation shows that the latter isomorphism does not exist. One can also rebut the existence of (\ref{naturalsquare}) with no mention of the axioms of triangulated categories, using only the facts that an isomorphism $Q_1 \to \Sigma Q_1$ is multiplication by $\lambda e_1$ with $\lambda \in \mathbb k^{\ast}$ and that each isomorphism $Q_2 \to \Sigma Q_2$ is some
   \[
   \begin{pmatrix} \lambda' e_2 & 0 \\ \lambda'' \beta & -\lambda' e_3 \end{pmatrix}
   \]
   with $\lambda' \in \mathbb k^{\ast}$ and $\lambda'' \in \mathbb k$.

\begin{remark}
   Similarly, as long as $n$ is odd, $\Sigma^n$ does not in general coincide with the identity functor on $\Db(\modules \Lambda)/\Sigma^n$. In light of this, one could (very naively) worry that the $n$-Calabi-Yau property of the \textit{$n$-cluster category} $\Db(\modules \Lambda)/\amsbb S \circ \Sigma^{-n}$, introduced in \cite{MR2184464}, might in fact fail unless $n$ is even. In this case, however, Keller shows in  \cite{keller2008corrections} that the signs do add up to ensure $\amsbb S \cong \Sigma^n$.
\end{remark}

This behavior can be traced back to the formal DG algebra $\Gamma = \Lambda[t, t^{-1}]$ with $|t|=1$, employed in Section \ref{sectionthehull}. Recall that if $M$ is a right DG module over a DG algebra $\dgA$, then the right DG $\dgA$-module $\Sigma M$ is the shifted complex with $\dgA$-action
\[
m \ast a = ma.
\]
On the other hand, if $N$ is a left DG $\dgA$-module, then the shifted complex $\Sigma N$ is a left DG $\dgA$-module with action given by
\[
a \ast n = (-1)^{|a|}an.
\]

\begin{lemma} \label{notisoasbimodules}
   $\Gamma$ and $\Sigma \Gamma$ are isomorphic as right and as left DG $\Gamma$-modules, but not as DG $\Gamma$-bimodules. In particular $\Sigma \cong - \otimes_{\Gamma} \Sigma \Gamma$ is not isomorphic to the identity functor on the category of right DG $\Gamma$-modules.
\end{lemma}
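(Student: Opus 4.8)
The plan is to treat the three claims in turn. First recall that $\Gamma = \Lambda[t,t^{-1}]$ is formal, so $d_\Gamma = 0$, that $t$ is a central invertible element of degree $1$, and that $\Gamma^0 = \Lambda$, hence $\Gamma^i = \Lambda t^i$ for all $i$. Because the differentials vanish, every degree-$0$ additive map between shifts of $\Gamma$ is automatically a chain map, so in each part the only thing to check is compatibility with the module actions. The relevant asymmetry, recalled just before the statement, is that the shift leaves a right action unchanged, $m \ast a = ma$, but twists a left action by a sign, $a \ast n = (-1)^{|a|}an$; this single sign drives everything.

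For the right-module isomorphism I would take $\psi\colon\Gamma\to\Sigma\Gamma$ to be left multiplication by $t$, i.e.\ $\psi(x) = tx$: since $|t|=1$ this sends $\Gamma^i$ into $(\Sigma\Gamma)^i = \Gamma^{i+1}$, it is right $\Gamma$-linear because $\psi(xa) = (tx)a = \psi(x)\ast a$, and it is bijective because $t$ is a unit. For the left-module isomorphism I would take instead $\varphi\colon\Gamma\to\Sigma\Gamma$, $\varphi(x) = (-1)^{|x|}xt$, which is again a degree-$0$ bijection and whose sign is exactly what produces left linearity: $\varphi(ax) = (-1)^{|a|+|x|}axt = (-1)^{|a|}a\varphi(x) = a\ast\varphi(x)$.

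The core of the argument is to show that the only morphism of DG $\Gamma$-bimodules $\Gamma\to\Sigma\Gamma$ is zero, which a fortiori excludes an isomorphism. Such a $\phi$ is in particular a left-module morphism out of the free left module $\Gamma$, hence determined by $u = \phi(1)\in(\Sigma\Gamma)^0 = \Gamma^1$ via $\phi(a) = a\ast u = (-1)^{|a|}au$. Imposing right $\Gamma$-linearity already at $a=1$ gives $(-1)^{|b|}bu = \phi(b) = \phi(1)\ast b = ub$ for every $b$; taking $b=t$ yields $ut = -tu$, whereas centrality of $t$ gives $ut = tu$, so $2tu = 0$. Since $\characteristic\mathbb k\neq 2$ and $t$ is invertible, $u = 0$ and hence $\phi = 0$; in particular $\Gamma\not\cong\Sigma\Gamma$ as DG $\Gamma$-bimodules.

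Finally, for the functor statement I would argue that a natural isomorphism $\id\cong\Sigma$, i.e.\ $\id\cong -\otimes_\Gamma\Sigma\Gamma$, together with $\id\cong -\otimes_\Gamma\Gamma$, yields a natural isomorphism $-\otimes_\Gamma\Gamma\cong -\otimes_\Gamma\Sigma\Gamma$ of (DG) functors on right DG $\Gamma$-modules; evaluating at the regular module $\Gamma$ gives an isomorphism of right DG $\Gamma$-modules $\Gamma\cong\Sigma\Gamma$, and naturality against the left multiplications $\Gamma\to\Gamma$ promotes it to a DG-bimodule isomorphism, contradicting the previous paragraph. I expect the main obstacle to be essentially notational: pinning down consistent sign conventions for the shift of a left module, of a right module, and of a morphism, and making this Eilenberg--Watts-type reduction precise in the differential graded setting. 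With that in place, the whole obstruction is concentrated in the lone sign coming from $|t| = 1$ and $\characteristic\mathbb k\neq2$.
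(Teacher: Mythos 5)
Your treatment of the main claim is correct and is essentially the paper's own argument: the paper compares the components of right- and left-module homomorphisms $\Gamma \to \Sigma\Gamma$ (constant in degree versus alternating in sign), which is exactly your relation $ub=(-1)^{|b|}bu$ specialised to $b=t$, and both arguments rest on the untwisted right action versus the sign-twisted left action on the shift together with $\characteristic (\mathbb k)\neq 2$ and invertibility of $t$. Your explicit isomorphisms $\psi$ and $\varphi$ just fill in what the paper dismisses with ``clearly''.

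One caution about your final paragraph. Left multiplication by $t$ has degree $1$, so it is not a morphism in the ordinary category $Z^0(\dgmodules \Gamma)$ of right DG $\Gamma$-modules, and plain naturality of a purported isomorphism $\id \cong \Sigma$ evaluated at $\Gamma$ only yields left $\Lambda$-linearity of $\eta_\Gamma$ (indeed $\eta_\Gamma$ can perfectly well be left multiplication by $t$, which is right $\Gamma$-linear and left $\Lambda$-linear, so no contradiction arises from the object $\Gamma$ and its degree-zero endomorphisms alone). To make your Eilenberg--Watts reduction work you must either interpret $\eta$ as a DG-natural transformation, so that naturality against the closed degree-one morphism $\lambda_t$ (with its Koszul sign) is available and reproduces your bimodule computation, or test plain naturality at a different object, for instance the cone of $\id_\Gamma$, which under $\mathcal C\Gamma \simeq \C1(\Modules \Lambda)$ is the free $\Lambda[\varepsilon]$-module: its differential is a degree-zero endomorphism with which $\eta$ must both commute (by naturality, since $\Sigma$ acts trivially on morphisms) and anticommute (being a map to the shifted object), forcing it to vanish. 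The paper leaves this ``in particular'' step implicit, so this is a repair of a detail the paper does not spell out rather than a divergence from its proof.
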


\begin{proof}
   Let $f \colon \Gamma \to \Sigma \Gamma$ be a homomorphism of right DG $\Gamma$-modules, i.e.\ $f =(f_i)$ where $f_i \in \End_{\Lambda}(\Lambda)$ for each integer $i$ such that
   \[
   f_{|\gamma \gamma'|}(\gamma \gamma') = f_{|\gamma|}(\gamma) \ast \gamma' = f_{|\gamma|}(\gamma) \gamma'
   \]
   for homogeneous $\gamma, \gamma' \in \Gamma$. In particular picking $\gamma' =1_{\Lambda} \in \Gamma_1$ yields $f_{|\gamma|+1} = f_{|\gamma|}$,
   i.e.\ $f$ must be given by the same homomorphism in each degree. Similarly, if $g = (g_i)\colon \Gamma \to \Sigma \Gamma$ is a homomorphism of left DG $\Gamma$-modules, then
   \[
   g_{|\gamma' \gamma|}(\gamma' \gamma) = \gamma' \ast g_{|\gamma|}(\gamma) = (-1)^{|\gamma'|}\gamma' g_{|\gamma|}(\gamma),
   \]
   and picking $\gamma' = 1_\Lambda \in \Gamma_1$ yields $g_{|\gamma|+1} = - g_{|\gamma|}$. Clearly, $\Gamma \cong \Sigma \Gamma$ on each side, but the different signs required by $f$ and $g$ imply that no isomorphism of DG $\Gamma$-bimodules can exist.
\end{proof}

\bibliographystyle{amsplain}
\bibliography{main}
\end{document}